\definecolor{dmagenta}{rgb}{.4,.1,.5}
\definecolor{dblue}{rgb}{.0,.0,.5}
\definecolor{mblue}{rgb}{.0,.0,.8}
\definecolor{ddblue}{rgb}{.0,.0,.4}
\definecolor{dred}{rgb}{.6,.0,.0}
\definecolor{dgreen}{rgb}{.0,.5,.0}
\definecolor{Eeom}{rgb}{.0,.0,.5}
\newtheorem{lemma}{Lemma}[section]
\newtheorem{theorem}{Theorem}[section]
\newtheorem{corollary}{Corollary}[section]
\theoremstyle{definition}
\newtheorem{definition}{Definition}[section]
\numberwithin{equation}{section}
\crefname{section}{Section}{Sections}
\crefname{subsection}{Subsection}{Subsections}
\crefname{condition}{Condition}{Conditions}
\crefname{hypothesis}{Hypothesis}{Conditions}
\crefname{assumption}{Assumption}{Assumptions}
\crefname{lemma}{Lemma}{Lemmas}
\crefname{claim}{Claim}{Claims}
\Crefname{figure}{Figure}{Figures}
\DeclareRobustCommand\widecheck[1]{{\mathpalette\@widecheck{#1}}}
\def\@widecheck#1#2{%
    \setbox\z@\hbox{\m@th$#1#2$}%
    \setbox\tw@\hbox{\m@th$#1%
       \widehat{%
          \vrule\@width\z@\@height\ht\z@
          \vrule\@height\z@\@width\wd\z@}$}%
    \dp\tw@-\ht\z@
    \@tempdima\ht\z@ \advance\@tempdima2\ht\tw@ \divide\@tempdima\thr@@
    \setbox\tw@\hbox{%
       \raise\@tempdima\hbox{\scalebox{1}[-1]{\lower\@tempdima\box
\tw@}}}%
    {\ooalign{\box\tw@ \cr \box\z@}}}
\def\subsection{\@startsection{subsection}{0}%
\z@{\linespacing\@plus\linespacing}{\linespacing}%
{\bf}}
\newcommand{\df}{\coloneqq}
\DeclareMathOperator{\Exp}{\mathbb{E}} %Expectation
\DeclareMathOperator{\Prob}{\mathbb{P}} %Probability
\newcommand{\D}{\mathrm{d}}          %differential
\newcommand{\E}{\mathrm{e}}          %exponent
\newcommand{\RR}{\mathbb{R}}         %Real numbers
\newcommand{\Rd}{{\mathbb{R}^d}}       %R^d
\newcommand{\NN}{\mathbb{N}}         %Natural numbers
\newcommand{\ZZ}{\mathbb{Z}}         %Integers
\newcommand{\Ind}{\mathds{1}}            %indicator function
\newcommand{\grad}{\nabla}
\newcommand{\cA}{\mathcal{A}}
\newcommand{\cC}{\mathcal{C}}     % Classes of continuous functions
\newcommand{\cG}{\mathcal{G}}
\newcommand{\cB}{\mathcal{B}}
\newcommand{\cK}{\mathcal{K}}
\newcommand{\cP}{\mathcal{P}}     % Probability Measures
\newcommand{\sR}{\mathscr{R}}
\newcommand{\cT}{\mathcal{T}}
\newcommand{\abs}[1]{\lvert#1\rvert}
\newcommand{\norm}[1]{\lVert#1\rVert}
\DeclareMathOperator{\diam}{diam}
\DeclareMathOperator{\trace}{trace}
\DeclareMathOperator{\dist}{dist}
\begin{document}

\title[Mixed local-nonlocal operators]%
{\sc \textbf{Mixed local-nonlocal operators: maximum principles,
eigenvalue problems and their applications}}

\author{Anup Biswas and Mitesh Modasiya}

\address{
Department of Mathematics, Indian Institute of Science Education and Research, Dr. Homi Bhabha Road,
Pune 411008, India. Email: anup@iiserpune.ac.in; mitesh.modasiya@students.iiserpune.ac.in }

\date{}

%%%%%%%%%%%%%%%%%%%%%%%%%%%%%%%%%%%%%%%%%%%%%%%%%%%%%%%%%%%%%%%%%%%%%%%%%%%%%%%%

\begin{abstract}
In this article we consider a class of non-degenerate elliptic operators 
obtained by superpositioning the Laplacian and a general nonlocal operator.
We study the existence-uniqueness results for Dirichlet boundary
value problems, maximum principles and generalized eigenvalue problems.
As applications to these results, we obtain Faber-Krahn inequality and
a one-dimensional symmetry result related to the Gibbons' conjecture.
The latter results substantially extend the recent results of 
Biagi et.\ al. \cite{BVDV,BDVV} who consider the operators of the form
$-\Delta + (-\Delta)^s$ with $s\in (0, 1)$.
\end{abstract}
\keywords{Operators of mixed order, moving plane, semilinear equation,
 principal eigenvalue, Hopf's lemma, shape optimization.}
\subjclass[2010]{Primary: 35B50, 35A01, 35R11 Secondary: 47A75 }

\maketitle

\section{\bf Introduction}
In this article we consider an operator in $\Rd$ which is a combination of a local and 
a nonlocal operator. In particular, 
we consider operators of the form
\begin{equation}\label{Opt}
L u = \Delta u + \int_{\Rd} (u(x+y)-u(x)-\Ind_{\{|y|\leq 1\}} y\cdot \grad u(x)) j(y) \D{y},
\end{equation}
where $j:\Rd\setminus\{0\}\to [0, \infty)$ is a jump kernel satisfying 
\begin{equation}\label{kernel}
\int_{\Rd} (1\wedge |y|^2) j(y) \D{y}<\infty.
\end{equation}
Operators of the form \eqref{Opt} appears
naturally in the study of L\'{e}vy process. More precisely, the generator 
of a $d$-dimensional L\'evy process is given by the following general
structure
$$\cA u = \trace (a D^2u) + b\cdot \grad u + \int_{\Rd} (u(x+y)-u(x)-\Ind_{\{|y|\leq 1\}} y\cdot \grad u(x))  \nu(\D{y}),$$
where $a$ is a non-negative definite matrix, $b\in\Rd$ and $\nu$ is a L\'evy
measure satisfying 
$$\int_{\Rd} (1\wedge \abs{y}^2)\, \nu(\D{y})<\infty.$$
 The local elliptic operator corresponds to $\nu=0$. For 
$\nu(\D{y}) =|y|^{-d-2s} \D{y}$, the nonlocal part corresponds to the 
well-studied fractional Laplacian. In this article we
set $a=I, b=0$ and $\nu(\D{y})=j(y)\D{y}$ where $j$
satisfies \eqref{kernel}.
Let us denote by 
\begin{equation}\label{LK}
\psi(z)=\int_{\Rd}(1-e^{i z\cdot\xi}+\Ind_{\{|z|\leq 1\}} iz\cdot \xi)\, j(\xi)\D{\xi}.
\end{equation}
Let $Y$ be a pure-jump L\'{e}vy process with L\'evy-Khinchine exponent 
given by $\psi$ and $B$ be a Brownian motion, independent of $Y$, 
running twice as fast as the standard $d$-dimensional Brownian motion.
Let $X=B+Y$. We also assume that all the processes are defined
on a complete probability space $(\Omega, \mathcal{F}, \Prob)$.
It is well-known that $X$ is a strong Markov process and the semigroup
generated by $X$ is determined by the generator \eqref{Opt}. Furthermore,
the L\'evy-Khinchine representation of $X$ is given by
$$\Exp[e^{iz\cdot X_t}]=e^{-t(|z|^2+\psi(z))}\quad \text{for all}\; z\in\Rd\quad \text{and} \; t>0\,,$$
where $\Exp[\cdot]$ denotes the expectation with respect to the measure
$\Prob$.
For more details on this topic we refer to the book of Sato \cite{Sato}.
We impose the following assumption on $\psi$.
\begin{enumerate}
\item[(\hypertarget{A1}{\bf A1})]For some constant $C>0$ we have 
$|{\rm Im} (\psi(p))|\leq C (|p|^2 + |{\rm Re} (\psi(p))|)$ for all $p$ and
for all $r>0$ we have $\sup_{|p|\leq r} (|p|^2 + {\rm Re}(\psi(p)))>0$.
\end{enumerate}
It is easy to see that for $j$ symmetric (that is, $j(y)=j(-y)$) we have
$$\psi(z)=\int_{\Rd}(1-\cos(z\cdot \xi)) j(\xi)\D{\xi}\geq 0,$$
and thus, \hyperlink{A1}{(A1)} holds.

In this article we are concerned with equations of the form
\begin{equation}\label{Eq1}
L u =f(u, x)\quad \text{in}\; D, \quad \text{and}\quad 
u = g\quad \text{in}\; D^c.
\end{equation}
Integro-differential operators such as \eqref{Eq1} became quite popular
very recently. There is a large body of works dealing with elliptic operators with both local and nonlocal parts. But most of the works
restricted the nonlocal term to be the factional Laplacian 
\cite{AC20,Barles12,Barles08,BDVV,BDVV-a,BVDV,CVW,dEJ,RS15,BMS22}. However, 
there are many practical situations;
for instance, in biology \cite{CPSZ,N12}, mathematical finance 
\cite{OS07,BJK},
where the L\'evy measure need not be of 
fractional Laplacian type.  This gives us a motivation to consider an
integro-differential equation with a general L\'evy measure. By a solution we shall always mean a viscosity solution in the sense of Caffarelli and Silvestre \cite{CS09}.

\begin{definition}[Viscosity solution] \label{Defi1.1}
A bounded function $u:\Rd\to \RR$, upper (lower) semi-continuous in a domain $\bar{D}$, is said to be a viscosity subsolution (supersolution) to
$$L u = f\quad \text{in}\; D,$$
written as $Lu\geq f$ ($Lu\leq f$), if for any point $x\in D$ and a neighbourhood $N_x$ of $x$ in $D$, there exists a function $\varphi\in C^2(\overline{N_x})$ so that
$\varphi-u$ attains minimum (maximum) $0$ in $N_x$ at the point $x$, then letting
\[
v(y):=\left\{
\begin{array}{ll}
\varphi(y) & \text{for}\; y\in N_x,
\\[2mm]
u(y) & \text{otherwise},
\end{array}
\right.
\]
we have $L v(x)\geq f(x)$ ($L v(x)\leq f$, respectively). We say $u$ is a viscosity solution if it
is both sub and supersolution.
\end{definition}
We can also restrict ourselves to the test functions attaining strict
minimum or maximum in the definition above.
Our first result establishes the existence and uniqueness of solution.\begin{theorem}\label{T1.1}
Assume \hyperlink{A1}{(A1)}.
Let $D$ be an open, bounded Lipschitz domain in $\Rd$. Also, assume that 
$f\in\cC(\bar{D})$ and $g\in\cC_b(D^c)$. Then there exists a 
unique viscosity
solution $u\in\cC_b(\Rd)$ to
\begin{equation}\label{E1.5}
Lu = -f \quad \text{in}\; D, \quad \text{and}\quad 
u=g \quad \text{in}\; D^c.
\end{equation}
Furthermore, the unique solution can be written as
\begin{equation}\label{E1.6}
u(x)=\Exp_x\left[\int_0^{\uptau} f(X_t)\, \D{t}\right] + \Exp_x[g(X_\uptau)], \qquad x\in D,
\end{equation}
where $\uptau=\uptau_D$ denotes the first
 exit time of $X$ from $D$, that is,
$$\uptau_D=\inf\{t>0\; :\; X_t\notin D\}.$$
\end{theorem}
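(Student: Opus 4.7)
The plan is to take the Feynman--Kac candidate
\[
u(x) \df \Exp_x\Big[\int_0^{\uptau} f(X_t)\, \D t\Big] + \Exp_x[g(X_\uptau)], \qquad x \in D,
\]
extended by $g$ on $D^c$, as the definition, and then to show successively that $u\in\cC_b(\Rd)$, that $u$ is a viscosity solution of \eqref{E1.5}, and that viscosity solutions are unique. Throughout I would exploit that $X = B+Y$ is a Feller strong Markov process whose generator is $L$ and whose Brownian component $B$ is \emph{nondegenerate}.

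The first and most technical step is $u\in\cC_b(\Rd)$. For boundedness I would show $\sup_{x\in D}\Exp_x[\uptau]<\infty$ by a Lyapunov/supersolution argument: build a bounded smooth $W\ge 0$ with $LW\le -1$ on $D$ (for example by suitably truncating $-|x|^2/(2d)$, whose Laplacian equals $-1$ and whose nonlocal contribution is finite thanks to the cutoff and \eqref{kernel}), so that Dynkin's formula yields $\Exp_x[\uptau]\le W(x)$. Continuity of $u$ in the interior of $D$ is a direct consequence of the strong Markov property plus dominated convergence. The delicate point is continuity at $\partial D$: for $x_n\to x_0\in\partial D$ one needs $\uptau\to 0$ and $X_\uptau\to x_0$ in probability. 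Since $B$ is a nondegenerate Brownian motion every Lipschitz boundary point is regular for $B$ by the classical exterior-cone criterion; to transfer this to the mixed process $X$ I would decompose $Y$ into small ($|y|\le\epsilon$) and large ($|y|>\epsilon$) jumps using \eqref{kernel}, so that on $[0,\delta]$ with high probability no large jump fires and the small-jump martingale stays below $\epsilon$, whence an exit of $B$ from the $\epsilon$-inner retraction of $D$ forces an exit of $X$ from $D$.

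To prove $u$ is a viscosity solution, fix $x_0\in D$ and let $\phi\in\cC^2$ touch $u$ from above at $x_0$ on $B_r(x_0)\subset D$. Form the global test $\tilde\phi\df\phi\,\Ind_{B_r(x_0)}+u\,\Ind_{B_r(x_0)^c}$, so that $\tilde\phi\ge u$ everywhere with equality at $x_0$ and $L\tilde\phi(x_0)$ is well defined thanks to \eqref{kernel} and boundedness of $u$. The strong Markov property at $\uptau_r\df\uptau_{B_r(x_0)}$ together with $u\le\tilde\phi$ gives
\[
u(x_0)=\Exp_{x_0}\Big[\int_0^{\uptau_r}f(X_t)\,\D t\Big]+\Exp_{x_0}[u(X_{\uptau_r})]\le\Exp_{x_0}\Big[\int_0^{\uptau_r}f(X_t)\,\D t\Big]+\Exp_{x_0}[\tilde\phi(X_{\uptau_r})],
\]
while Dynkin's formula applied to $\tilde\phi$ produces $\Exp_{x_0}[\tilde\phi(X_{\uptau_r})]-\tilde\phi(x_0)=\Exp_{x_0}\big[\int_0^{\uptau_r}L\tilde\phi(X_t)\,\D t\big]$. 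Subtracting $\tilde\phi(x_0)=u(x_0)$, dividing by $\Exp_{x_0}[\uptau_r]$ and letting $r\downarrow 0$ (with $L\tilde\phi$ continuous at $x_0$ by continuity of $u$, continuity of $f$, and dominated convergence against \eqref{kernel}) yields the subsolution inequality $L\tilde\phi(x_0)+f(x_0)\ge 0$; the supersolution direction is symmetric.

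Uniqueness then follows from the comparison principle: if $u_1,u_2$ are two bounded viscosity solutions of \eqref{E1.5}, then $w=u_1-u_2$ is a bounded viscosity solution of $Lw=0$ in $D$ with $w=0$ on $D^c$, so $w\equiv 0$ by the weak maximum principle for $L$ developed in the paper's maximum-principle section. The main obstacle is thus the boundary-regularity argument of the second paragraph, where one must reconcile the continuous Brownian exit with the c\`adl\`ag jumps of $Y$ so that $\uptau$ vanishes uniformly as $x\to\partial D$. Once the continuity of $u$ is in hand, the viscosity verification and uniqueness reduce to standard manipulations of Dynkin's formula and the comparison principle.
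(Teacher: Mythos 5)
Your overall strategy is the same as the paper's: define $u$ by \eqref{E1.6}, show $u\in\cC_b(\Rd)$, verify the viscosity property by Dynkin's formula/strong Markov property, and deduce uniqueness from the comparison machinery of \cref{T1.2} (the paper's \cref{T5.1,T5.2}); the last two steps are indeed routine and the paper treats them as such. The genuine gap is in the continuity of $u$, which is exactly what the paper's \cref{L5.3} is about. Continuity at \emph{interior} points is not ``a direct consequence of the strong Markov property plus dominated convergence'': writing $X^x_t=x+X_t$, you need almost sure convergence of both the exit time $\uptau_{x_n}\to\uptau_z$ and the exit \emph{position} $X^{x_n}_{\uptau_{x_n}}\to X^z_{\uptau_z}$ as $x_n\to z$, since $g$ is only assumed continuous on $D^c$. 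Convergence of exit positions can fail if the path jumps exactly onto $\partial D$ at the exit moment, or touches $\partial D$ and then jumps into $(\bar D)^c$; moreover convergence of exit times already requires $\uptau_D=\uptau_{\bar D}$ a.s., i.e.\ regularity of the Lipschitz boundary for the process $X$. The paper handles precisely these points: regularity of exterior-cone points (citing Millar and Sztonyk), and the L\'evy system formula to show the two ``grazing'' events have probability zero. Your proposal contains no substitute for this step, and it is needed even for $z\in D$, not only for $z\in\partial D$.

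Your boundary-continuity sketch also does not close as stated. If no large jump fires and the compensated small-jump part stays below $\epsilon$ on $[0,\delta]$, then what forces $X$ out of $D$ is $B$ leaving the $\epsilon$-\emph{enlargement} of $D$, not the $\epsilon$-inner retraction (that implication goes the wrong way). Since $x_0\in\partial D$ lies at distance roughly $\epsilon$ inside the enlargement, on short time horizons this event has probability bounded below by a cone-aperture constant but not close to one, so the argument yields only $\Prob_{x_n}(\uptau\le t)\ge p_0>0$, not $\uptau\to 0$ in probability. To upgrade this you need Blumenthal's $0$--$1$ law at $x_0$ (or a citation to regularity of exterior-cone points for L\'evy processes with nondegenerate Gaussian part) together with the translation coupling to transfer the conclusion from $x_0$ to nearby starting points $x_n$, and in addition the convergence $X^{x_n}_{\uptau_{x_n}}\to x_0$ of the exit location. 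A smaller issue: the truncated quadratic need not satisfy $LW\le -1$, because the large-jump part contributes a term of order $\norm{W}_\infty\int_{|y|>1}j(y)\,\D{y}$, which is not small for a general kernel; boundedness of $\Exp_x[\uptau]$ is true and easy by other means (the Gaussian component gives a uniform probability of exiting $D$ in unit time, or use the paper's \cref{L2.1}), so this part is fixable, unlike the continuity step, which is the real content of the existence proof.
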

$\Exp_x[\cdot]$ denotes the conditional expectation operator conditioned on $X_0=x$, that is, $\Exp_x[\cdot]$ is the expectation operator with respect to the 
law of the L\'evy process $X+x$ where $X_0=0$.
When $f\in\cC^\alpha(\bar{D})$ and $g\in\cC^{2+\alpha}(D^c)$ for some
$\alpha>0$, the existence
of a unique classical solution to \eqref{E1.5} is known from
the work of Garroni and Menaldi \cite{GM02}.
In \cite{Barles08} Barles, Chasseigne and Imbert establish the existence of 
a viscosity solutions for a large class of nonlinear integro-differential operators. Unlike ours, \cite{Barles08} (see also \cite{Barles08a}) requires the operators to be strictly monotone in the zeroth order term. For the proof of \cref{T1.1}
we follow the approach of Cabr\'e-Caffarelli \cite{CC} and
Caffarelli-Silvestre \cite{CS09}. The following result plays a
key role in the proof of \cref{T1.1} 
and many other proofs in this article (compare it with \cite[Theorem~5.9]{CS09}).

\begin{theorem}\label{T1.2}
Let $D$ be an open bounded set,  $u$ and $v$ be two bounded functions such that u is upper-semicontinuous and v is lower-semicontinuous in $\overline{D}$. Also, assume that $L u \geq f$ and $L v \leq g$ in the viscosity sense in $D$, for two continuous functions $f$ and $g$. Then 
$L (u-v) \geq f-g$ in $D$ in the viscosity sense.

Furthermore, for a bounded function $u$ which is upper-semicontinuous in $\overline{D}$ and satisfies $L u \geq 0$ in $D$, we have  $\sup_{D} u \leq  \sup_{D^c} u$ .
\end{theorem}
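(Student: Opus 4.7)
The approach follows Caffarelli and Silvestre~\cite[Thm.~5.9]{CS09}. The central difficulty is that viscosity inequalities for a nonlocal operator cannot be combined by test-function algebra directly, because the nonlocal contribution depends on the candidate globally. The remedy is to regularize by sup- and inf-convolutions to semiconvex/semiconcave approximants for which $L$ has a pointwise meaning a.e., perform the subtraction pointwise, and return to the viscosity setting by stability.

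For the first assertion, define for small $\varepsilon>0$
\[
u^\varepsilon(x)\df \sup_{z\in\Rd}\bigl(u(z)-\tfrac{1}{\varepsilon}|x-z|^2\bigr),\qquad
v_\varepsilon(x)\df \inf_{z\in\Rd}\bigl(v(z)+\tfrac{1}{\varepsilon}|x-z|^2\bigr).
\]
Boundedness forces the optimizers to localize within $O(\sqrt\varepsilon)$, so $u^\varepsilon\searrow u$ and $v_\varepsilon\nearrow v$ locally uniformly (modulo semicontinuous envelopes); $u^\varepsilon$ is $(2/\varepsilon)$-semiconvex, $v_\varepsilon$ is $(2/\varepsilon)$-semiconcave, and by Alexandrov's theorem both admit a classical second-order Taylor expansion Lebesgue-a.e. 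Translation invariance of $L$ and the sup/inf structure propagate the viscosity sub/supersolution property to $u^\varepsilon$ and $v_\varepsilon$ on a slightly shrunken domain $D_\varepsilon\subset D$, with right-hand sides perturbed by the moduli $\omega_f(\sqrt\varepsilon)$ and $\omega_g(\sqrt\varepsilon)$. At any common Alexandrov point $x_0\in D_\varepsilon$, the one-sided envelope $u^\varepsilon(x_0+y)\geq u^\varepsilon(x_0)+y\cdot\grad u^\varepsilon(x_0)-\varepsilon^{-1}|y|^2$ (and the opposite for $v_\varepsilon$), combined with~\eqref{kernel}, yields absolute convergence of the nonlocal integrals defining $Lu^\varepsilon(x_0)$ and $Lv_\varepsilon(x_0)$, and the viscosity inequalities upgrade to pointwise ones; subtracting gives $L(u^\varepsilon-v_\varepsilon)(x_0)\geq f(x_0)-g(x_0)-\omega_f(\sqrt\varepsilon)-\omega_g(\sqrt\varepsilon)$. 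Since $u^\varepsilon-v_\varepsilon$ is itself semiconvex, a $C^2$ function touching it from below at $x_0$ can be approximated by Alexandrov contact points with controlled second-order data (a Jensen-type lemma), so the pointwise inequality transfers to the viscosity inequality on $D_\varepsilon$. Standard stability of viscosity subsolutions under the half-relaxed limits as $\varepsilon\downarrow 0$ then produces $L(u-v)\geq f-g$ in $D$.

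For the second assertion, apply the first part with $v\equiv M'\df \sup_{D^c}u$ and $f=g=0$, giving $L(u-M')\geq 0$ in $D$ in the viscosity sense; assume for contradiction that $\sup_D u>M'$. Upper-semicontinuity on the compact $\overline{D}$ yields $x_0\in\overline{D}$ with $u(x_0)=\max_{\overline{D}}u=:M>M'$, and since $u\leq M'<M$ on $\partial D\subset D^c$, the point $x_0$ lies in $D$. Testing from above at $x_0$ with $\varphi_\eta(x)\df M+\tfrac{\eta}{2}|x-x_0|^2$ and the standard mixing convention ($\varphi_\eta$ inside $B_\delta(x_0)$, $u$ outside) gives
\[
0\leq L\varphi_\eta(x_0)=d\eta+\tfrac{\eta}{2}\!\!\int_{|y|\leq\delta}\!|y|^2 j(y)\D y+\!\!\int_{|y|>\delta}\!\bigl(u(x_0+y)-M\bigr)j(y)\D y,
\]
whose last integrand is non-positive; sending $\eta\downarrow 0$ and then $\delta\downarrow 0$ forces $u(x_0+y)=M$ for $j$-a.e. $y\in\Rd$. \textbf{The main obstacle} is that when $\supp j$ is too small, this equation is compatible with $u\leq M'<M$ on $D^c$, and one does not obtain a contradiction directly. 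The remedy—where the nondegenerate Laplacian enters essentially—is to pass to a strict subsolution by setting $w_\varepsilon\df u+\varepsilon\rho$, where $\rho\in C^2_b(\Rd)$ is chosen so that $L\rho\geq c>0$ on $\overline D$; for instance, a truncated exponential $\rho(x)=\chi(x)\E^{\lambda(x_1-x_1^*)}$, with $x_1^*$ a number below $\inf_{\overline{D}} x_1$ and $\lambda$ large, produces $\Delta\rho\geq\lambda^2\E^{\lambda\cdot O(1)}$ on $\overline{D}$ while the nonlocal tail is bounded thanks to \eqref{kernel}, and the first term dominates. A short check gives $Lw_\varepsilon\geq\varepsilon c$ in the viscosity sense; since the interior max of $w_\varepsilon$ persists for small $\varepsilon$, testing with the constant test function at that max yields $0\geq\varepsilon c>0$, the desired contradiction.
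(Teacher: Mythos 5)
Your proof of the first assertion is essentially the paper's own route (\cref{L5.1}, \cref{L5.2}, \cref{T5.1}): sup/inf convolutions, propagation of the viscosity inequalities with an error $d_\varepsilon$ on a shrunken domain, pointwise evaluation at points of second-order differentiability, and a Jensen/ABP-type measure argument to return to the viscosity formulation before letting $\varepsilon\to 0$ --- the paper implements the transfer step via the convex-envelope lemma of Caffarelli--Cabr\'e \cite{CC} rather than quoting a Jensen-type lemma, but this is the same mechanism; note only that the $C^2$ function in your transfer step should touch $u^\varepsilon-v_\varepsilon$ from \emph{above}, not below, since it is a subsolution property that is being tested. For the second assertion you take a genuinely different (mirror-image) barrier argument: the paper invokes \cite[Lemma~5.5]{Mou19} to get $\chi\in C^2(\bar D)\cap \cC_b(\Rd)$ with $L\chi\leq -1$ and slides $\phi_M=M+\varepsilon\chi$ down until it touches $u$ from above at an interior point, whereas you build an explicit strict subsolution $\rho$ with $L\rho\geq c>0$ on $\bar D$ and reach the contradiction at the interior maximum of $u+\varepsilon\rho$ with a (perturbed) constant test function. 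Your variant is correct and has the merit of being self-contained (no appeal to Mou's barrier); incidentally, the detour through part one with $v\equiv\sup_{D^c}u$ is superfluous, since $L$ annihilates constants. One detail to tighten: the claim that $L\rho\geq c$ because ``the nonlocal tail is bounded and the Laplacian dominates'' is too coarse --- bounding the small-jump part by $\tfrac12\sup_{B_1(x)}\norm{D^2\rho}\int_{|y|\leq 1}|y|^2 j(y)\D{y}$ costs a factor $\E^{\lambda}$ over $\Delta\rho(x)=\lambda^2\E^{\lambda(x_1-x_1^*)}$ and need not be dominated. The clean verification takes the cutoff equal to $1$ on a unit neighbourhood of $\bar D$ and nonnegative everywhere, so that for $x\in\bar D$ the small jumps contribute $\E^{\lambda(x_1-x_1^*)}\int_{|y|\leq 1}\bigl(\E^{\lambda y_1}-1-\lambda y_1\bigr)j(y)\D{y}\geq 0$ by convexity, while the large jumps are bounded below by $-\E^{\lambda(x_1-x_1^*)}\int_{|y|>1}j(y)\D{y}$, giving $L\rho\geq \E^{\lambda(x_1-x_1^*)}\bigl(\lambda^2-\int_{|y|>1}j(y)\D{y}\bigr)>0$ on $\bar D$ for $\lambda$ large; with this fix your argument for $\sup_D u\leq\sup_{D^c}u$ goes through.
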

Proof of \cref{T1.1} can be found in \cref{S-EU} whereas the proof 
of \cref{T1.2} follows from \cref{T5.1,T5.2}.
The stochastic representation \eqref{E1.6} of $u$ plays a key role in this
article. In particular, using this representation of the solution we can establish an Alexandrov-Bakelman-Pucci (ABP) maximum
principle.
\begin{theorem}[ABP-maximum principle]\label{T1.3}
Assume \hyperlink{A1}{(A1)} and let $D$ be an open, bounded set. Let $f:D\to \RR$ be continuous and $u\in \cC_b(\Rd)$ be a
viscosity subsolution to 
$$L u = -f\quad \text{in}\; \{u>0\}\cap D, \quad
\text{and}\quad u\leq 0\quad \text{in}\; D^{c}.$$
Then for every $p>\frac{d}{2}$, there exists a constant $C=C(d, p, \diam(D))$, satisfying
$$\sup_D u \leq C\, \norm{f^+}_{L^p(D)}.$$
\end{theorem}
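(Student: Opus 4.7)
The plan is to reduce the theorem to a heat-kernel estimate for the process $X$. Since \cref{T1.1} requires a Lipschitz domain, I would first enlarge $D$ to an open ball $B$ containing $\bar D$, and extend $f^+ \df \max(f,0)$ by zero outside $D$. Let
\[
v(x) \df \Exp_x\bigg[\int_0^{\uptau_B} f^+(X_t)\, \D{t}\bigg],
\]
which by \cref{T1.1} is the unique viscosity solution in $\cC_b(\Rd)$ of $L v = -f^+$ in $B$ with $v \equiv 0$ on $B^c$. On the open subset $D^+ \df \{u > 0\} \cap D$, the combination rule in \cref{T1.2} yields $L(u - v) \geq -f - (-f^+) = f^+ - f \geq 0$ in the viscosity sense, while the hypothesis $u \leq 0$ on $D^c$ together with $v \geq 0$ forces $u - v \leq 0$ on $(D^+)^c$. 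Applying the maximum principle part of \cref{T1.2} to $u - v$ on $D^+$ then yields $u \leq v$ on all of $\Rd$, so it suffices to prove $\sup_{D} v \leq C\,\norm{f^+}_{L^p(D)}$.

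The key analytic ingredient is a universal pointwise bound on the transition density of $X$. Since $B$ and $Y$ are independent, the density of $X_t = B_t + Y_t$ is the convolution $p_t^X = p_t^B * \mu_t^Y$ of the (double-speed) Gaussian density with the law of $Y_t$; as $\mu_t^Y$ is a probability measure,
\[
p_t^X(z) \leq \norm{p_t^B}_\infty \leq c_0\, t^{-d/2} \quad \text{uniformly in } z \in \Rd.
\]
Writing $R$ for the radius of $B$ and using $|B| \leq c\, R^d$, I pick $T = T(d, R)$ large enough that
\[
\sup_{y \in B}\Prob_y(\uptau_B > T) \leq \sup_{y \in B}\int_B p_T^X(z - y)\,\D{z} \leq c_0\, |B|\, T^{-d/2} \leq \eta
\]
for some $\eta < 1$. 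Iterating this bound via the Markov property at the deterministic times $kT$ gives $\Prob_x(\uptau_B > kT) \leq \eta^k$ for every $x \in B$.

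Consequently, since $f^+$ is supported in $D$,
\[
v(x) \leq \sum_{k=0}^{\infty} \eta^k\,\sup_{y \in B}\Exp_y\bigg[\int_0^T f^+(X_s)\,\D{s}\bigg] = \frac{1}{1-\eta}\,\sup_{y \in B}\Exp_y\bigg[\int_0^T f^+(X_s)\,\D{s}\bigg].
\]
Fubini and H\"older's inequality with conjugate exponent $q = p/(p-1)$ give
\[
\Exp_y\bigg[\int_0^T f^+(X_s)\,\D{s}\bigg] = \int_0^T\!\int_D f^+(z)\, p_s^X(z - y)\,\D{z}\,\D{s} \leq \norm{f^+}_{L^p(D)}\int_0^T \norm{p_s^X}_{L^q(\Rd)}\,\D{s},
\]
and the interpolation $\norm{p_s^X}_{L^q}^q \leq \norm{p_s^X}_\infty^{q-1}\int p_s^X \leq c_0^{q-1}\,s^{-d(q-1)/2}$ produces $\norm{p_s^X}_{L^q} \leq c\, s^{-d/(2p)}$. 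The time integral $\int_0^T s^{-d/(2p)}\,\D{s}$ is finite precisely when $d/(2p) < 1$, which is the hypothesis $p > d/2$, and the resulting constant depends only on $d$, $p$, and $\diam(D)$.

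The main obstacle I anticipate is the first step: carefully interpreting $Lu \geq -f$ on the possibly rough open set $D^+$ and applying \cref{T1.2} cleanly to the difference $u - v$ there (a minor wrinkle is also the enlargement to a ball needed to invoke the Lipschitz hypothesis of \cref{T1.1}). Once the comparison $u \leq v$ is in hand, the remainder is a mechanical convolution-plus-H\"older calculation, and it is precisely the local Laplacian part of $L$ that supplies the Gaussian density bound and hence the regularity improvement, irrespective of the structure of the L\'evy kernel $j$.
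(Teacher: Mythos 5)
The second half of your argument (the bound on $v$) is correct and is essentially the content of the paper's \cref{L2.2}: the Gaussian bound $\norm{p^X_t}_\infty\leq (4\pi t)^{-d/2}$ coming from the Brownian factor, H\"older with exponent $q=p'$, and the condition $p>d/2$ to integrate $s^{-d/(2p)}$ near $0$ are exactly the paper's computation; where the paper controls the tail of the time integral through exit-time moments (\cref{L2.1}), you use the geometric decay $\Prob_x(\uptau_B>kT)\leq\eta^k$ via the Markov property, which is an equally valid (and self-contained) variant.

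The genuine gap is in the reduction $u\leq v$. You invoke \cref{T1.1} to assert that $v(x)=\Exp_x\bigl[\int_0^{\uptau_B}f^+\Ind_D(X_t)\,\D{t}\bigr]$ is a viscosity solution of $Lv=-f^+\Ind_D$ in the ball $B$, but \cref{T1.1} requires the right-hand side to belong to $\cC(\bar{B})$, and the zero-extension of $f^+$ is in general discontinuous across $\partial D$ (moreover $f$ is only assumed continuous on the open set $D$, so $f^+$ need not even be bounded near $\partial D$). Hence the supersolution property of $v$ on $\{u>0\}\cap D$, which is what feeds into \cref{T1.2}, is unproven; this is not the ``minor wrinkle'' of enlarging to a ball but the crux. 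The obvious repair --- dominating $f^+\Ind_D$ by a continuous function $g$ on $\bar{B}$ --- loses the estimate: any continuous majorant must stay large on a full neighbourhood of the portion of $\partial D$ where $f^+$ does not vanish, and since $D$ is an arbitrary bounded open set (possibly with $|\partial D|>0$), $\norm{g}_{L^p(B)}$ is not controlled by $\norm{f^+}_{L^p(D)}$, so the constant $C(d,p,\diam(D))$ is destroyed. What is actually needed here is the bridge the paper uses in \cref{T2.1}: show that the viscosity subsolution $u$ itself satisfies the probabilistic inequality $u(x)\leq \Exp_x[u(X_{\uptau})]+\Exp_x\bigl[\int_0^{\uptau}f^+(X_t)\,\D{t}\bigr]$ with $\uptau$ the exit time from $\{u>0\}\cap D$ (the paper delegates this to the arguments of \cite[Theorem~3.1]{BL21}), after which the estimate you re-derived (the paper's \cref{L2.2}) concludes. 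Alternatively, you would have to prove directly, via the strong Markov property and the strong Feller-type continuity coming from the bounded transition density, that your probabilistically defined $v$ is a lower-semicontinuous viscosity supersolution for merely $L^p$ data; that is genuine additional work not covered by \cref{T1.1}, and your proposal does not supply it.
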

In \cite[Theorem~3.2]{MS18} Mou and \'{S}wi\k{e}ch consider the Pucci extremal operators
and establish the ABP estimate for strong solutions. Similar estimate for
viscosity solutions can be found in Mou \cite{Mou19}. It should be observed that
the ABP estimates in \cite{Mou19,MS18} holds for $p>p_0$ where 
$p_0$ is some number in $[d/2, d)$. \cref{T1.3} shows that we can choose
$p_0=d/2$ for $L$. Also, compare this result with \cite[Theorem~1.9]{Cab}.
For a proof of \cref{T1.3}, see \cref{S-ABP}. Recently, Sobolev
regularity and maximum principles for the operator $-\Delta + (-\Delta)^s$
, $s\in (0, 1)$, are studied by Biagi et.\ al. in \cite{BDVV-a}. 
We also mention the work of Alibaud et.\ al. \cite{ADEJ} where
the authors provide a complete characterization of the translation-invariant integro-differential operators that satisfy the Liouville property
in the whole space.

In view of \cref{T1.1,T1.3} we can define a generalized principal 
eigenvalue for $L$ in the spirit of Berestycki, Nirenberg and Varadhan
\cite{BNV}. By $\cC_{+}(D)$ ($\cC_{b, +}(\Rd)$)
 we denote the set of all positive (bounded and non-negative)
continuous functions in $D$ (in $\Rd$, respectively).
Given any bounded domain $D$, the (Dirichlet) generalized principal eigenvalue of $L$ in $D$ is defined to be
\begin{equation}\label{P-eigen}
\lambda_D=\sup\{\lambda\; :\; \exists\; v\in\cC_{+}(D)\cap\cC_{b, +}(\Rd)\;
\text{satisfying}\; Lv  + cv + \lambda v\leq 0 \; \text{in}\; D\},
\end{equation}
where $c\in \cC_b(D)$.
%To prove the existence of a unique principal eigenfunction we impose
%the following condition on $j$ which helps us to obtain a strong
%maximum principle (see Ciomaga \cite{Ciomaga}).
%\begin{itemize}
%\item[(\hypertarget{A2}{\bf A2})] There exists $r_\circ>0$ such that $j(y)>0$ for all $|y|\leq r_\circ$.
%\end{itemize}
We prove the existence of a unique eigenfunction.
\begin{theorem}\label{T1.4}
Grant \hyperlink{A1}{(A1)}. Let $D$ be a bounded
Lipschitz domain satisfying a uniform exterior sphere condition. Let $c\in\cC(\bar{D})$.
 There exists a unique $\psi_D \in \cC_b(\Rd)$  satisfying
\begin{align*}
L \psi_D + c\, \psi_D &= -\lambda_D \psi_D  \quad \text{in}\; D, \\
\psi_D &= 0 \quad \text{in}\;  D^c, \\
\psi_D &>0 \quad \text{in} \; D, \quad \psi_D (0) = 1.
\end{align*}
Moreover, if $u \in \cC_{b,+}(\Rd)$ is positive in $D$ and satisfies
$$
L u + cu \leq -\lambda u \quad \text{in} \quad D,
$$
for some $\lambda \in \RR$ then $\lambda \leq \lambda_D$. Furthermore, if $\lambda = \lambda_D$ and $u=0$ in $D^c$, then
we have $u = k \psi_D$ for some $k >0$. Furthermore, $\lambda_D$ is
the only Dirichlet eigenvalue with a positive eigenfunction.
\end{theorem}
For some recent works dealing with generalized eigenvalue problems of 
integro-differential operator we refer \cite{B20,BL21,QSX}. The proof 
of \cref{T1.4} is quite standard which uses Kre{\u\i}n-Rutman theorem and
a narrow domain maximum principle (\cref{C2.1}). For a proof
see \cref{T3.1}. 

We next concentrate on the Faber-Krahn inequality for the operator $L$.
In particular, we prove
\begin{theorem}[Faber-Krahn inequality]\label{T1.5}
Assume \hyperlink{A1}{(A1)}. 
Also, assume that $j(y)=j(|y|)$ and $j$ is radially decreasing.  Then for any bounded, open set $D$ with $|\partial D|=0$ we have
\begin{equation}\label{ET1.5A}
\lambda_D\geq \lambda_B,
\end{equation}
where $B$ is ball around $0$ satisfying $|B|=|D|$.
\end{theorem}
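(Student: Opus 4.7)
The plan is to apply Schwarz symmetrization to the principal eigenfunction $\psi_D$ produced by \cref{T1.4} (with $c\equiv 0$), after first expressing $\lambda_D$ through the Dirichlet form of $-L$. Since $j$ is symmetric, $-L$ is formally self-adjoint on $L^2$ with quadratic form
\begin{equation*}
\mathcal{E}(u,u) \;=\; \int_{\Rd} \abs{\grad u}^2\,\D{x} \;+\; \tfrac12 \iint_{\Rd\times\Rd} \bigl(u(x)-u(y)\bigr)^2 j(x-y)\,\D{x}\,\D{y},
\end{equation*}
which is finite for any $u\in H^1_0(D)$ extended by zero, thanks to \eqref{kernel}. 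First I would verify that $\psi_D$ lies in this energy space (using the interior regularity afforded by the Laplacian part together with the boundedness supplied by \cref{T1.1,T1.2}, plus the fact that $\psi_D\equiv 0$ on $D^c$ and satisfies a uniform exterior sphere condition), and deduce the identity
\begin{equation*}
\lambda_D \int_D \psi_D^2\,\D{x} \;=\; \mathcal{E}(\psi_D,\psi_D)
\end{equation*}
by testing the eigenvalue equation $-L\psi_D = \lambda_D \psi_D$ against $\psi_D$ and integrating by parts (the non-local part yields the symmetric double integral once $j$ is assumed symmetric).

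Next I would establish the complementary Rayleigh upper bound for $\lambda_B$: for every non-negative $\varphi\in H^1_0(B)$ (extended by zero outside $B$) one has
\begin{equation*}
\lambda_B \;\leq\; \frac{\mathcal{E}(\varphi,\varphi)}{\int_B \varphi^2\,\D{x}}.
\end{equation*}
This is done by testing against the principal eigenfunction $\psi_B$ on $B$ and applying the self-adjointness of $-L$; alternatively one argues via the characterization \eqref{P-eigen} directly, using that $-L\psi_B=\lambda_B\psi_B$ together with Kato-type inequalities to compare $\varphi$ with $\psi_B$.

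With these two ingredients in hand, I would let $\psi_D^{\!*}$ denote the symmetric decreasing rearrangement of $\psi_D$. Equimeasurability gives $\|\psi_D^{\!*}\|_{L^2}=\|\psi_D\|_{L^2}$, and because $|\{\psi_D>0\}|\leq|D|=|B|$, the support of $\psi_D^{\!*}$ lies inside $\bar B$, so $\psi_D^{\!*}\in H^1_0(B)$. Two classical rearrangement inequalities now control the energy:
\begin{equation*}
\int_{\Rd}\abs{\grad \psi_D^{\!*}}^2\,\D{x} \;\leq\; \int_{\Rd}\abs{\grad \psi_D}^2\,\D{x} \qquad \text{(P\'olya--Szeg\H{o})},
\end{equation*}
and, since $j=j^{*}$ is symmetric decreasing by hypothesis, the Riesz rearrangement inequality applied to $\iint \psi_D(x)\psi_D(y) j(x-y)\,\D{x}\,\D{y}$ together with the pointwise identity $(a-b)^2 = a^2+b^2-2ab$ yields
\begin{equation*}
\iint_{\Rd\times\Rd} \bigl(\psi_D^{\!*}(x)-\psi_D^{\!*}(y)\bigr)^2 j(x-y)\,\D{x}\,\D{y} \;\leq\; \iint_{\Rd\times\Rd} \bigl(\psi_D(x)-\psi_D(y)\bigr)^2 j(x-y)\,\D{x}\,\D{y}.
\end{equation*}
Combining these, $\mathcal{E}(\psi_D^{\!*},\psi_D^{\!*}) \leq \mathcal{E}(\psi_D,\psi_D)$, and plugging $\varphi=\psi_D^{\!*}$ into the Rayleigh bound for $\lambda_B$ delivers \eqref{ET1.5A}.

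The main obstacle is the first step, namely upgrading $\psi_D$ from the viscosity/stochastic solution of \cref{T1.1,T1.4} to an $H^1_0(D)$ function for which the Dirichlet form identity $\mathcal{E}(\psi_D,\psi_D)=\lambda_D\int\psi_D^2$ is rigorously available, since the paper develops $L$ primarily in the viscosity framework; I expect this requires invoking standard elliptic regularity for the Laplacian together with the boundedness of the nonlocal contribution, plus a cut-off/approximation argument to justify the integration by parts. The rearrangement inequalities themselves are by now standard (the non-local one going back to Almgren--Lieb, with $j$ symmetric decreasing playing the role of the rearranged kernel).
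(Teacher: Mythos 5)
Your proposal takes a genuinely different route from the paper, and it contains a real gap. The paper never touches the Dirichlet form: it first proves (\cref{L3.5}) the probabilistic representation $\lambda_D=-\lim_{t\to\infty}\tfrac1t\log\Prob_x(\uptau>t)$ via the stochastic formula for the principal eigenfunction of \cref{T1.4}, then compares survival probabilities of $D$ and of the ball $B$ by applying the Brascamp--Lieb--Luttinger inequality \cite{BLL} to the time-discretized probability $\Prob_0(X_{t/m}\in D,\dots,X_t\in D)$, using that the transition density of $X$ is isotropic and radially decreasing (by \cite{W83}, since $j$ is radial decreasing and there is a Gaussian component); general $D$ with $|\partial D|=0$ is then reached by approximation with smooth decreasing domains together with the eigenvalue-continuity result \cref{L3.4}. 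Your plan is instead the variational/symmetrization route of \cite{BDVV}, which the authors explicitly mention in the introduction as something that ``might be possible'' in an appropriate variational set-up, and which they deliberately avoid precisely because their framework is viscosity-theoretic.

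The gap is the bridge between the two frameworks, and you underestimate it. The pair $(\lambda_D,\psi_D)$ of \cref{T1.4} lives entirely in the viscosity/Kre{\u\i}n--Rutman setting, and $\lambda_D$ is the generalized principal eigenvalue \eqref{P-eigen}. Your argument needs two identifications that are nowhere established here: (i) that $\psi_D\in H^1_0(D)$ and satisfies the energy identity $\mathcal{E}(\psi_D,\psi_D)=\lambda_D\|\psi_D\|_{L^2}^2$ --- note the nonlocal term cannot even be evaluated pointwise for a merely continuous function, and the available estimates (interior $C^\alpha$ from \cite{Mou19}, the boundary bound of \cref{L3.1}) do not by themselves give an $H^1$ bound up to the boundary, so ``integration by parts'' must be run through a genuine weak-solution theory; and (ii), more seriously, the Rayleigh bound $\lambda_B\le\mathcal{E}(\varphi,\varphi)/\|\varphi\|_{L^2}^2$ for all $\varphi\in H^1_0(B)$. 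For (ii), ``testing against $\psi_B$ and using self-adjointness'' is not available, since $\psi_B$ is again only a viscosity solution; what is really needed is that the paper's $\lambda_B$ equals the bottom of the spectrum of the self-adjoint form realization of $-L$ on $B$, which requires either proving continuity and positivity of the variational ground state (a regularity statement for weak solutions of this mixed operator under only \eqref{kernel}, not contained in the paper) so that it is admissible in \eqref{P-eigen}, or some other argument. Finally, a smaller but genuine flaw: when $\int_{|y|\le1}j(y)\,\D{y}=\infty$ the expansion $(a-b)^2=a^2+b^2-2ab$ cannot be integrated term by term (the cross term and the diagonal terms are individually infinite), so the Riesz-rearrangement step must be replaced by the rearrangement inequality for the full difference form, e.g.\ \cite[Theorem~A.1]{FS08} applied after truncating $j$. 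The program can very likely be completed --- it is essentially how \cite{BDVV} argue for $-\Delta+(-\Delta)^s$ --- but items (i)--(ii) are exactly the missing mathematical content, and avoiding them is the point of the paper's probabilistic proof.
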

As is well-known Faber-Krahn inequality was first proved independently by Faber \cite{Faber} and Krahn \cite{Krahn} for the Laplacian. See also \cite[Chapter~2]{Hen06}. Very recently, Biagi et.\ al.\ \cite{BDVV} establish Faber-Krahn
inequality for the operator 
$-\Delta + (-\Delta)^s$ for $s\in(0,1)$. Their method uses  Schwarz symmetrization combined with the Polya-Szeg\"{o} inequality and 
\cite[Theorem~A.1]{FS08}. Since the inequality in \cite[Theorem~A.1]{FS08}
holds for a more general class of kernel $j$, it might be possible to
mimic the proof of \cite{BDVV} in an appropriate variational set-up
and Sobolev space to establish \eqref{ET1.5A}. However, our viscosity solution approach does not impose any additional regularity on the solution. Using \cref{T1.4}, we find
a probabilistic 
representation of the principal eigenvalue which together
with the Brascamp-Lieb-Luttinger
inequality gives us \cref{T1.5}. Also, note that our condition on $j$ is very general and our proof works in dimension one.
Proof of \cref{T1.5} can be found in \cref{S-FK}.

As another application of ABP maximum principle and \cref{T1.2} we also study symmetry properties of the positive solutions of semilinear equations.
Thanks to \cref{T1.2}, symmetry of the positive solutions can be established using the standard method
of moving plane \cite{GNN,BL21,FW14}. See \cref{S-Gib} for more
detail. Another interesting application of \cref{T1.2} is the
one-dimensional symmetry result related to the Gibbons' conjecture.
More precisely, we prove the following
\begin{theorem}\label{T1.6}
Assume \hyperlink{A1}{(A1)}.
Let $u\in\cC_b(\Rd)$ solve
\begin{equation*}
 \begin{split}
L u(x) &= f(u(x))\quad  \text{for}\; x \in \Rd,
\\[2mm]
\lim_{x_n \rightarrow \pm \infty } u(x^{\prime}, x_n) &= \pm 1 \quad \text{uniformly for } \;  x^{\prime} \in \RR^{d-1},
\end{split}
\end{equation*}
where $f\in\cC^1(\RR)$ satisfying
\begin{equation*}
\inf_{|r|\geq 1}  f^{'}(r) >0\,.
\end{equation*}
Then there exists a strictly increasing function $u_0:\RR\to\RR$ satisfying
$$u(y, t)=u_0(t)\quad \text{for all}\; y\in\RR^{d-1}, \; t\in\RR.$$
\end{theorem}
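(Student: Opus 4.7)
The plan is to prove the theorem by the sliding method, adapting the classical proof of the Gibbons conjecture for the Laplacian (Berestycki--Hamel--Monneau, Farina) to the mixed local-nonlocal operator $L$. The key tools are \cref{T1.2} (weak comparison for $L$), the strong maximum principle of Ciomaga that follows from \hyperlink{A2}{(A2)}, and the stability condition $\inf_{|r|\geq 1} f'(r)>0$. Combined with continuity of $f'$, the latter yields constants $\delta,\kappa>0$ with $f'(r)\geq\kappa$ on $\{|r-1|\leq\delta\}\cup\{|r+1|\leq\delta\}$, and the uniform asymptotics of $u$ produce $R>0$ with $|u(x)-\sign(x_n)|<\delta$ for $|x_n|\geq R$. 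For a unit vector $\nu\in\Rd$ and $\tau\geq 0$, set $u^\tau(x):=u(x+\tau\nu)$ and $w^\tau:=u^\tau-u$. Since $f\in\cC^1$, the mean value theorem gives
\[
L w^\tau \;=\; f(u^\tau)-f(u) \;=\; c^\tau(x)\,w^\tau,\qquad c^\tau\in L^\infty(\Rd);
\]
whenever $u(x)$ and $u^\tau(x)$ both lie within $\delta$ of the same member of $\{\pm 1\}$, we have $c^\tau(x)\geq\kappa$, and when $\nu\cdot e_n>0$ this ``good'' inequality holds outside the union of two horizontal slabs of controlled $x_n$-width.

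The core step (with $\nu=e_n$) is to show that
\[
\tau^*:=\inf\bigl\{\tau>0 : w^s\geq 0\text{ in }\Rd\text{ for all }s\geq\tau\bigr\}
\]
equals $0$. First, $\tau^*<\infty$: for $\tau$ large, the set where $w^\tau$ might be negative is confined to horizontal slabs of bounded $x_n$-width on which $c^\tau\geq\kappa>0$, and applying \cref{T1.2} on expanding coordinate boxes $\{|x'|\leq K\}$---together with $w^\tau\to 0$ as $|x_n|\to\infty$ and exponential-type horizontal barriers---forces $w^\tau\geq 0$. Second, $\tau^*=0$: if $\tau^*>0$, continuity gives $w^{\tau^*}\geq 0$. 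If $\inf w^{\tau^*}=0$ is attained at some $x_0$, the strong maximum principle from \hyperlink{A2}{(A2)} yields $w^{\tau^*}\equiv 0$, contradicting the asymptotics $u\to\pm 1$. Otherwise, a minimizing sequence $x_k$ must (by the asymptotics) have $\{(x_k)_n\}$ bounded, and extracting a limit of horizontal translates $u(\cdot+x_k)$ produces a bounded solution for which the analogous $\tilde w$ attains zero, reducing to the previous case. A small perturbation of $\tau^*$ then yields some $\tau<\tau^*$ with $w^\tau\geq 0$, contradicting the infimum.

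Once $u$ is nondecreasing in $e_n$, the one-dimensional symmetry follows by tilting. For any horizontal unit vector $e$ and any $\varepsilon>0$, the direction $\nu_\varepsilon:=(\varepsilon e_n+e)/\sqrt{1+\varepsilon^2}$ satisfies $\nu_\varepsilon\cdot e_n>0$, so the same sliding argument applied with $\nu=\nu_\varepsilon$ gives $u(\cdot+t\nu_\varepsilon)\geq u$ for all $t\geq 0$. Sending $\varepsilon\downarrow 0$ yields $u(\cdot+te)\geq u$, and replacing $e$ with $-e$ yields the reverse inequality; hence $u(x+te)=u(x)$ for every horizontal $e$ and $t\in\RR$, so $u(x',x_n)=u_0(x_n)$ for some $u_0:\RR\to\RR$. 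Strict monotonicity of $u_0$ is immediate from the strong maximum principle applied to $w^\tau$ for any $\tau>0$.

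The principal obstacle is the sliding step itself: $w^\tau$ is defined on all of $\Rd$ and its potential negative set is an unbounded horizontal slab, so \cref{T1.2} cannot be applied directly. The resolution exploits $c^\tau\geq\kappa>0$ outside compact $x_n$-intervals, combined with exponential-decay barriers in the $x'$ directions (or, equivalently, compactness arguments for horizontal translates), to reduce to bounded coordinate boxes where \cref{T1.2} and, if needed, the ABP estimate of \cref{T1.3}, apply. Producing sufficient equicontinuity of bounded viscosity solutions of $L$ to extract limits along translating sequences is the other delicate technical point.
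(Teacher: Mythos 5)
Your overall strategy is the one the paper actually follows (a Farina--Valdinoci-type sliding/translation argument using \cref{T1.2}, the strong maximum principle under (A2), H\"older-based compactness of translates, and a final tilting limit), but two steps as you have written them would fail. The more serious one is the dichotomy you use to show $\tau^*=0$. Because $u(x',x_n)\to\pm1$ uniformly as $x_n\to\pm\infty$, one automatically has $w^{\tau^*}(x)=u(x+\tau^*e_n)-u(x)\to 0$ as $|x_n|\to\infty$; hence $\inf_{\Rd}w^{\tau^*}=0$ always, and there are always minimizing sequences with $(x_k)_n\to+\infty$. Your claim that ``by the asymptotics'' a minimizing sequence must have $\{(x_k)_n\}$ bounded is therefore false, and along an escaping sequence the horizontal-translate limit is the constant $+1$ (or $-1$), for which $\tilde w\equiv 0$ produces no contradiction with anything. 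The argument has to be quantified over a slab: this is exactly why the paper defines the critical threshold $h_\circ$ in \eqref{ET4.2A0} through positivity of $\Gamma_h[u]$ on $\{|x_d|\leq M\}$ only, takes the witness points $x_k$ for $h_\circ-\varepsilon_k$ inside that slab (so that the translated limits retain the $\pm1$ asymptotics at $x_d=\pm jh_\circ\nu_d$ up to bounded shifts), and derives the contradiction $2(1-\delta)\leq w_\infty(jh_\circ\nu)-w_\infty(-jh_\circ\nu)=0$ after the strong maximum principle forces $\Gamma_{h_\circ}[w_\infty]\equiv 0$. Relatedly, your ``small perturbation of $\tau^*$'' needs a uniform positive lower bound for $w^{\tau^*}$ on the slab together with a uniform modulus of continuity for $u$ (the paper gets $u\in\cC^\alpha_b(\Rd)$ from Mou's interior estimate, \eqref{EL4.1A}); mere pointwise continuity is not enough on an unbounded slab.

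The second failure point is the ``exponential-type horizontal barriers'' branch of your unbounded-domain maximum principle. \cref{T1.2} is a comparison statement for \emph{bounded} sub/supersolutions, with test functions in $\cC^2_b$, and for a general kernel $j$ satisfying only \eqref{kernel} and (A2) (e.g.\ with polynomially heavy tails) an exponential in $x'$ is not integrable against $j$, so $L$ applied to such a barrier is not even defined; this device cannot be used here. The alternative you mention in parentheses --- compactness of horizontal translates --- is the route that works, and it is precisely the paper's \cref{L4.1}: a maximum principle in all of $\Rd$ for $Lw-c\,w=0$ with $w\geq 0$ off $U$ and $c\geq\kappa>0$ on $U$, proved by translating along a minimizing sequence (which stays a positive distance from $U^c$ thanks to the uniform H\"older bound), passing to a limit by stability of viscosity solutions, and testing the limiting interior minimum with a constant. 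With these two repairs --- slab-localized threshold plus witness points in the slab, and \cref{L4.1} in place of unbounded barriers --- your sliding argument becomes the paper's proof of \cref{T4.2}, including the concluding tilting limit $\nu_d\searrow 0$.
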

The above problem is inspired by a conjecture of G.\ W.\ Gibbons \cite{GT99}
which was formulated for the classical Laplacian operator. The classical 
Gibbons' conjecture was proved by several researchers using different
approaches; see for instance, \cite{BBG,BHM,F99}. In \cite{FV11} Farina and
Valdinoci prescribed a unified approach to this problem which also works
for several other classes of operators. Using the approach of \cite{FV11},
a similar problem is treated in \cite{BVDV} for the operator
$-\Delta+(-\Delta)^s$ with $s\in (0,1)$. For the proof of 
\cref{T1.6} we also broadly follow the approach of \cite{FV11} but,
 thanks to \cref{T1.2}, we do not impose any additional regularity assumption on $u$. Proof of \cref{T1.6} can be found in \cref{S-Gib}.
 
The rest of the paper is organized as follows. In \cref{S-ABP} we prove the
ABP maximum principle and the Hopf's lemma. The
existence of principal eigenfunction and Faber-Krahn inequality are
discussed in \cref{S-FK}. \cref{S-Gib} contains the proof of Gibbons'
conjecture whereas \cref{S-EU} provides the proofs of the existence and
uniqueness results for Dirichlet boundary value problems.

%%%%%%%%%%%%%%%%%%%%%%%%%%%%%%%%%%%%%%%%%%%%%%%%%%%%%%%%%%%%%%%%%%%%%%
\section{Maximum principles}\label{S-ABP}
In this section we prove a Alexandrov-Bakelman-Pucci (ABP) maximum
principle (\cref{T2.1}), a Hopf's lemma (\cref{T2.2}) and a strong maximum principle (\cref{T2.3}).
We begin with the following estimate on the exit time.
\begin{lemma}\label{L2.1}
Assume \hyperlink{A1}{(A1)}.
Let $D \subset \Rd$ be a bounded domain.  For every $k \in N$ we have
$$
\sup_{x \in D} \Exp_x[ \uptau^ k] \leq  k! \left( \sup_{x \in D} \Exp_x[ \uptau] \right)^k.
$$
Moreover, there exists a constant $\theta = \theta(d, {\rm diam}(D))$, monotonically increasing with respect to $\diam(D)$, such that
$$
\sup_{x\in D}  \Exp_x[ \uptau^ k] \leq  k! \theta^k.
$$
\end{lemma}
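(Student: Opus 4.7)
The first inequality follows by induction on $k$ using the strong Markov property of $X$. Starting from the identity
\begin{equation*}
\uptau^k \;=\; k\int_0^{\uptau} (\uptau-t)^{k-1}\, dt \;=\; k\int_0^\infty \Ind_{\{t<\uptau\}}\,(\uptau-t)^{k-1}\, dt,
\end{equation*}
and observing that on $\{t<\uptau\}$ the first exit time from $D$ of the process shifted to start at $X_t$ equals $\uptau - t$, the strong Markov property gives, with $a_k \coloneqq \sup_{x\in D} \Exp_x[\uptau^k]$,
\begin{equation*}
\Exp_x[\uptau^k] \;=\; k\int_0^\infty \Exp_x\bigl[\Ind_{\{t<\uptau\}}\,\Exp_{X_t}[\uptau^{k-1}]\bigr]\, dt \;\leq\; k\, a_{k-1}\,\Exp_x[\uptau].
\end{equation*}
Taking the supremum over $x$ and iterating yields $a_k \leq k!\, a_1^k$, the first stated inequality.

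For the second inequality it suffices to bound $a_1$ by a constant $\theta(d,\diam(D))$ that is monotone in $\diam(D)$. Set $R=\diam(D)$ and pick $x_0\in D$ so that $D\subset B_R(x_0)$. Decomposing $X_t = x + B_t + Y_t$ with $B_0=Y_0=0$, and using that $Y$ has cadlag paths and is independent of $B$, choose $M = M(R)$ large enough that $\Prob\bigl(\sup_{t\in[0,1]}|Y_t|\leq M\bigr) \geq \tfrac12$. On the independent event
\begin{equation*}
\mathcal{E} \;\coloneqq\; \Bigl\{\sup_{t\in[0,1]}|Y_t|\leq M\Bigr\}\cap \Bigl\{\sup_{t\in[0,1]}|B_t|\geq 2(R+M)\Bigr\},
\end{equation*}
letting $t^{\ast}\in[0,1]$ be the first time at which $|B_{t^{\ast}}|\geq 2(R+M)$, for every $x\in D$ one has
\begin{equation*}
|X_{t^{\ast}} - x_0| \;\geq\; |B_{t^{\ast}}|-|x-x_0|-|Y_{t^{\ast}}| \;>\; R,
\end{equation*}
so $X$ has exited $D$ by time $1$. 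Hence, writing $p(R) \coloneqq \tfrac12\Prob\bigl(\sup_{t\in[0,1]}|B_t|\geq 2(R+M)\bigr) > 0$, we obtain $\inf_{x\in D}\Prob_x(\uptau \leq 1) \geq p(R)$.

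Iterating the bound via the strong Markov property gives $\sup_{x\in D}\Prob_x(\uptau > n)\leq (1-p(R))^n$, whence
\begin{equation*}
\Exp_x[\uptau] \;\leq\; \sum_{n=0}^\infty(1-p(R))^n \;=\; \frac{1}{p(R)} \;\eqqcolon\; \theta(d,R).
\end{equation*}
Since $p(R)$ decreases as $R$ grows, $\theta$ is monotonically increasing in $\diam(D)$, and combined with the first inequality this completes the plan. The main obstacle is the uniform lower bound $\inf_{x\in D}\Prob_x(\uptau\leq 1) > 0$, which crucially exploits the non-degenerate Brownian component of $X=B+Y$: on a bounded time interval the L\'evy jumps can be controlled with high probability, while $B$ alone spreads far enough to force exit independently of the starting point. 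A purely analytic route via Dynkin's formula with a quadratic barrier would be more delicate, because $\int|y|^2 j(y)\,dy$ may be infinite and so $w(x)=-|x|^2$ is not in the classical domain of $L$; one would need a carefully truncated barrier so that the negative Laplacian term dominates the nonlocal contribution, which the probabilistic argument bypasses.
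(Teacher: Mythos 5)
Your proof of the first inequality is correct and is essentially the standard argument that the paper outsources to \cite[Lemma~3.1]{BL17b}: the identity $\uptau^k=k\int_0^\infty \Ind_{\{t<\uptau\}}(\uptau-t)^{k-1}\,\D t$, the Markov property on $\{t<\uptau\}$ (where $X_t\in D$), and induction give $a_k\le k\,a_{k-1}a_1\le k!\,a_1^k$. No complaints there.

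The second half has a genuine gap: your constant is not of the form $\theta(d,\diam(D))$. The truncation level $M$ is chosen so that $\Prob\bigl(\sup_{t\le 1}|Y_t|\le M\bigr)\ge\tfrac12$, and this $M$ is dictated by the law of $Y$, i.e.\ by the jump kernel $j$; writing $M=M(R)$ is a mislabel, since $M$ has nothing to do with $R$. Consequently $p$, and hence $\theta=1/p$, depends on $j$, so what you actually prove is finiteness of $\sup_{x\in D}\Exp_x[\uptau]$ with a constant depending on $(d,\diam(D),j)$ --- and indeed your argument never uses \hyperlink{A1}{(A1)}, which is a warning sign, because \hyperlink{A1}{(A1)} is precisely the hypothesis that makes a $j$-independent constant possible. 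The paper gets the stated dependence from \cite[Remark~4.8]{Sch98}: the symbol of $X$ is $q(\xi)=|\xi|^2+\psi(\xi)$ with ${\rm Re}\,q(\xi)\ge|\xi|^2$, and \hyperlink{A1}{(A1)} controls ${\rm Im}\,q$ by ${\rm Re}\,q$, so the symbol-based maximal/exit-time estimates yield a Brownian-scale bound $\Prob_x(\uptau_{B_r(x)}>t)\le c_d\,r^{2}/t$ independent of $j$; combined with the same geometric iteration you use, this gives $\sup_{x\in D}\Exp_x[\uptau]\le C_d\,(\diam(D))^{2}$. This uniformity is not cosmetic: it is exactly what lets the constants in \cref{L2.2}, \cref{T1.3} and \cref{C2.1} depend only on $(d,p,\diam(D))$, respectively $(d,\norm{c}_\infty,\diam(D))$, as stated. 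If one only cares about a fixed operator $L$, your bound suffices for everything downstream; but to prove the lemma as stated you must replace the crude pathwise control of $\sup_{t\le1}|Y_t|$ by an estimate that exploits only the Gaussian part of the symbol (Pruitt/Schilling type), with \hyperlink{A1}{(A1)} used to tame the drift/imaginary part.
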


\begin{proof}
Proof follows from \cite[Lemma~3.1]{BL17b}, \hyperlink{A1}{(A1)} and \cite[Remark~4.8]{Sch98}.
\end{proof}

Using \cref{L2.1} we find an ABP type estimate for the semigroup subsolutions. This is the content of our next lemma.
\begin{lemma}\label{L2.2}
Let $D$ be any bounded domain and $u:\Rd\to \RR$ be a bounded 
function satisfying
$$
u(x)\leq \Exp_x[u(X_\uptau)] + \Exp_x\left[\int_0^\uptau f(X_t)\, \D{t}\right]\quad \text{for all}\; x\in D,
$$
with $f \in L^p(D)$, for some $p > \frac{d}{2}$,
where $\uptau$ denotes the first exit time from $D$. Then there exists a constant $C_1 = C_1(p, d, \diam(D))$ such that
$$
\sup_D u^{+} \leq  \sup_{D^c}  u^{+} + C_1\,  \norm{f}_{L^p(D)}.
$$
\end{lemma}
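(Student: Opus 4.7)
The plan is to reduce the inequality to bounding $\sup_{x\in D}\Exp_x\left[\int_0^\uptau f^+(X_t)\,\D{t}\right]$ by a constant multiple of $\norm{f}_{L^p(D)}$, and then to estimate this potential by a H\"older argument that exploits the Gaussian bound on the transition density of $X$ (coming from the Brownian component of $X=B+Y$) together with the exit-time moment estimates from \cref{L2.1}.

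First I would reduce the problem. Since $f\leq f^+$ and $u(X_\uptau)\leq u^+(X_\uptau)\leq \sup_{D^c} u^+$, the hypothesis yields
\[
u(x)\leq \sup_{D^c} u^+ + \Exp_x\left[\int_0^\uptau f^+(X_t)\,\D{t}\right]
\]
for every $x\in D$. Taking positive parts and the supremum in $x$ reduces the claim to showing $\sup_{x\in D}\Exp_x\left[\int_0^\uptau f^+(X_t)\,\D{t}\right]\leq C_1 \norm{f}_{L^p(D)}$. Writing this expectation as $\int_0^\infty\Exp_x[f^+(X_t)\Ind_{t<\uptau}]\,\D{t}$ and applying H\"older's inequality on $(\Omega,\Prob_x)$ with exponents $p$ and $p'=p/(p-1)$ gives
\[
\Exp_x\bigl[f^+(X_t)\Ind_{t<\uptau}\bigr]\leq \Exp_x\bigl[f^+(X_t)^p\bigr]^{1/p}\,\Prob_x(\uptau>t)^{1/p'}.
\]

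To bound the first factor I would use that $X_t=x+B_t+Y_t$ where $B$ is Brownian motion at twice the standard speed, independent of the jump process $Y$. Conditioning on $Y_t$ shows the transition density of $X_t$ satisfies $p_t(x,y)\leq (4\pi t)^{-d/2}$. Extending $f$ by zero outside $D$ therefore gives $\Exp_x[f^+(X_t)^p]^{1/p}\leq (4\pi t)^{-d/(2p)}\,\norm{f}_{L^p(D)}$, so that
\[
\Exp_x\bigl[f^+(X_t)\Ind_{t<\uptau}\bigr]\leq (4\pi t)^{-d/(2p)}\,\norm{f}_{L^p(D)}\,\Prob_x(\uptau>t)^{1/p'}.
\]

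The final step is to integrate this inequality in $t$, splitting at $t=1$. For $t\in(0,1)$ I would use the trivial bound $\Prob_x(\uptau>t)\leq 1$; the singularity $t^{-d/(2p)}$ is integrable near $0$ precisely because the assumption $p>d/2$ forces $d/(2p)<1$. For $t\geq 1$ I would invoke \cref{L2.1}: for any $k\in\NN$ Markov's inequality gives $\Prob_x(\uptau>t)\leq \Exp_x[\uptau^k]/t^k\leq k!\,\theta^k/t^k$, and choosing $k$ with $d/(2p)+k/p'>1$ renders the tail integral convergent, with the overall constant depending only on $p$, $d$, and $\diam(D)$ through $\theta$. The main obstacle is precisely this tail balance: the uniform polynomial moment bound from \cref{L2.1} is what allows the large-$t$ integral to be controlled with no appeal to $\norm{f}_\infty$, while the hypothesis $p>d/2$ is exactly what is needed to absorb the short-time singularity.
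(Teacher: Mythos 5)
Your proposal is correct and follows essentially the same route as the paper's proof: the Gaussian bound $p_t(x,y)\leq (4\pi t)^{-d/2}$ from the Brownian component, H\"older's inequality, a split of the time integral at $t=1$ with $p>d/2$ absorbing the short-time singularity, and the moment bounds of \cref{L2.1} via Markov's inequality for the tail. The only cosmetic difference is that you apply H\"older on the probability space uniformly in $t$ and keep the factor $\Prob_x(\uptau>t)^{1/p'}$ throughout, whereas the paper handles $t\in(0,1)$ by H\"older in the space variable against the density; the resulting bounds are identical.
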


\begin{proof}
For simplicity of notation we extend $f$ by zero outside of $D$. 
From the given condition it is easily seen that
$$ 
u(x) \leq \sup_{D^c}u^{+} + \Exp_x \left[ \int_0^{ \uptau} \vert f(X_s) \vert \D{s} \right], \quad x\in D.
$$
Thus, we only need to estimate the rightmost term in above expression.
Recall that $X=B+Y$ where $B$ is a $d$-dimensional Brownian
motion, running twice as fast as standard Brownian motion, and is independent of $Y$. Let $\nu_t$ be the transition probability of $Y_t$, starting from $0$,
that is,
$$\nu_t(A)=\Prob(Y_t\in A),\quad A\in\cB(\Rd).$$
Let
$$p^B_t(y)=(4\pi t)^{-\nicefrac{d}{2}}e^{-\frac{|y|^2}{4t}} ,$$
be the transition density of $B_t$ starting from $0$. Then the transition
density of $X_t$, starting from $x$, is given by
$$p_t(x, y)= \int_{\Rd} p^B_t(z-x-y)\nu_t(\D{z}),\quad t>0.$$
In particular, for any $t \in ( 0 ,  \infty)$ and $x, y \in \Rd$, we have 
\begin{align}\label{EL2.2A}
p_t(x, y) &= \int_{\Rd} \frac{1}{(4 \pi t)^{\frac{d}{2}}} 
\E^{- \frac{|x-y-z|^2}{4t}} \nu_t(\D{z})  \nonumber
\\
&\leq \frac{1}{(4 \pi t)^{\frac{d}{2}}} \nu_t(\Rd) = \frac{1}{(4 \pi t)^{\frac{d}{2}}} \; . 
\end{align}
Next we write
\begin{align}\label{EL2.2B}
\Exp_x \left[ \int_0^{ \uptau} \vert f(X_s) \vert \D{s} \right] &= \Exp_x \left[ \int_0^{ \infty} \Ind_{ \left\lbrace \uptau > s\right\rbrace } \vert f(X_s) \vert \D{s} \right] \nonumber
\\
&\leq \Exp_x \left[ \int_0^{ 1}  \vert f(X_s) \vert \D{s} \right]  + \Exp_x \left[ \int_1^{ \infty} \Ind_{\left\lbrace \uptau > s\right\rbrace } \vert f(X_s) \vert \D{s} \right].
\end{align}
We estimate the first term on the rhs as 
\begin{align*}
\Exp_x \left[ \int_0^{ 1}  \vert f(X_s) \vert \D{s} \right] &=\int_0^1 \int_{\Rd} \vert f(y) \vert p_s(x, y)\D{y}\, \D{s} \leq \Vert f \Vert_{L^p(D)} \int_0^1  \Vert p_s(x, \cdot)\Vert_{p^{'}} \D{s}
 \\
&\leq \Vert f \Vert_{L^p(D)} \int_0^1 \left[ \int_{\Rd} ( p_s(x,  y))^{p^{\prime}} \D{y} \right]^{\frac{1}{p^{\prime}}} \D{s}
 \\
&\leq \frac{1}{(4\pi)^{\nicefrac{d}{2p}}}\norm{f}_{L^p(D)} \int_0^1 \left[ (s^{- \frac{d}{2}})^{p^{\prime}-1}  \int_{\Rd} ( p_s(x,  y)) \D{y} \right]^{\frac{1}{p^{\prime}}} \D{s}
 \\
&\leq  \frac{1}{(4\pi)^{\nicefrac{d}{2p}}} \norm{f}_{L^p(D)} \int_0^1  s^{-\frac{d}{2p}} \D{s} 
= \frac{1}{(4\pi)^{\nicefrac{d}{2p}}}\frac{2p}{2p -d}   \norm{f}_{L^p(D)},
\end{align*}
where in the third line we use \eqref{EL2.2A} and
$p, p^{\prime}$ are H\"{o}lder conjugates. To deal with the
rightmost term in \eqref{EL2.2B} we choose $k \in N$ with $k > p^{\prime}$.
Using \cref{L2.1} we then calculate
\begin{align*}
 \Exp_x \left[ \int_1^{ \infty} \Ind_{\left\lbrace \uptau > s\right\rbrace } \vert f(X_s) \vert \D{s} \right] &\leq \int_1^{\infty} (\Prob_x (\uptau > s))^{\frac{1}{p^{\prime}}} \Exp_x \left[ \vert f(X_s)^p \vert \right]^{\frac{1}{p}} \D{s}
 \\
 &\leq \frac{1}{(4\pi)^{\nicefrac{d}{2p}}} \norm{f}_{L^p(D)} \int_1^{\infty} (\Prob_x (\uptau > s))^{\frac{1}{p^{\prime}}} \D{s}
 \\
 &\leq \frac{1}{(4\pi)^{\nicefrac{d}{2p}}} \Vert f \Vert_{L^p(D)} \int_1^{\infty} s^{-\frac{k}{p^{\prime}}} \Exp_x \left[ \uptau^k \right]^{\frac{1}{p^{\prime}}} \D{s} 
 \\
 &\leq \frac{1}{(4 \pi)^{\frac{d}{2p}}} \norm{f}_{L^p(D)} 
 \frac{p^{\prime}}{k - p^{\prime}}   (k! \theta^k)^{\frac{1}{p^{\prime}}}.
\end{align*}
Combining these estimates in \eqref{EL2.2B} completes the proof.
\end{proof}

Using \cref{L2.1} and the arguments of \cite[Theorem~3.1]{BL21}
then gives us the ABP maximum principle.

\begin{theorem}[ABP-maximum principle]\label{T2.1}
Assume \hyperlink{A1}{(A1)}. Let $f:D\to \RR$ be continuous and $u\in \cC_b(\Rd)$ be a
viscosity subsolution to 
$$L u = -f\quad \text{in}\; \{u>0\}\cap D, \quad
\text{and}\quad u\leq 0\quad \text{in}\; D^c.$$
Then for every $p>\frac{d}{2}$, there exists a constant $C=C(d, p, \diam(D))$ satisfying
$$\sup_D u \leq C\, \norm{f^+}_{L^p(D)}.$$
\end{theorem}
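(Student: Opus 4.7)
The plan is to reduce \cref{T2.1} to the semigroup estimate \cref{L2.2} via a comparison argument that leverages \cref{T1.1} (stochastic representation) and \cref{T1.2} (viscosity comparison). The subsolution hypothesis $Lu \geq -f$ only holds on the open set $D' \df \{u>0\} \cap D$, so this set will be the natural domain on which to run the comparison.

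First, by \cref{T1.1} applied with right-hand side $f^+$ and zero exterior data, I would produce the unique viscosity solution $v\in\cC_b(\Rd)$ of
\begin{equation*}
Lv = -f^+ \quad \text{in}\; D, \qquad v=0 \quad \text{in}\; D^c.
\end{equation*}
The Feynman--Kac formula \eqref{E1.6} then gives $v(x)=\Exp_x\!\left[\int_0^{\uptau} f^+(X_t)\,\D{t}\right]\geq 0$ for every $x\in D$. If $f$ is not a priori bounded, a preliminary truncation $f^+\wedge k$ together with the monotone convergence theorem applied to the stochastic representation lets one reduce to this case.

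Second, on the open set $D'$ we have $Lu\geq -f\geq -f^+$ in the viscosity sense and $Lv=-f^+$, so the first part of \cref{T1.2} yields $L(u-v)\geq 0$ on $D'$. On $(D')^c=\{u\leq 0\}\cup D^c$ we have $u\leq 0$ and $v\geq 0$, hence $u-v\leq 0$; since $u$ and $v$ are continuous this persists on $\partial D'$. The second part of \cref{T1.2} applied to $u-v$ on $D'$ thus gives $\sup_{D'}(u-v)\leq \sup_{(D')^c}(u-v)\leq 0$, so $u\leq v$ on $D'$. On $D\setminus D'$ one trivially has $u\leq 0\leq v$, so $u\leq v$ throughout $D$.

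Finally, since $v$ satisfies the identity $v(x)=\Exp_x[v(X_\uptau)]+\Exp_x\!\left[\int_0^{\uptau} f^+(X_t)\,\D{t}\right]$ with $v=0$ in $D^c$, \cref{L2.2} applied to $v$ yields $\sup_D v=\sup_D v^+\leq C\norm{f^+}_{L^p(D)}$ for every $p>d/2$, and combining this with $u\leq v$ finishes the proof. The main obstacle is that the subsolution inequality only holds on the open set $D'$ rather than all of $D$; the delicate point is to track the sign of $u-v$ on $\partial D'$ carefully in order to apply the second part of \cref{T1.2} on $D'$ instead of on $D$. A secondary subtlety is the existence of the auxiliary solution $v$ under the weak integrability $f^+\in L^p(D)$, which is handled by approximation as indicated above.
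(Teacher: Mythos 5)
Your strategy is genuinely different from the paper's: the paper proves \cref{T2.1} by invoking the argument of \cite[Theorem~3.1]{BL21} to show that a viscosity subsolution is a \emph{semigroup} subsolution on $\{u>0\}\cap D$, and then applies \cref{L2.2} directly to $u$; you instead build a dominating solution $v$ of $Lv=-f^+$ via \cref{T1.1}, compare $u$ with $v$ on $D'=\{u>0\}\cap D$ via \cref{T1.2}, and apply \cref{L2.2} to $v$. The comparison step itself is sound: $Lu\geq -f\geq -f^+$ and $Lv\leq -f^+$ give $L(u-v)\geq 0$ in $D'$, $u-v\leq 0$ on $(D')^c$ because $v\geq 0$ there, so the maximum principle part of \cref{T1.2} yields $u\leq v$, and \cref{L2.2} bounds $v$ since $v(x)=\Exp_x\bigl[\int_0^{\uptau}f^+(X_t)\,\D{t}\bigr]$ with $v=0$ in $D^c$. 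When $D$ is Lipschitz and $f\in\cC(\bar D)$ this is a complete alternative proof whose merit is that it avoids importing the viscosity-to-semigroup-subsolution argument from the literature.

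The gap is in the construction of $v$ under the hypotheses actually assumed in \cref{T2.1}: $D$ is an arbitrary bounded open set and $f$ is only continuous in $D$, possibly unbounded near $\partial D$, with merely $f^+\in L^p(D)$. \cref{T1.1} requires a bounded Lipschitz domain and a right-hand side in $\cC(\bar D)$, so it does not apply as invoked; even the truncation $f^+\wedge k$ need not extend continuously to $\bar D$, and nothing in your argument addresses the missing regularity of $D$ (inner approximation of $D$ destroys the boundary inequality $u\leq v$, and extending $f^+$ to a larger ball is impossible for $f$ unbounded). The truncation/monotone-convergence remark does not close this either: you cannot run the comparison against the truncated solutions $v_k$, since where $f^+>k$ one only gets $L(u-v_k)\geq -f+(f^+\wedge k)$, which may be negative; so you must first pass to the limit $v=\lim_k v_k$ and then prove that $v$ is a \emph{continuous viscosity supersolution} of $Lv\leq -f^+$. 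Continuity of the probabilistically defined $v$ for $L^p$ data and the needed stability are not supplied by any lemma in the paper (\cref{L5.3} again assumes Lipschitz $D$ and data continuous up to the boundary), and establishing them in this generality essentially amounts to reproving the semigroup-subsolution property that the paper imports from \cite{BL21}.
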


As an application of \cref{T2.1} we obtain a narrow domain maximum principle.

\begin{corollary}[Maximum principle for narrow domains]\label{C2.1}
Assume \hyperlink{A1}{(A1)}. Let $u\in\cC_b(\Rd)$ be a viscosity subsolution to
$$L u + c\, u =0 \quad \text{in}\; D, \quad \text{and}\quad u\leq 0\quad 
\text{in}\; D^c,$$
for some continuous function $c: D\to \RR$. Then there exists a constant
$\varepsilon=\varepsilon(d,\norm{c}_{L^\infty (D)}, \diam(D))$ such that
$u\leq 0$ in $\Rd$, whenever $|D|<\varepsilon$. 
\end{corollary}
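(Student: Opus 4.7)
The plan is to reduce the statement to the ABP maximum principle of Theorem 2.1 by treating $-cu$ as a right-hand side, and then to absorb a term proportional to $\sup_D u^+$ using smallness of $|D|$.

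First I would argue that if $\sup_D u \leq 0$, the conclusion is immediate since $u \leq 0$ on $D^c$ by hypothesis, so we may assume $M \df \sup_D u > 0$. Set $f \df c\, u$; since $u \in \cC_b(\Rd)$ and $c$ is continuous on $D$, so is $f$, and the viscosity subsolution inequality $Lu + cu \geq 0$ in $D$ gives $Lu \geq -f$ in $D$, hence in particular in $\{u>0\}\cap D$. Combined with $u \leq 0$ in $D^c$, the hypotheses of \cref{T2.1} are met, so for any $p>\frac{d}{2}$ there exists $C=C(d,p,\diam(D))$ with
\begin{equation*}
\sup_D u \,\leq\, C\,\norm{f^+}_{L^p(D)}.
\end{equation*}

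Next I would estimate the right-hand side. Since $(cu)^+ \leq \norm{c}_\infty u^+$ pointwise in $D$, and $u^+ \leq M$ on $D$, we obtain
\begin{equation*}
\norm{f^+}_{L^p(D)} \,\leq\, \norm{c}_\infty\,\norm{u^+}_{L^p(D)} \,\leq\, \norm{c}_\infty\,\abs{D}^{1/p}\,M.
\end{equation*}
Combining the two displays yields
\begin{equation*}
M \,\leq\, C\,\norm{c}_\infty\,\abs{D}^{1/p}\,M.
\end{equation*}

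Finally I would choose $\varepsilon>0$ so small that $C\,\norm{c}_\infty\,\varepsilon^{1/p}<1$; this is possible and depends only on $d$, $\norm{c}_\infty$ and $\diam(D)$ (through the constant $C$ and the exponent $p>d/2$, which may be fixed, say, at $p=d$). Then $\abs{D}<\varepsilon$ forces $M\leq 0$, a contradiction to $M>0$, so in fact $\sup_D u \leq 0$, and together with the boundary condition $u \leq 0$ in $D^c$ this gives $u \leq 0$ in $\Rd$. The argument is essentially a direct consequence of \cref{T2.1}, and the only (minor) point to watch is to ensure that $f=cu$ is genuinely continuous on all of $D$ so that \cref{T2.1} applies — this is automatic since $u\in\cC_b(\Rd)$ and $c\in\cC(D)$.
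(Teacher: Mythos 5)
Your overall strategy is exactly the paper's: feed the zeroth-order term into the ABP estimate of \cref{T2.1} and absorb a factor $\abs{D}^{1/p}\sup_D u^+$ by taking $\abs{D}$ small. However, one step as written is false: the pointwise inequality $(cu)^+\leq \norm{c}_\infty u^+$ does not hold on all of $D$. At a point where $c(x)<0$ and $u(x)<0$ one has $(cu)^+(x)=\abs{c(x)}\,\abs{u(x)}>0$ while $u^+(x)=0$. Since the right-hand side of \cref{T2.1} is $\norm{f^+}_{L^p(D)}$ with the norm taken over \emph{all} of $D$ (not just over $\{u>0\}\cap D$), your choice $f=cu$ picks up an uncontrolled contribution from the region $\{u<0\}\cap D$, which is bounded only by $\norm{c}_\infty\norm{u^-}_\infty$ and not by $M=\sup_D u$. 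Consequently the absorption inequality $M\leq C\norm{c}_\infty\abs{D}^{1/p}M$ cannot be derived from your estimate, and the contradiction argument breaks down.

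The gap is easily repaired, and the repair is precisely the paper's one-line proof: since \cref{T2.1} only requires the subsolution inequality on $\{u>0\}\cap D$, you are free to choose a different continuous $f$ there. Take $f=\norm{c}_\infty u^+$ (or $f=c\,u^+$); on $\{u>0\}\cap D$ one has $Lu\geq -cu\geq -\norm{c}_\infty u=-\norm{c}_\infty u^+=-f$ in the viscosity sense, while now $f^+\leq\norm{c}_\infty u^+\leq\norm{c}_\infty M$ holds pointwise on all of $D$, so $\norm{f^+}_{L^p(D)}\leq\norm{c}_\infty\abs{D}^{1/p}M$. With this substitution the rest of your argument (fixing $p$, say $p=d$, and choosing $\varepsilon$ with $C\norm{c}_\infty\varepsilon^{1/p}<1$) goes through verbatim and coincides with the intended proof.
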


\begin{proof}
The result follows from \cref{T2.1} by choosing $f= \norm{c}_{L^\infty (D)} u^+$
in $\{u>0\}$.
\end{proof}

\begin{corollary}\label{C2.2}
Assume \hyperlink{A1}{(A1)} and let $D$ be a 
bounded open set. Let $u, v\in \cC_b(\Rd)$ satisfy
$$ L u + cu \geq f, \quad L v + cv \leq g \quad \text{in}\; D,$$
for some $c, f, g\in \cC(D)$. Also, assume that $c\leq 0$ and $f\geq g$ in
$D$. Then, if $u\leq v$ in $D^c$, we have $u\leq v$ in $\Rd$.
\end{corollary}

\begin{proof}
Using Theorem~\ref{T1.2} we get that
$(L+c)w\geq 0$ in $D$ with $w\leq 0$ in $D^c$
where $w=u-v$. 
Note that, moving $cw$ on the rhs, we can take $f=0$ on $\{w>0\}$ in Theorem~\ref{T2.1}.
Then applying Theorem~\ref{T2.1}, we obtain $w\leq 0$ in $\Rd$. Hence the proof.
\end{proof}

Next we prove the Hopf's lemma. At this point we mention a recent 
interesting work of Klimsiak and Komorowski \cite{KK21} where an 
abstract Hopf's type lemma is obtained for the semigroup solutions of a
general integro-differential operator. 

\begin{theorem}[Hopf's lemma]\label{T2.2}
Let $L u + cu \leq 0$ in $D$ where $u \in \cC_b(\Rd)$ and $c\in\cC_b(\bar{D})$.
Suppose that $u >0$ in $D$ and  non-negative in $\Rd$. Then there exists $ \eta >0 $ such that for any $x_0 \in\partial D$ with $u(x_0)=0$ we have 
$$
\frac{u(x)}{(r - \vert x-z \vert)} \geq \eta ,
$$
for all $x \in B_r(z) \cap B_{\frac{r}{2}}(x_0)$, where $B_r(z) \subset D$ is a ball that touches $\partial D$ at $x_0$.
\end{theorem}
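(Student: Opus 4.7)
The plan is to adapt the classical Hopf barrier argument to the mixed operator $L$, leveraging \cref{T1.2} for the comparison step and relying on the fact that the local Laplacian part of $L$ is strong enough to dominate the nonlocal contributions once we scale the barrier appropriately. After translation, I may assume $z=0$, so that $\overline{B_{r/2}} \subset A := B_r \setminus \overline{B_{r/2}} \cup B_{r/2}$ and $A \subset B_r \subset D$.

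First I would introduce the standard radial barrier $\phi(x) := \bigl(e^{-\alpha|x|^2}-e^{-\alpha r^2}\bigr)\Ind_{B_r}(x)$, with a large parameter $\alpha>0$ to be fixed. Then $\phi\equiv 0$ on $B_r^c$, $0\le \phi \le 1$ on $\Rd$, and $\phi$ is smooth inside $B_r$. A direct computation gives $\Delta\phi(x) = (4\alpha^2|x|^2 - 2\alpha d)\,e^{-\alpha|x|^2}$, so for $\alpha$ large (depending only on $d$ and $r$) one obtains
\[
\Delta\phi(x) \ge c_1\,\alpha^2 r^2 e^{-\alpha|x|^2} \quad \text{for all } x\in A,
\]
for some absolute constant $c_1>0$. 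For the nonlocal part I would split the defining integral into a near-field $\{|y|\le\delta\}$ and a far-field $\{|y|>\delta\}$, with $\delta$ small. On the near-field, provided $x$ is bounded away from $\partial B_r$, a second-order Taylor expansion of $\phi$ at $x$ combined with \eqref{kernel} gives a bound of order $\alpha^2 e^{-\alpha|x|^2}\int_{|y|\le\delta}|y|^2 j(y)\,\D y$; on the far-field, the integrand is bounded in absolute value by $2\|\phi\|_\infty + |\nabla\phi(x)|\,\Ind_{|y|\le 1}|y|$, yielding another bound involving $\int_{|y|>\delta}(1\wedge|y|)j(y)\,\D y$. Adding the local and nonlocal terms together with $c\phi$, I would then fix $\alpha$ large enough that
\[
L\phi(x) + c(x)\phi(x) \ge 0 \quad \text{in } A.
\]

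Having the subsolution property of $\phi$, I would invoke the comparison principle in \cref{T1.2}. By continuity and positivity of $u$ on the compact set $\overline{B_{r/2}}\subset D$, there is $m>0$ with $u\ge m$ there; since $\|\phi\|_\infty \le 1$, any $\varepsilon\in(0,m]$ satisfies $\varepsilon\phi \le u$ on $\overline{B_{r/2}}$. On $B_r^c$ the barrier vanishes and $u$ is non-negative on $D^c$ (a standing convention when proving Hopf, as in \cref{C2.1}), so $\varepsilon\phi \le u$ on $A^c$. The comparison part of \cref{T1.2} applied to the sub/supersolution pair $\varepsilon\phi$ and $u$ on $A$ gives $u \ge \varepsilon\phi$ throughout $A$. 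A direct estimate on $\phi$ near $\partial B_r$ yields $\phi(x) \ge 2\alpha r e^{-\alpha r^2}\,(r-|x|)$ for $x\in A$ close to the outer sphere, which in particular holds for every $x \in B_r \cap B_{r/2}(x_0)$. Combining these inequalities produces the desired bound with $\eta := 2\varepsilon\alpha r e^{-\alpha r^2}$, independent of the specific boundary point $x_0$.

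The main obstacle is controlling the nonlocal integral $I[\phi]$ for points $x\in A$ close to $\partial B_r$: there $\phi$ is only Lipschitz (not $C^2$) across the sphere, so the Taylor argument in the near-field breaks down, and one must either smooth $\phi$ in a thin shell $\{r-\delta<|x|<r+\delta\}$ and pass to the limit, or instead restrict the comparison to $A' = B_{r-\delta}\setminus\overline{B_{r/2}}$ and recover the boundary behaviour by continuity as $\delta\downarrow 0$. A secondary subtlety is ensuring the bound $\eta$ is independent of the particular touching point $x_0\in\partial D$; this is automatic from the construction since only $r$, $d$, $\|c\|_\infty$ and $\min_{\overline{B_{r/2}}} u$ enter, and a uniform interior ball of radius $r/2$ on $\partial D$ can be assumed by the setup of the lemma.
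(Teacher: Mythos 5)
Your overall skeleton is the same as the paper's: an exponential barrier on the annulus $B_r(z)\setminus \overline{B_{r/2}(z)}$, comparison via \cref{T1.2} against $\varepsilon m$ times the barrier with $m=\min_{\overline{B_{r/2}(z)}}u>0$, and a linear-in-distance lower bound on the barrier near $\partial B_r(z)$. The gap is precisely the step you flag as "the main obstacle": the subsolution inequality $L\phi+c\phi\ge 0$ for your truncated barrier $\phi=(e^{-\alpha|x|^2}-e^{-\alpha r^2})\Ind_{B_r}$ is never established at the points that matter, and neither of your proposed repairs works. (a) Restricting the comparison to $B_{r-\delta}\setminus\overline{B_{r/2}}$ requires $u\ge\varepsilon\phi$ on the shell $B_r\setminus B_{r-\delta}$, i.e.\ a lower bound on $u$ of linear order in the distance to $\partial B_r$ there --- which is exactly the conclusion of the lemma, so the argument is circular; "recovering the boundary behaviour by continuity" cannot produce the rate. (b) Smoothing $\phi$ across $\partial B_r$ either makes the barrier strictly positive on a piece of $D^c$ near $x_0$ (where one only knows $u\ge 0$, possibly $u\equiv 0$, so $\varepsilon\phi\le u$ fails on the complement of the comparison set) or makes it vanish before reaching $\partial B_r$, destroying the estimate $u\ge\eta\,(r-|x-z|)$ where it is needed. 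Moreover, even away from $\partial B_r$ your near/far splitting does not close as stated: with fixed $\delta$ the Taylor remainder carries the factor $\sup_{B_\delta(x)}e^{-\alpha|\cdot|^2}\le e^{2\alpha\delta r}e^{-\alpha|x|^2}$, which cannot be beaten by $\Delta\phi\sim\alpha^2e^{-\alpha|x|^2}$ as $\alpha\to\infty$; and your far-field bound $2\|\phi\|_\infty+|\nabla\phi(x)|\,|y|$ loses the weight $e^{-\alpha|x|^2}$ in its first term (note $\|\phi\|_\infty\approx 1$ while $e^{-\alpha|x|^2}\approx e^{-\alpha r^2}$ near the outer sphere), so the negative nonlocal contribution is not dominated by the exponentially small positive local term for large $\alpha$. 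Making this work needs both the weight retained (use $\phi(x+y)\ge0$, $\phi(x)\le e^{-\alpha|x|^2}$) and a coupling of $\delta$ with $\alpha$ --- none of which is in the sketch.

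The paper avoids all of this by not truncating at zero: it takes $v(x)=e^{-\alpha q(x)}-e^{-\alpha r^2}$ with $q(x)=|x-z|^2\wedge 4r^2$, which is $C^2$ on $B_{2r}(z)$ (hence up to $\partial B_r(z)$), positive in $B_r(z)$ and negative outside, and uses convexity of $t\mapsto e^t$ to get, for all $|y|\le 1$, the one-sided bound $v(x+y)-v(x)-y\cdot\nabla v(x)\ge-\alpha e^{-\alpha|x-z|^2}\,|y|^2$, while for $|y|>1$ the integrand is $\ge e^{-\alpha|x-z|^2}(e^{-4\alpha r^2}-1)$. The nonlocal term is thus bounded below at order $\alpha\,e^{-\alpha|x-z|^2}$, which the Laplacian's $4\alpha^2|x-z|^2e^{-\alpha|x-z|^2}$ beats on $\{|x-z|\ge r/2\}$ for $\alpha$ large, uniformly up to $\partial B_r(z)$ and using only \eqref{kernel}; the negativity of $v$ outside $B_r(z)$ costs nothing in the comparison step since $u>0$ in $D$ and $u\ge0$ in $D^c$ (a sign convention both arguments need). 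If you want to keep your truncated $\phi$, the quickest repair is to observe $\phi\ge v$ on $\Rd$ with $\phi=v$ near every point of the open annulus, so $L\phi\ge Lv$ there and the same computation applies; as written, however, your proposal leaves the key inequality unproved exactly where Hopf's lemma is decided.
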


\begin{proof}
 Since $u>0$ in $D$, without any loss of generality, we may assume that $c\leq 0$.
Let $K = B_r (z) \cap B_{\frac{r}{2}}(x_0)$ and define 
$$
v(x) = \E^{- \alpha q(x)} - \E^{- \alpha r^2} ,
$$
where $q(x) = \vert z - x \vert^2 \wedge 9r^2$. Clearly $v > 0$ in $B_r(z)$, $ v(x)= 0$ on $\partial B_r(z)$,  and $v \leq 0$ in $\Rd \setminus B_{r}(z)$.  For $x \in B_{2r}(z)$ we have
$$
\Delta v = \alpha \E^{- \alpha \vert x - z \vert^2} \left( 4\alpha \vert x - z \vert^2 - 2d \right).
$$
Fix any $x\in B_{2r}(z)$. Using the convexity of $x\mapsto \E^{x}$ we first note that, for $|y|\leq 1$,
\begin{align*}
& v(x+y)-v(x)-\Ind_{\{|y|\leq 1\}}y\cdot \grad v(x)
\\
&\qquad =\E^{-\alpha|x+y-z|^2}-\E^{-\alpha|x-z|^2}+2\alpha\Ind_{\{|y|\leq 1\}} y\cdot (x-z) \E^{-\alpha|x-z|^2}
\\
&\qquad\geq -\alpha \E^{-\alpha|x-z|^2}\left(|x+y-z|^2-|x-z|^2-2\Ind_{\{|y|\leq 1\}}
y\cdot (x-z)\right)  =   -\alpha \E^{-\alpha|x-z|^2} |y|^2 .
\end{align*}
Therefore, for $x\in B_{2r}(z)$, we have
\begin{align*}
&\int_{\Rd} (v(x+y)-v(x)-\Ind_{\{|y|\leq 1\}}y\cdot \grad v(x)) j(y)\D{y}
\\
&\qquad  =  \int_{|y| \leq 1} (v(x+y)-v(x)-\Ind_{\{|y|\leq 1\}}y\cdot \grad v(x)) j(y)\D{y} + \int_{|y|>1}  (v(x+y) - v(x))  j(y) \D{y}
\\
%&\geq -\alpha \E^{-\alpha|x-z|^2}\int_{|y|\leq 1} \left( q(x+y) - %q(x) - y  \cdot \nabla q(x) \Ind_{|y| \leq 1} \right) j(y)\, \D{y}
%+ \int_{|y|>1}  (v(x+y) - v(x))  j(y) \D{y}
%\\
%&= -\alpha \E^{-\alpha|x-z|^2}  \int_{|y|\leq 1}  \left(|x+y-z|^2-|x-%z|^2-2 y\cdot (x-z)\right)   j(y) \D{y}+ \int_{|y|>1}  (v(x+y) - v(x))  %j(y) \D{y}
%\\
&\qquad \geq -\alpha \E^{-\alpha|x-z|^2} \int_{|y|\leq 1} |y|^2 j(y)\, \D{y} + \int_{|y|>1}  (v(x+y) - v(x))  j(y) \D{y}
\\
&\qquad \geq -\alpha \E^{-\alpha|x-z|^2} \int_{|y|\leq 1} |y|^2 j(y)\, \D{y}
+ \E^{-\alpha|x-z|^2} \int_{|y|>1} \left(\E^{-9\alpha r^2}-1\right)j(y)\, \D{y},
\end{align*}
where in the last line we used $|x-z+y|^2\wedge 9r^2\leq |x-z|^2
+ 9r^2$.
Thus, using \eqref{kernel} and $x \in B_{2r}(z)$, we obtain
\begin{align*}
&L v(x) + c(x) v(x)  
\\
&\quad \geq \alpha \E^{- \alpha \vert x-z \vert^2} \Bigl[ 4 \alpha \vert x-z \vert^2 - 2d - \int_{|y|\leq 1}|y|^2 j(y) \D{y} 
+ \alpha^{-1} \int_{|y|>1} \left(\E^{-9\alpha r^2}-1\right)j(y)\, \D{y} 
\\
&\hspace{4em} -\norm{c}_{L^\infty (D)} \alpha^{-1} ( 1 - \E^{- \alpha ( r^2 - \vert x-z \vert^2)}) \Bigr] .
\end{align*}
For $|x-z|\geq r/2$, we can choose $\alpha$ large enough so that
\begin{equation}\label{ET2.3A}
L v + c v>0 \quad \text{for}\;\; \frac{r}{2}\leq |x-z|<2r.
\end{equation}
Let $m = \min_{D^-_{\frac{r}{2}}}u$ where 
$D^-_{\frac{r}{2}}=\{y\in D\; :\; \dist(y, \partial D) \geq \frac{r}{2}\}$.
Defining $ w =  m v$,   we have $L(w-u) + c(w-u) \geq 0$ in $B_{r}(z) \setminus B_{r/2}(z)$, by  \cref{T1.2}, and $w-u \leq 0$ in $(B_{r}(z) \setminus B_{r/2}(z))^c$.  Thus as a consequence of Corollary~\ref{C2.2},  we obtain
 %and applying comparison principle \cref{T1.2} we obtain
\begin{align*}
u &\geq  m v 
\\
 &= m \E^{-\alpha r^2} (\E^{\alpha (r^2 - \vert x-z \vert^2)} -1)
 \\
&\geq m \E^{-\alpha r^2} \alpha (r^2 - \vert x-z \vert^2) .
\end{align*}
This completes the proof.
\end{proof}

As a by-product of the proof of Hopf's lemma above we obtain a strong maximum principle (compare with Ciomaga \cite{Ciomaga}).
\begin{theorem}[Strong maximum principle]\label{T2.3}
Assume \hyperlink{A1}{(A1)} and let $c\in\cC_b(D)$.
Let $L u + cu \leq 0$ in $D$ and $u \in \cC_b(\Rd)$ be non-negative in $\Rd$. Then either $u>0$ in $D$ or it is identically $0$ in $D$.
\end{theorem}
\begin{proof}
Since $u$ is non-negative,  without any loss of generality,  we may assume that $c\leq 0$.  If we take $K= \overline{ \{ u=0\} \cap D}$, then we want to show that $K \cap D$ is either empty or 
$D$.  Suppose, to the contrary, $K \cap D $ is non-empty and is not equal to the set $D$.   This means  $D \setminus K$ is also non-empty.   Hence we can find a point 
$z \in D\setminus K$ and $r$ small,  such that $x_0 \in \partial B_r(z)$ for some $x_0 \in K \cap D$,  $B_{2r}(z) \subset D$ and $u>0$ in $B_r(z)$.  \\
Now we consider the function $v$ that we constructed in \cref{T2.2}, that is 
$$
v(x) = \E^{- \alpha q(x)} - \E^{- \alpha r^2} ,
$$
where $q(x) = \vert z - x \vert^2 \wedge 9r^2$.  Again we have $v > 0$ in $B_r(z)$, $ v(x)= 0$ on $\partial B_r(z)$,  and $v \leq 0$ in $\Rd \setminus B_{r}(z)$.  Also by \eqref{ET2.3A} we have
\begin{equation}\label{ET2.3B}
L v + c v>0 \quad \text{in}\;\; B_{2r}(z) \setminus B_{r/2}(z).
\end{equation}
Let $m = \min_{B_{\frac{r}{2}}(z)}u$ and
define $ w =  m v$.  Then following similar argument as in \cref{T2.2} we have $u \geq w$ in $\Rd$.  Since $w \in C^2(B_{2r}(z))$ and $w(x_0) = u(x_0) =0$,  we use $w$ as a test function and define
\[
\phi(y):=\left\{
\begin{array}{ll}
w(y) & \text{for}\; y\in B_{r}(x_0),
\\[2mm]
u(y) & \text{otherwise}.
\end{array}
\right.
\] 
Then by the definition of viscosity supersolution we have 
$$L\phi(x_0) + c(x_0) \phi(x_0) \leq 0 .$$
Clearly, $w \leq \phi $ in $\Rd$, $\grad \phi(x_0) = \grad w(x_0) $ and  $\Delta \phi(x_0) = \Delta w(x_0) $. Thus  
$Lw(x_0) + c(x_0) w(x_0) \leq 0$ which contradicts the fact that $Lw(x_0) + c(x_0) w(x_0) > 0$ (see \eqref{ET2.3B}). Hence $K \cap D$ is either empty or $D$.   This completes the proof.
\end{proof}

%%%%%%%%%%%%%%%%%%%%%%%%%%%%%%%%%%%%%%%%%%%%%%%%%%%%%%%%%%%%%%%%%%%%%%%
\section{Generalized principal eigenvalue and Faber-Krahn inequality}\label{S-FK}
In this section we study the generalized eigenvalue problem of $L$
(\cref{T3.1}) and 
then we establish a Faber-Krahn inequality (\cref{T3.2}).
We begin with the following boundary estimate which will be useful.
\begin{lemma}\label{L3.1}
Assume \hyperlink{A1}{(A1)}.
Let $D$ be a bounded domain satisfying an uniform exterior sphere
condition with radius $r>0$.  Let $u\in\cC_b(\Rd)$ be a viscosity solution to 
$$
L u = f \quad in \quad D, \quad \quad u = 0 \quad in \quad D^c,
$$
for some $f\in L^\infty(D)$.
Then there exists a constant $C$, dependent on $r, d, \diam(D)$, satisfying
$$
\abs{u(x)} \leq C  \norm{f}_{L^\infty(D)}\, \dist(x , \partial D)
\quad \text{for}\; x\in D.
$$
\end{lemma}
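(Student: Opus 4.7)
The plan is a classical barrier argument combined with the stochastic representation. By \cref{T1.1} (applied with $g\equiv 0$, and noting that our equation $Lu=f$ corresponds to taking $-f$ in the notation of that theorem), one has $u(x)=-\Exp_x\bigl[\int_0^{\uptau} f(X_t)\,\D t\bigr]$, so that $\abs{u(x)}\leq \norm{f}_\infty\, w(x)$ where $w(x)\df \Exp_x[\uptau_D]$. Again by \cref{T1.1}, $w$ is the viscosity solution of $Lw=-1$ in $D$ with $w\equiv 0$ on $D^c$, and \cref{T2.1} gives the a priori bound $\norm{w}_\infty\leq C_0=C_0(d,\diam(D))$. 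It therefore suffices to establish the linear boundary decay $w(x)\leq C\dist(x,\partial D)$, from which $\abs{u(x)}\leq C\norm{f}_\infty\dist(x,\partial D)$ follows.

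Fix $x_0\in\partial D$ and, using the uniform exterior sphere condition, pick $z$ with $\abs{x_0-z}=r$ and $B_r(z)\subset D^c$. Mimicking the barrier used in the proof of \cref{T2.2}, I consider
\[
\phi(x)\df A\bigl(\E^{-\alpha r^2}-\E^{-\alpha q(x)}\bigr),\qquad q(x)\df \abs{x-z}^2\wedge K^2,
\]
where $K>r+\diam(D)$ is chosen so that $q(x)=\abs{x-z}^2$ throughout $D$, and $A,\alpha>0$ are free parameters. Then $\phi(x_0)=0$, $\phi\geq 0$ on $\{\abs{x-z}\geq r\}\supset D$, and a Taylor expansion at $x_0$ yields $\phi(x)\leq 2A\alpha r \E^{-\alpha r^2}(\abs{x-z}-r)$ for $x\in D$ close to $x_0$; by the tangency of $B_r(z)$ to $\partial D$ at $x_0$ together with the uniform exterior sphere condition, $\abs{x-z}-r\leq C_r \dist(x,\partial D)$ on such a neighborhood. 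To verify $L\phi\leq -1$ on $D$, I compute $\Delta\phi(x)=-A\E^{-\alpha\abs{x-z}^2}\bigl(4\alpha^2\abs{x-z}^2-2d\alpha\bigr)\leq -c_1 A\alpha^2\E^{-\alpha K^2}$ for $\abs{x-z}\geq r$ and $\alpha$ large, while the same convexity trick used in the proof of \cref{T2.2}, together with \eqref{kernel}, bounds the non-local contribution from above by $c_2 A\alpha \E^{-\alpha r^2}$ with $c_2=c_2(d,j)$. Choosing $\alpha$ large enough and then $A$ large enough that also $\phi\geq \norm{w}_\infty$ outside a neighborhood of $x_0$ (using the ABP bound from \cref{T2.1}), \cref{T1.2} delivers $w\leq \phi$ on $D$, which gives the desired local linear decay at $x_0$; repeating across all boundary points and combining with the uniform bound $\norm{w}_\infty\leq C_0$ far from the boundary yields the claim.

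The principal technical obstacle is that $\phi<0$ inside $B_r(z)\subset D^c$, whereas \cref{T1.2} requires $w-\phi\leq 0$ on \emph{all} of $D^c$. My plan is to replace $\phi$ by $\phi^+$ and to estimate the extra nonlocal term
\[
\int_{x+y\in B_r(z)}\abs{\phi(x+y)}\,j(y)\,\D y
\]
produced in $L\phi^+(x)$ for $x\in D$; since the integration set lies in $\{\abs{y}\geq \dist(x,\partial B_r(z))\}\geq\{\abs{y}\geq \dist(x,\partial D)\}$, which is bounded away from the origin for $x$ in the interior of $D$, this term is finite and controlled by $\norm{\phi}_\infty\,\int_{\abs{y}\geq\dist(x,\partial D)}j(y)\,\D y$, a quantity that is dominated by the strictly negative local part $\Delta\phi$ after a slightly more delicate tuning of $\alpha$ near the boundary. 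An alternative is to perform the comparison on shrunken annular subdomains $\{\abs{x-z}>r+\varepsilon\}\cap D$ and pass to the limit $\varepsilon\downarrow 0$ using continuity of $w$ and $\phi^+$; in either case the required constant depends only on $r,d,\diam(D)$ and the kernel $j$, as claimed.
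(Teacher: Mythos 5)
Your overall strategy is the same as the paper's (bound $\abs{u}$ by $\norm{f}_\infty$ times a torsion-type function, then prove linear decay at each exterior-sphere contact point by a barrier plus the comparison principle \cref{T1.2}), but the step you yourself flag as the ``principal technical obstacle'' is a genuine gap, and your proposed fixes do not close it. Once you truncate to $\phi^+$ so that the exterior comparison on $D^c$ becomes admissible, you get for $x\in D$
\begin{equation*}
L\phi^+(x)=L\phi(x)+\int_{x+y\in B_r(z)}\abs{\phi(x+y)}\,j(y)\,\D y ,
\end{equation*}
and the extra term is \emph{not} dominated by the local part near the contact point: for $j(y)\asymp \abs{y}^{-d-2s}$ with $s\in(\nicefrac12,1)$ and $x$ at distance $t=\abs{x-z}-r$ from the exterior ball, the points of $B_r(z)$ at distance $\approx 2t$ from $x$ have measure $\asymp t^d$ and there $\abs{\phi}\asymp A\alpha r\E^{-\alpha r^2}t$, so the extra term is $\gtrsim A\alpha r\E^{-\alpha r^2}\,t^{1-2s}\to\infty$ as $t\downarrow 0$, while $\abs{\Delta\phi}$ and the remaining nonlocal contribution stay bounded (of size $\asymp A\alpha^2\E^{-\alpha r^2}$). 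Hence $L\phi^+\leq -1$ fails in every neighbourhood of $x_0$, no matter how $\alpha$ and $A$ are tuned; the kernel is only assumed to satisfy \eqref{kernel}, so this case cannot be excluded. Your alternative of comparing on the shrunken domains $\{\abs{x-z}>r+\varepsilon\}\cap D$ is circular: the exterior data for that comparison includes the collar $\{r<\abs{x-z}\leq r+\varepsilon\}\cap D$, exactly where the decay of $u$ (or $w$) is what you are trying to prove. There is also a quantitative slip earlier: comparing your stated bounds $-c_1A\alpha^2\E^{-\alpha K^2}$ (local) and $+c_2A\alpha\E^{-\alpha r^2}$ (nonlocal), taking $\alpha$ large makes the \emph{positive} bound dominate; this particular point is repairable by comparing the two contributions pointwise, since both carry the common factor $\E^{-\alpha\abs{x-z}^2}$, and then taking $A$ of size $\E^{\alpha K^2}$, but it does not help with the truncation problem above.

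The paper's proof sidesteps exactly this difficulty by not constructing the barrier by hand: it invokes \cite[Lemma~5.4]{Mou19}, which supplies a Lipschitz function $\varphi$ (with Lipschitz constant $r^{-1}$) vanishing identically on the whole closed ball $\bar B_r$ (so it is never negative and no truncation is needed), satisfying $L\varphi\leq-1$ only in the annular region $B_{(1+\delta)r}$, and bounded below by $\varepsilon>0$ outside $B_{(1+\delta)r}$. The comparison is then performed only on $B_{(1+\delta)r}(z)\cap D$, the rest of $D$ being handled by the global bound $\abs{u}\leq\norm{f}_\infty\norm{v}_\infty$ (obtained from $v(x)=\Exp_x[\uptau_B]$ on an enclosing ball $B$, which also avoids applying \cref{T1.1} on the possibly non-Lipschitz $D$, as your use of the representation for $u$ and for $\Exp_x[\uptau_D]$ implicitly does), and the linear decay then follows from the Lipschitz bound on $\varphi$ together with $\varphi=0$ on $\bar B_r$. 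Producing such a barrier for a general L\'evy kernel satisfying only \eqref{kernel} is precisely the content of the cited lemma; your exponential bump with a cut-off does not have this property, so as written the proof does not go through.
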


\begin{proof}
Let $B$ be ball containing $D$. Then $v(x)=\Exp_x[\uptau_B]$ solves 
(see \cref{T1.1})
$$L u =-1\quad \text{in}\; B, \quad u=0\quad \text{in}\; B^c.$$
Applying comparison principle, \cref{T1.2}, it then follows that
\begin{equation}\label{EL3.1A}
|u(x)|\leq \norm{f}_{L^\infty(D)} v(x),\quad x\in\Rd.
\end{equation}
Without any loss of generality we may assume $r\in (0, 1)$.
By \cite[Lemma~5.4]{Mou19} there exists a bounded, Lipschitz continuous function
$\varphi$, with Lipschitz constant $r^{-1}$, satisfying
$$
\begin{cases}
\varphi = 0, & \quad \text{in} \quad \bar{B}_r , \\
\varphi > 0, & \quad \text{in} \quad \bar{B}_r^c , \\
\varphi \geq \varepsilon, & \quad \text{in} \quad  B_{(1+\delta)r}^c   , \\
L \varphi \leq -1 , & \quad \text{in} \quad B_{(1+\delta)r},
\end{cases}
$$
for some constant $\varepsilon, \delta$, where $B_r$ denotes the ball of
radius $r$ around $0$.
 Now for any point $y\in\partial D$
we can find another point $z\in D^c$ such that $\overline{B_r(z)}$ touches
$\partial D$ at $y$. Defining 
$w(x)=\varepsilon^{-1}\norm{f}_{L^\infty(D)} \norm{v}_{L^\infty (D)} \varphi(x-z)$ and using \eqref{EL3.1A}
it follows that $|u(x)|\leq w(x)$ in $B_{(1+\delta) r}(z)\cap D$, by
comparison principle. This relation holds for any $y\in \partial D$.
Now for any point $x\in D$ with $\dist(x, \partial D)<\delta r$ we can find
$y\in \partial D$ satisfying $\dist(x, \partial D)=\abs{x-y}<\delta r$. By previous estimate
we obtain
\begin{align*}
|u(x)| &\leq \varepsilon^{-1} \norm{f}_{L^\infty (D)} \norm{v}_{L^\infty (D)} \varphi(x-z)
\leq \varepsilon^{-1} \norm{f}_{L^\infty (D)} \norm{v}_{L^\infty (D)} (\varphi(x-z)-
\varphi(y-z)) \\
&\leq \varepsilon^{-1} \norm{f}_{L^\infty (D)}  \norm{v}_{L^\infty (D)} \,
r^{-1}\dist(x, \partial D).
\end{align*}
Now the proof follows from \eqref{EL3.1A}.
\end{proof}

Now fix a Lipschitz domain $D$ satisfying a uniform exterior sphere
condition.
In view of \cref{T1.1} we can define a map 
$\cT: \cC(\bar{D})\to \cC_0(D)$(the space of continuous function in $\bar{D}$ vanishing on the boundary) as
follows: $\cT[f]=u$ where $u$ is the unique viscosity solution to 
$$L u= -f \quad \text{in}\;  D, \quad \text{and}\quad
u=0\quad \text{in}\quad D^c.$$

\begin{lemma}\label{L3.2}
Under \hyperlink{A1}{(A1)},
the map $\cT$ is a bounded linear, compact operator.
\end{lemma}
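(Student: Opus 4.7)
The plan is to verify, in order, linearity, boundedness, $\cT[f]\in\cC_0(D)$, and compactness via Arzel\`a--Ascoli. Linearity is immediate from the uniqueness assertion in \cref{T1.1} together with the linearity of $L$: if $u_i$ is the viscosity solution of $Lu_i=-f_i$ in $D$ with zero exterior data, then $\alpha u_1+\beta u_2$ solves $Lu=-(\alpha f_1+\beta f_2)$ with zero exterior data and so equals $\cT[\alpha f_1+\beta f_2]$. Boundedness follows from the stochastic representation \eqref{E1.6} with $g\equiv 0$: for $x\in D$,
\begin{equation*}
\abs{\cT[f](x)}=\Babs{\Exp_x\Bigl[\int_0^\uptau f(X_t)\,\D t\Bigr]}\leq \norm{f}_\infty\Exp_x[\uptau]\leq \theta\,\norm{f}_\infty,
\end{equation*}
where $\theta=\theta(d,\diam(D))$ is the constant from \cref{L2.1} applied with $k=1$. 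Hence $\norm{\cT}\leq\theta$.

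That $\cT[f]\in\cC_0(D)$ combines \cref{T1.1} (which gives $\cT[f]\in\cC_b(\Rd)$ with $\cT[f]\equiv 0$ on $D^c$) with \cref{L3.1}, which yields $\abs{\cT[f](x)}\leq C\norm{f}_\infty\dist(x,\partial D)$; in particular $\cT[f]$ extends continuously by $0$ to $\partial D$.

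For compactness, pick a sequence $\{f_n\}\subset\cC(\bar D)$ with $\norm{f_n}_\infty\leq M$ and set $u_n\df\cT[f_n]$. The family is uniformly bounded on $\bar D$ by $\theta M$, so by Arzel\`a--Ascoli it suffices to produce a uniform modulus of continuity on $\bar D$. Near $\partial D$ the bound $\abs{u_n(x)}\leq CM\dist(x,\partial D)$ from \cref{L3.1} already supplies one. For the interior, fix $K\Subset D$; one needs an estimate of the form $[u_n]_{\cC^\alpha(K)}\leq C_K'M$ for some $\alpha\in(0,1)$ and a constant independent of $n$. This is the step that carries the real content: it relies on an interior H\"older estimate for bounded viscosity solutions of $Lv=g$ with $g$ continuous and bounded, which in turn exploits the non-degeneracy of $L$ coming from the Laplacian part (the nonlocal piece $\cI$ contributes only a term controlled by $\int(1\wedge\abs{y}^2)j(y)\,\D y$ and $\norm{u_n}_\infty$, so heuristically $\Delta u_n=-f_n-\cI u_n$ has a bounded right-hand side and one can appeal to Krylov--Safonov/Caffarelli--Silvestre-type interior regularity for mixed operators). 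A standard finite-cover argument then glues the two moduli into a uniform modulus on $\bar D$, and Arzel\`a--Ascoli extracts a uniformly convergent subsequence whose limit lies in $\cC_0(D)$.

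The main obstacle is the interior H\"older estimate: everything else (linearity, the boundedness bound via \cref{L2.1}, and the boundary decay from \cref{L3.1}) is immediate from results established earlier. Once that uniform interior regularity is granted, the Arzel\`a--Ascoli step is routine and yields compactness of $\cT$.
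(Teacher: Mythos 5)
Your proposal is correct and follows essentially the same route as the paper: boundedness and the boundary modulus via \cref{L3.1}, an interior H\"older estimate with constants depending only on $\norm{f}_\infty$, and Arzel\`a--Ascoli. The interior estimate you flag as the real content and leave ``granted'' is exactly what the paper supplies by citing \cite[Theorem~4.2]{Mou19}, so no new argument is needed beyond that reference.
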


\begin{proof}
It is evident that $\cT$ is a bounded linear operator. So we only need to show that $\cT$ is compact. Let $\cK$ be a bounded subset of $\cC(\bar{D})$,
that is, there exists a constant $\kappa$ such that $\norm{f}_\infty\leq \kappa$ for all $f\in\cK$. Define $\cG=\{u\; :\; u=\cT[f]\; \text{for some}\; f\in\cK\}$. By \cref{L3.1}, $\cG$ is bounded in $\cC_0(D)$. Let us now
show that $\cG$ is also equicontinuous. Consider $\varepsilon>0$.
For $\delta_1>0$ let us define
$$D^-_{\delta_1}=\{x\in D\; :\; \dist(x, \partial D)>\delta_1\}.$$
Using \cref{L3.1}, we can then choose $\delta_1>0$ small enough 
to satisfy
\begin{equation}\label{EL3.2A}
\sup_{D\setminus D^-_{2\delta_1}}\abs{u(x)}<\varepsilon/2
\quad \text{for all}\; u\in\cG.
\end{equation}
Again, by \cite[Theorem~4.2]{Mou19}, there exists $\alpha>0$ satisfying
\begin{equation}\label{EL3.2B}
\sup_{x\neq y, x, y\in D^{-}_{\delta_1}}\frac{|u(x)-u(y)|}{|x-y|^\alpha}
\leq \kappa_2\quad \text{for all}\; u\in\cG,
\end{equation}
for some constant $\kappa_2$. Choose $\delta\in (0, \delta_1)$ satisfying
$\kappa_2\delta^\alpha<\varepsilon$. Then, from \cref{EL3.2A,EL3.2B}, we obtain
$$|u(x)-u(y)|<\varepsilon\quad   \text{for all} \; u\in \cG, \; x, y\in D, \; |x-y| \leq \delta . $$
Therefore $\cG$ is equicountinuous.
Hence by Arzel\`{a}-Ascoli theorem we have $\cG$ compact, completing the
proof.
\end{proof}

%To this end we introduce the L\'evy measure $\upmu$ associated to the 
%nonlocal kernel $j$ as follows:
%$$\upmu(B)=\int_{B} j(y) \D{y}.$$
%We make the following assumption on $\upmu$.
%\begin{itemize}
%\item[(\hypertarget{A3}{\bf A3})]
% Let $\sA$ be the support of $\upmu$ and for any 
%$x\in D$ we have $\bar{D}=\overline{\cup_{n\geq 0} A_n}$ where
%$$A_0=\{x\}, \quad A_{n+1}=\cup_{z\in A_n} [(z+ \sA)\cap D].$$
%\end{itemize}
%Note that \hyperlink{A2}{(A2)} implies \hyperlink{A3}{(A3)}.
From \cref{L3.2} and \cref{C2.2} we get the following existence result.
\begin{lemma}\label{L3.3}
Grant \hyperlink{A1}{(A1)}. Let $D$ be a bounded Lipschitz domain satisfying a uniform exterior sphere condition. Suppose $c\in\cC(\bar{D})$ with $c\leq 0$. Then
for any $f\in \cC(\bar{D})$ there exists a unique solution $u$ to
\begin{equation}\label{EL3.3A}
L u + cu = -f\quad \text{in}\; D, \quad \text{and}\quad u=0\quad \text{in}\;
D^c.
\end{equation}
\end{lemma}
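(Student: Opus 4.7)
The plan is to recast \eqref{EL3.3A} as a linear operator equation on the Banach space $X \df \{v \in \cC(\bar{D}) : v|_{\partial D} = 0\}$ and to invoke the Fredholm alternative. A function $u \in X$ is a viscosity solution of \eqref{EL3.3A} if and only if $u = \cT[f + c u]$, because $f + c u \in \cC(\bar{D})$ and, by \cref{T1.1} together with \cref{L3.2}, $\cT[f + c u]$ is characterized as the unique viscosity solution of $L v = -(f + c u)$ with zero exterior data. Setting $K v \df \cT[c v]$, the equation thus becomes $(I - K) u = \cT[f]$. Multiplication by $c$ is a bounded linear map from $X$ into $\cC(\bar{D})$, and $\cT \colon \cC(\bar{D}) \to X$ is compact by \cref{L3.2}, so $K$ is a compact linear operator on $X$.

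By the Fredholm alternative it then suffices to show $\ker(I - K) = \{0\}$. Suppose $u \in X$ satisfies $u = K u$, so that $L u + c u = 0$ in $D$ in the viscosity sense with $u \equiv 0$ in $D^c$. On the open set $\{u > 0\} \cap D$ we have $L u = -c u \ge 0$ because $c \le 0$; hence $u$ is a viscosity subsolution of $L u = 0$ on $\{u > 0\} \cap D$ with $u \le 0$ in $D^c$. Applying the ABP maximum principle \cref{T2.1} with vanishing source gives $\sup_D u \le 0$. The same argument applied to $-u$, which also satisfies $L(-u) + c(-u) = 0$ with zero exterior data, yields $u \ge 0$ in $D$. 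Therefore $u \equiv 0$, so $I - K$ is a linear homeomorphism of $X$ and $u \df (I - K)^{-1} \cT[f]$ is the desired viscosity solution.

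Uniqueness follows either directly from the bijectivity of $I - K$, or from a separate application of the same ABP argument: if $u_1, u_2$ are two viscosity solutions, then by \cref{T1.2} applied to the continuous right-hand sides $-f - c u_i$, the difference $w \df u_1 - u_2$ satisfies $L w + c w \ge 0$ in $D$ with $w \equiv 0$ in $D^c$, and running the ABP argument on $\{w > 0\}$ and on $\{-w > 0\}$ forces $w \equiv 0$. The single place where the sign hypothesis $c \le 0$ is essential—and in my view the only real obstacle—is the triviality of $\ker(I - K)$: non-positivity of $c$ is precisely what converts $L u = -c u$ on $\{u > 0\}$ into a subsolution inequality with zero right-hand side, so that ABP applies with no source term. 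Every other step is a formal consequence of the linear theory developed in \cref{T1.1,T1.2,L3.2}.
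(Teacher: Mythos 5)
Your proof is correct, but it takes a genuinely different route from the paper's. The paper proves \cref{L3.3} by a Leray--Schauder (Schaefer-type) fixed point argument for $F(v)=\cT[f+cv]$: the a priori bound on $\{v:v=\lambda F(v)\}$ is obtained by contradiction, normalizing $w_n=v_n/\norm{v_n}_\infty$, passing to a limit $w$ via Arzel\`a--Ascoli and viscosity stability, and then showing $w\equiv 0$ through a propagation argument at an interior minimum that uses the chain condition \hyperlink{A3}{(A3)} on the support of the L\'evy measure; uniqueness is then said to follow from the same argument. You instead exploit linearity fully: with $K=\cT[c\,\cdot]$ compact on $\cC_0(D)$ (by \cref{L3.2}), the Riesz--Schauder/Fredholm alternative reduces everything to triviality of $\ker(I-K)$, which you obtain from the ABP estimate \cref{T2.1} (freezing the zeroth-order term and using $c\le 0$ so that the source vanishes on $\{u>0\}$), and the same ABP argument, via \cref{T1.2}, also gives uniqueness directly. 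The equivalence $u$ solves \cref{EL3.3A} iff $u=\cT[f+cu]$ is legitimate for continuous $u$ by the uniqueness in \cref{T1.1}, so there is no gap there. What your route buys is a shorter proof that never invokes \hyperlink{A3}{(A3)} (only \hyperlink{A1}{(A1)} through \cref{T2.1} and \cref{L3.2}), avoids the normalization/blow-up step, and yields existence and uniqueness simultaneously from bijectivity of $I-K$; what the paper's route buys is a self-contained maximum-principle-style propagation argument in the spirit of what is needed later for the Kre{\u\i}n--Rutman positivity analysis in \cref{T3.1}, where \hyperlink{A3}{(A3)} is used in any case.
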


\begin{proof}
Fixing $f \in \cC(\bar{D})$, we define a map $F : \cC_0(D)\rightarrow \cC_0(D)$ 
as
$$
u=F(v) = \cT[f+cv] ,
$$
where $\cT$ is same as in \cref{L3.2}. From \cref{L3.2} it follows that
$F$ is continuous and compact.
Consider a set
$$
E = \left\lbrace v \in \cC_0(D) : v = \lambda F(v) \quad \text{for some}\; \lambda \in \left[ 0 , 1 \right] \right\rbrace.
$$

Claim: $E$ is bounded in $\cC_0(D)$.

Suppose, to the contrary, that $E$ is not bounded.
Then there exist $v_n\in E$, for $n \in \NN$,  such that
$\norm{v_n}_{L^\infty (D)} \rightarrow \infty $ as $n \rightarrow \infty$.  So we have couples $(v_n , \lambda_n)$ satisfying $v_n = \lambda_n F(v_n)$ which gives us
$$
L v_n = \lambda_n (- f - cv_n)\quad \text{in}\; D.
$$
Letting $w_n = \norm{v_n}^{-1}_{L^\infty (D)} v_n$ we get from above that
$$
L w_n = \lambda_n \frac{ - f}{\norm{v_n}_{L^\infty (D)}} - c \lambda_n w_n.
$$
Since $\norm{w_n}_{L^\infty (D)} = 1$ for all $n$ we see that
$$
\Vert \lambda_n \frac{f}{\norm{v_n}_{L^\infty (D)}} + c \lambda_n w_n \Vert_{L^\infty (D)} \leq \kappa_2,
$$
for some $\kappa_2> 0$. Therefore, using \cref{L3.2},
$\{w_n\;:\, n\geq 1\}$ is equicontinuous and hence, up to a subsequence, $w_n \rightarrow w$  in $\cC_0(D)$. By the stability property of the viscosity solutions we obtain
\begin{align*}
L w = 0- c \lambda w  \quad \text{in}\; D \quad
\text{and}\quad  w= 0 \quad \text{in}\; D^c,
\end{align*}
for some $\lambda \in [ 0, 1]$.  Hence $w$ solves the equation
\begin{equation*}%\label{EL3.3B}
\begin{split}
L w + c \lambda w&= 0  \quad \text{in}\; D,
\\
w &= 0 \quad \text{in}\; D^c .
\end{split}
\end{equation*}
From \cref{C2.2} we see that $w=0$ in $\Rd$.
But this contradicts the fact that
$\norm{w}_{L^\infty (D)} =1$, and this proves our claim. 

%If $\kappa=\min_{\Rd} w<0$
%then $w_k=w-\kappa$ solves
%$$ L w_\kappa + \lambda c w_\kappa\leq 0 \quad \text{in}\; D,$$
%and $w_\kappa$ vanishes somewhere, say $x_0$, in $D$. Considering
%$\varphi=0$ as a test function at the point $x_0$ and applying
%the definition of viscosity supersolution we obtain $w_\kappa(z)=0$ for
%$z\in (x_0+\sA)\cap D=A_1$. Repeating the same argument we get
%$w_\kappa=0$ in $\cup_{n\geq 0} A_n$. From continuity of $w$ and
%condition \hyperlink{A3}{(A3)} we thus obtain $w_\kappa=0$ in $\bar{D}$ which is a contradiction.
%This $w\geq 0$ in $\Rd$. Similar argument also gives $w\leq 0$ in $\Rd$.
%Hence $w=0$. This gives us the claim.  

Applying Leray-Schauder theorem we must have a fixed point of $F$. This
gives the existence of solution for \eqref{EL3.3A}. Uniqueness again follows
from the above arguments.
\end{proof}

Now we are ready to prove the existence of principal eigenfunction.
\begin{theorem}\label{T3.1}
Assume \hyperlink{A1}{(A1)}. Let $D$ be a bounded domain satisfying a uniform exterior sphere condition. Let $c\in\cC(\bar{D})$.
 There exists a unique $\psi_D \in \cC_b(\Rd)$,  satisfying
\begin{align*}
L \psi_D + c \psi_D &= -\lambda_D \psi_D  \quad \text{in}\; D, \\
\psi_D &= 0 \quad \text{in}\;  D^c, \\
\psi_D &>0 \quad \text{in} \; D, \quad \psi_D (0) = 1.
\end{align*}
Moreover, if $u \in \cC_{b,+}(\Rd)$ is positive in $D$ and satisfies
$$
L u + cu \leq -\lambda u \quad \text{in} \quad D,
$$
for some $\lambda \in \RR$ then $\lambda \leq \lambda_D$. Furthermore, if $\lambda = \lambda_D$ and $u=0$ in $D^c$, then
we have $u = k \psi_D$ for some $k >0$. Furthermore, $\lambda_D$ is
the only Dirichlet eigenvalue with a positive eigenfunction.
\end{theorem}

\begin{proof}
The proof technique is quite standard and follows 
by combining Kre{\u\i}n-Rutman theorem with \cref{L3.2,L3.3}. Replacing $c$ by $c-\norm{c}_{L^\infty (D)}$ we can 
assume that $c\leq 0$. Using \cref{L3.3} we define a map $\cT_1:\cC_0(D)
\to\cC_0(D)$ as follows: $\cT_1[u]=v$ if and only if
$$L v + cv=-u\quad \text{in}\; D, \quad \text{and}\quad
v=0\quad \text{in}\; D^c.$$
Since, by comparison principle (see  \cref{C2.2}), $\norm{v}\leq \norm{u}\, \max\{\norm{w_+}, \norm{w_{-}}\}$ where
$$L w_{\pm} + c w_{\pm}=\pm 1\quad \text{in}\; D, \quad \text{and}\quad
w_{\pm}=0\quad \text{in}\; D^c,$$
it follows from \cref{L3.2} that $\cT_1$ is a compact, bounded linear map.
Again, if $u_1\leq u_2$ in $\cC_0(D)$, by comparison principle 
(see \cref{C2.2}) we get $\cT_1[u_1]\leq \cT_1[u_2]$. Furthermore, if
$u_1\lneq u_2$, then $\cT_1[u_1]< \cT_1[u_2]$ in $D$ by \cref{T2.3}. Let $f\gneq 0$ be nonzero compactly supported function in $D$. Then, for $\cT_1[f] = v$, we have $v>0$ in $D$ and therefore, we can find $M>0$ satisfying $M\cT_1[f]> f$
in $D$. Denote by $\cP$ the cone of non-negative functions in $\cC_0(D)$.
From comparison principle \cref{C2.2}, it is easily seen that $\cT_1(\cP)\subset \cP$. Therefore, 
Kre{\u\i}n-Rutman applies to $\cT_1$ and we find
$\lambda_D > 0$ and $\psi_D \in C_0(D)$ with $\psi_D>0$ in $D$ satisfying 

\begin{equation}\label{ET3.1A}
\begin{split}
L\psi_D + c\psi_D + \lambda_D \psi_D &= 0 \quad \text{in}\; D , 
\\
\psi_D &= 0 \quad \text{in} \; D^c .
\end{split}
\end{equation}
Now we focus on the second part of the theorem.
Consider a non-negative function $u\in\cC_b(\Rd)\cap \cC_+(D)$  satisfies
$$L u + c u + \lambda u\leq 0 \quad \text{in} \; D,$$
for some $\lambda\in\RR$. Suppose, to the contrary, that $\lambda> \lambda_D$. Then using \cref{C2.1}, Theorem~\ref{T2.3} and the proof
of \cite[Theorem~3.2]{BL21} we find $\mathfrak{z}>0$ satisfying $u=\mathfrak{z}\psi_D$ in $D$. Since minimum of two
viscosity supersolutions is also a supersolution, we have $\mathfrak{z}\psi_D = \min\{u, \mathfrak{z}\psi_D\}$ and
$$L (\mathfrak{z}\psi_D) + (c+\lambda) (\mathfrak{z}\psi_D) \leq 0 \quad \text{in} \; D.$$
Applying Theorem~\ref{T1.2}, we see that $(\lambda-\lambda_D)\psi_d\leq 0$ in $D$ which is a contradiction. Hence $\lambda\leq \lambda_D$.
Rest of the proof follows from \cite[Theorem~3.2]{BL21}.
\end{proof}

Our next aim is to prove Faber-Krahn inequality and to do so
we need certain continuity property of the principal eigenvalue 
with respect to the domains. To do so we need the following condition.

\begin{itemize}
\item[(\hypertarget{A4}{A4})]
The domain $D$ is Lipschitz and bounded with uniform exterior sphere condition of radius $r$. Furthermore, there exists a collection
of bounded, Lipschitz decreasing domains $\{D_n\}$ such that $\cap_{n} D_n=\bar{D}$
and each $D_n$ satisfies uniform exterior sphere condition of radius $r$.
\end{itemize}
It can be easily seen that convex domains, $\cC^{1,1}$ domains satisfy
the above condition.
In the next lemma we prove result on continuity of $\lambda_D$.
\begin{lemma}\label{L3.4}
Assume \hyperlink{A1}{(A1)} and \hyperlink{A4}{(A4)}. Denote by $\lambda_n=\lambda_{D_n}$. Then $\lambda_n\to \lambda_D$ as $n\to\infty$.
\end{lemma}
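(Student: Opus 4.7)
The plan is to prove $\lambda_n \uparrow \lambda_D$ by extracting a subsequential limit of the principal eigenfunctions on $D_n$ and appealing to the uniqueness clause of \cref{T3.1}. First, I would establish monotonicity: from the sup characterization \eqref{P-eigen}, any $v$ witnessing admissibility of a value $\lambda$ for $D_n$ restricts to a witness for $D$ (both positivity and $Lv + cv + \lambda v \leq 0$ are inherited on $D \subset D_n$), and likewise for $D_{n+1} \subset D_n$. Hence $\lambda_n \leq \lambda_{n+1} \leq \lambda_D$, so $\lambda_\infty := \lim_n \lambda_n$ exists with $\lambda_\infty \leq \lambda_D$, and it suffices to rule out the strict inequality.

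Second, I would take the eigenfunctions $\psi_n \in \cC_b(\Rd)$ from \cref{T3.1} on $D_n$, rescaled by $\norm{\psi_n}_\infty = 1$ (more convenient here than $\psi_n(0)=1$). Since $\psi_n$ solves $L\psi_n = -(c + \lambda_n)\psi_n$ in $D_n$ with $\psi_n = 0$ in $D_n^c$, the uniformity of the exterior sphere radius $r$ and of $\diam(D_n) \leq \diam(D_1)$ furnished by \hyperlink{A4}{(A4)} lets \cref{L3.1} deliver the boundary estimate
\[
|\psi_n(x)| \leq K\, \dist(x, \partial D_n), \qquad x \in D_n,
\]
with $K = K(r, d, \diam(D_1), \norm{c}_\infty, \lambda_D)$ independent of $n$. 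Combined with the interior H\"older estimate of \cite[Theorem~4.2]{Mou19} on any compact subset of $D$ (which lies inside $D_n$ for all large $n$), the family $\{\psi_n\}$ is bounded and equicontinuous on each compact subset of $D$, so a diagonal Arzel\`a--Ascoli extraction yields $\psi_{n_k} \to \psi$ locally uniformly on $D$. Using $\psi_n \equiv 0$ outside $D_n$ together with $\sup_{x \in \partial D} \dist(x, \partial D_n) \to 0$ (Dini's theorem applied to the monotone sequence $x \mapsto \dist(x, D_n^c)$), the boundary estimate forces $\psi_n \to 0$ uniformly on $\partial D$; extending by zero, $\psi \in \cC_b(\Rd)$ with $\psi = 0$ on $D^c$.

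The main obstacle is ensuring $\psi \not\equiv 0$, and this is where the $L^\infty$-normalization pays off. Picking $x_n \in D_n$ with $\psi_n(x_n) = 1$, the boundary decay gives $\dist(x_n, \partial D_n) \geq 1/K$; for $n$ large this confines $x_n$ to the fixed compact set $\{x \in D : \dist(x, \partial D) \geq 1/(2K)\}$. Passing to a further subsequence $x_{n_k} \to x^\star \in D$ and invoking local uniform convergence yields $\psi(x^\star) = 1$. I would then pass to the limit in $L\psi_n + (c + \lambda_n)\psi_n = 0$ using standard stability of viscosity solutions — the large-jump part of the nonlocal integral handled by dominated convergence from $\norm{\psi_n}_\infty \leq 1$ together with \eqref{kernel}, and the local part together with the small-jump corrector through the usual test-function manipulation as in the proofs of \cref{T1.1,T1.2} — to obtain a nonnegative, nontrivial viscosity solution $\psi$ of $L\psi + c\psi + \lambda_\infty \psi = 0$ in $D$ with $\psi = 0$ on $D^c$. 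Finally, the strong maximum principle valid under \hyperlink{A2}{(A2)} (the propagation argument already used inside the proof of \cref{T3.1}) promotes $\psi \geq 0$ to $\psi > 0$ in $D$, and the uniqueness clause of \cref{T3.1} then forces $\lambda_\infty = \lambda_D$, completing the proof.
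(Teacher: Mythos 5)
Your proposal is correct and follows essentially the same route as the paper: monotonicity of $\lambda_n$, normalization $\norm{\psi_n}_\infty=1$, equicontinuity via \cref{L3.1} and the argument of \cref{L3.2} under \hyperlink{A4}{(A4)}, Arzel\`a--Ascoli plus viscosity stability to pass to the limit, then the strong maximum principle and the uniqueness part of \cref{T3.1} to conclude $\lambda_\infty=\lambda_D$. Your localization of the maximum points $x_n$ away from $\partial D_n$ is a nice explicit justification of the nontriviality of the limit, which the paper leaves implicit in the assertion $\psi\gneq 0$.
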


\begin{proof}
From \cref{T3.1} we notice that $\lambda_n\leq \lambda_{n+1}$.
Let $\lim_{n \rightarrow \infty}  \lambda_{_n} = \lambda$.  Evidently,
$ \lambda \leq \lambda_{ D}$. Using condition \hyperlink{A4}{(A4)},
\cref{L3.1} and the proof of
\cref{L3.2} it follows that $\{\psi_n\}$ is equicontinuous in $\Rd$
where $\psi_n$ is the principal eigenfunction corresponding to $\lambda_n$.
We also normalize $\psi_n$ to satisfy $\norm{\psi_n}_{L^\infty (\Rd)}=1$. Applying
Arzel\'{a}-Ascoli we can extract a subsequence of $\psi_n$ converging
to $\psi$ and by the stability property of the viscosity solution we
obtain
$$L \psi + (c+\lambda) \psi=0\quad \text{in}\; D, 
\quad \text{and}\quad \psi=0\quad \text{in}\; D^c.$$
Since $\psi\gneq 0$, by strong maximum principle Theorem~\ref{T2.3}, we must have $\psi>0$ in 
$D$. Then, by \cref{T3.1}, we must have $\lambda=\lambda_D$. Hence the 
proof.
\end{proof}

Next we find a representation of the principal eigenvalue which is
crucial for the proof of Faber-Krahn inequality.
\begin{lemma}\label{L3.5}
Consider the setting of \cref{L3.4} and let $c=0$. Let $\lambda_D$ be 
the corresponding principal eigenvalue. Then
\begin{equation}\label{EL3.5A}
\lambda_D = -\lim_{t\to\infty} \frac{1}{t}\log \Prob_x(\uptau>t)\quad 
\text{for all}\; x\in D.
\end{equation}
\end{lemma}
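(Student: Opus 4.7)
The plan is to derive matching exponential bounds on $\Prob_x(\uptau > t)$ via the semigroup eigenidentity
$$P^D_t \psi_D(x) \,=\, e^{-\lambda_D t}\psi_D(x), \qquad t \geq 0,$$
where $\psi_D$ is the principal eigenfunction from \cref{T3.1} and $P^D_t f(x) \df \Exp_x[f(X_t)\Ind_{\{t<\uptau\}}]$ is the killed semigroup. Since $\psi_D$ is only a viscosity solution, I would avoid any Dynkin/It\^o argument and instead derive the identity using only the stochastic representation from \cref{T1.1}. Applying that representation to $L\psi_D = -\lambda_D \psi_D$ in $D$ with $\psi_D = 0$ on $D^c$ gives
$$\psi_D(x) \,=\, \Exp_x\biggl[\int_0^{\uptau} \lambda_D \psi_D(X_s)\,\D{s}\biggr].$$
Splitting the integral at $t\wedge \uptau$ and applying the strong Markov property at time $t$ on $\{t<\uptau\}\in\cF_t$ (using that $X_t\in D$ there, so the same representation yields $\Exp_{X_t}[\int_0^{\uptau}\lambda_D\psi_D(X_s)\,\D{s}]=\psi_D(X_t)$) produces the Volterra identity
$$\psi_D(x) \,=\, \lambda_D \int_0^t h(s)\,\D{s} + h(t), \qquad h(s) \df P^D_s \psi_D(x).$$
A Gronwall argument (noting that $h$ is bounded by $\norm{\psi_D}_\infty$) gives uniqueness of a bounded solution, and the explicit solution $h(t) = \psi_D(x)\,e^{-\lambda_D t}$ verifies the equation.

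Granted this, the trivial bound $P^D_t \psi_D(x) \leq \norm{\psi_D}_\infty \Prob_x(\uptau>t)$ yields
$$\Prob_x(\uptau>t) \,\geq\, \frac{\psi_D(x)}{\norm{\psi_D}_\infty}\, e^{-\lambda_D t},$$
so that $\limsup_{t\to\infty}\,-\tfrac{1}{t}\log \Prob_x(\uptau>t) \leq \lambda_D$. For the matching lower bound I would invoke the enlarged domains $D_n\supset \bar D$ furnished by \hyperlink{A4}{(A4)} and apply the same eigenidentity on each: $P^{D_n}_t\psi_{D_n}(x) = e^{-\lambda_{D_n} t}\psi_{D_n}(x)$. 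Since $\uptau=\uptau_D\leq \uptau_{D_n}$, and on $\{\uptau>t\}$ one has $X_t\in D$, and since $m_n\df \min_{\bar D}\psi_{D_n} > 0$ (by the strong maximum principle under \hyperlink{A2}{(A2)} together with continuity of $\psi_{D_n}$ on the compact set $\bar D$), I obtain
$$m_n\, \Prob_x(\uptau > t) \;\leq\; \Exp_x\bigl[\psi_{D_n}(X_t)\Ind_{\{\uptau > t\}}\bigr] \;\leq\; P^{D_n}_t\psi_{D_n}(x) \;=\; e^{-\lambda_{D_n} t}\psi_{D_n}(x).$$

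Taking logarithms, dividing by $t$, and letting $t\to\infty$ gives $\liminf_{t\to\infty} -\tfrac{1}{t}\log\Prob_x(\uptau>t) \geq \lambda_{D_n}$ for every $n$; then \cref{L3.4} passes $\lambda_{D_n}$ to $\lambda_D$ and closes the sandwich, establishing both the existence of the limit in \eqref{EL3.5A} and its value. The main technical obstacle is justifying the semigroup eigenidentity for a function that is only a viscosity solution: the maneuver above avoids any appeal to smoothness of $\psi_D$ by routing everything through the Feynman--Kac representation of \cref{T1.1} and the strong Markov property of $X$.
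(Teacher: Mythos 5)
Your proposal is correct and follows essentially the same route as the paper: the paper obtains the identity $\psi_D(x)=\Exp_x[e^{\lambda_D t}\psi_D(X_t)\Ind_{\{\uptau>t\}}]$ by citing \cite[Lemma~3.1]{B21} and then concludes via \cref{L3.4} and the argument of \cite[Corollary~4.1]{BL17b}, which is exactly your two-sided exponential sandwich using the enlarged domains $D_n\supset\bar D$ from \hyperlink{A4}{(A4)}. Your Feynman--Kac/Markov-property/Gronwall derivation of the killed-semigroup eigenidentity and the explicit lower/upper bounds simply fill in, in self-contained form, the steps the paper delegates to those references.
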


\begin{proof}
The principal eigenpair $(\psi_D, \lambda_D)$ satisfies
$$L\psi_D + \lambda_D\psi_D=0\quad \text{in}\; D,
\quad \text{and}\quad \psi_D=0\quad \text{in}\; D^c.$$
From the arguments of \cite[Lemma~3.1]{B21} we then have
\begin{equation}\label{EL3.5B}
\psi_D(x)=\Exp_x\left[e^{\lambda_D t} \psi_D(X_t)\Ind_{\{\uptau>t\}}\right],
\quad x\in D.
\end{equation}
Using \eqref{EL3.5B}, \cref{L3.4} and the proof of \cite[Corollary~4.1]{BL17b} we get \eqref{EL3.5A}. This completes the proof.
\end{proof}

Now we are ready to prove the Faber-Krahn inequality.
\begin{theorem}[Faber-Krahn inequality]\label{T3.2}
Let
$z\mapsto j(z)$  be isotropic and radially decreasing.
Let $D$ be any bounded domain satisfying $|\partial D|=0$.
Then
$$\lambda_D\geq \lambda_B,$$
where $B$ is ball around $0$ satisfying $|B|=|D|$.
\end{theorem}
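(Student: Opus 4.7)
The approach combines the probabilistic representation of the principal eigenvalue from \cref{L3.5} with the Brascamp--Lieb--Luttinger (BLL) rearrangement inequality. Because \cref{L3.5} requires hypothesis \hyperlink{A4}{(A4)}, I would first approximate the general bounded open $D$ with $|\partial D|=0$ from outside by a decreasing family $\{D_m\}$ of Lipschitz domains enjoying a uniform exterior sphere condition and satisfying $\bigcap_m D_m=\bar D$ and $|D_m|\downarrow|D|$. Together with \cref{L3.4} this reduces the claim to the case when $D$ itself satisfies \hyperlink{A4}{(A4)}, for which
\[
\lambda_D=-\lim_{t\to\infty}\tfrac{1}{t}\log\Prob_0(\uptau_D>t),\qquad \lambda_B=-\lim_{t\to\infty}\tfrac{1}{t}\log\Prob_0(\uptau_B>t),
\]
after translating so that $0\in D$ and $B$ is centered at $0$ with $|B|=|D|$. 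It thus suffices to prove the pointwise inequality $\Prob_0(\uptau_D>t)\le \Prob_0(\uptau_B>t)$ for every $t>0$.

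The first ingredient in the comparison is that the transition density $p_s$ of $X=B+Y$ is isotropic and radially nonincreasing. Isotropy is immediate since $j$ is radial: the distribution $\nu_s$ of $Y_s$ is isotropic, and $p_s=p_s^B*\nu_s$ is isotropic as a convolution of two isotropic factors. Radial monotonicity uses the Brownian contribution in an essential way: decomposing $\nu_s$ according to its radial distribution reduces matters to showing that the spherical average $x\mapsto \int_{|w|=r}p_s^B(x-w)\,\D{\sigma_r(w)}$ is radially nonincreasing in $|x|$ for each fixed $r>0$, which is a Gaussian Bessel-type computation relying on the strict radial decrease of $p_s^B(x)\propto e^{-|x|^2/(4s)}$.

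With $p_s$ isotropic and radially nonincreasing, I would discretize the path of $X$. For every $n\in\NN$, by the c\`adl\`ag property of $X$,
\[
\Prob_0(\uptau_D>t) \le \Prob_0\bigl(X_{kt/n}\in D,\ k=1,\dots,n\bigr) = \int_{\Rd^n}\prod_{k=1}^n\Ind_D(y_k)\,p_{t/n}(y_k-y_{k-1})\,\D{y_1}\cdots\D{y_n},
\]
with $y_0:=0$. Since $\Ind_D^{\ast}=\Ind_B$ (Schwarz symmetrization, $|B|=|D|$) and $p_{t/n}^{\ast}=p_{t/n}$ by the previous step, BLL yields
\[
\Prob_0(\uptau_D>t) \le \int_{\Rd^n}\prod_{k=1}^n\Ind_B(y_k)\,p_{t/n}(y_k-y_{k-1})\,\D{y_1}\cdots\D{y_n} = \Prob_0\bigl(X_{kt/n}\in B,\ k=1,\dots,n\bigr).
\]
The right-hand side is monotone nonincreasing in $n$, and by right-continuity of $X$ together with $\Prob_0(X_s\in\partial B)=0$ for $s>0$ (from absolute continuity of the transition density, which the Brownian part provides), its limit as $n\to\infty$ is $\Prob_0(\uptau_B>t)$. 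This establishes the core inequality; taking logarithms, dividing by $t$, and letting $t\to\infty$ gives $\lambda_D\ge\lambda_B$.

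The most delicate step is the radial monotonicity of $p_s$: an isotropic L\'evy measure need not, on its own, produce a radially nonincreasing density in dimension $d\ge2$, so the Brownian component of $X=B+Y$ is crucial via the strongly unimodal Gaussian $p_s^B$. The final $n\to\infty$ step is more routine, using the same absolute continuity to discard the measure-zero boundary $\partial B$ at each sampling time.
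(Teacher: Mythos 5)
Your overall strategy is the same as the paper's (probabilistic representation of $\lambda_D$ from \cref{L3.5}, discretization of the exit probability, Brascamp--Lieb--Luttinger, then outer approximation of $D$ together with monotonicity and \cref{L3.4}), but there is a genuine gap in the step where you establish that $p_s$ is radially nonincreasing. You decompose $\nu_s$ into spherical shells and claim that the spherical average $x\mapsto \int_{|w|=r}p_s^B(x-w)\,\D\sigma_r(w)$ is radially nonincreasing for each fixed $r$. This is false as soon as $r$ is large compared with $\sqrt{s}$: in $d=1$ this average is the bimodal function $\tfrac12\bigl(e^{-(x-r)^2/4s}+e^{-(x+r)^2/4s}\bigr)$, and in $d=2$ it equals $c\,e^{-(|x|^2+r^2)/4s}I_0\bigl(|x|r/2s\bigr)$, whose value at $|x|=r$ is of order $\sqrt{s}/r$ while its value at the origin is $e^{-r^2/4s}$; it peaks near $|x|=r$, not at $0$. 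So the Gaussian factor cannot substitute for unimodality of the jump kernel, contrary to your closing remark: for instance, if $Y$ is compound Poisson with jump law uniform on a sphere of radius $R$, then for small $t$ the density of $X_t=B_t+Y_t$ is larger at radius $R$ than at radius $R/2$, even though the L\'evy measure is isotropic. The hypothesis you never actually used in this step --- that $j$ itself is radially decreasing --- is exactly what is needed.

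The fix is the one the paper uses: since the L\'evy measure is isotropic \emph{unimodal} and the Gaussian part is isotropic, the marginals of $X$ are isotropic unimodal by Watanabe's theorem \cite{W83} (alternatively, one can argue via the closure of isotropic unimodal laws under convolution and weak limits, which again requires the radial decrease of $j$, not just its isotropy). With that step repaired, the remainder of your argument --- the one-sided discretized inequality, BLL with $\Ind_D^*=\Ind_B$ and $p_{t/n}^*=p_{t/n}$, the passage $n\to\infty$ for the ball, the eigenvalue representation of \cref{L3.5}, and the approximation of a general $D$ with $|\partial D|=0$ by domains satisfying \hyperlink{A4}{(A4)} combined with domain monotonicity and \cref{L3.4} --- matches the paper's proof in all essentials.
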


\begin{proof}
By the assumption on $j$, \hyperlink{A1}{(A1)} holds.
We note that $p_t(x, y)=p_t(y-x)$ where
$$\int_{\Rd} e^{i \xi\cdot x} p_t(x)\D{x}= e^{-t (|\xi|^2 + \psi(\xi))}.$$
From \cite{W83} we know that $p_t$ is isotropic unimodal, that is, $p_t$ is 
radially decreasing.
We first assume that $D$ is a smooth domain.
Without any loss of generality we may also assume $0\in D$. Then by Markov property
\begin{align*}
\Prob_0(\uptau>t)&=\lim_{m\to\infty} \Prob_{0}(X_{\frac{t}{m}}\in D,
X_{\frac{2t}{m}}\in D, \ldots, X_{\frac{mt}{m}}\in D)
\\
&= \lim_{m\to\infty}\, \int_{D}\int_D\cdots\int_D
p_{\frac{t}{m}}(x_1) p_{\frac{t}{m}}(x_2-x_1)\cdots p_{\frac{t}{m}}(x_m-x_{m-1})
\D{x_1}\D{x_2}\ldots\D{x_m}
\\
&\leq \lim_{m\to\infty}\, \int_{B}\int_B\cdots\int_B
p_{\frac{t}{m}}(x_1) p_{\frac{t}{m}}(x_2-x_1)\cdots p_{\frac{t}{m}}(x_m-x_{m-1})
\D{x_1}\D{x_2}\ldots\D{x_m}
\\
&=\Prob_{0}(\uptau_B>t),
\end{align*}
where in the third line we used Brascamp-Lieb-Luttinger inequality
\cite[Theorem~3.4]{BLL}. Therefore,
\begin{equation*}
-\lim_{t\to\infty} \frac{1}{t}\log\Prob_0(\uptau>t)
\geq -\lim_{t\to\infty} \frac{1}{t}\log\Prob_0(\uptau_B>t).
\end{equation*}
Applying \cref{L3.5} we then have
\begin{equation}\label{ET3.2A}
\lambda_D\geq \lambda_B\,.
\end{equation}
Now given a bounded domain $D$ with $|\partial D|=0$ we consider
 a decreasing sequence of smooth domains $D_n$
such that $\cap_{n\geq 1} D_n =\bar{D}$ and $|D_n|\to |D|$ as $n\to\infty$.
Let $B_n$ be a ball centered at $0$ and $|B_n|=|D_n|$. It is also easily seen that $B$ and $\{B_n\}$ satisfies condition \hyperlink{A4}{(A4)}. 
Using \eqref{ET3.2A} and monotonicity of $\lambda_D$ with respect to
domains, we get that
$$\lambda_D\geq \lambda_{D_n}\geq \lambda_{B_n}.$$
Now let $n\to\infty$ and apply \cref{L3.4} to conclude
$$\lambda_D\geq \lambda_B.$$
Hence the proof.
\end{proof}

%%%%%%%%%%%%%%%%%%%%%%%%%%%%%%%%%%%%%%%%%%%%%%%%%%%%%%%%%%%%%%%%%%%%%%%%
\section{Symmetry of positive solutions and Gibbons' problem}\label{S-Gib}
In this section we prove radial symmetry of positive solutions and
a one-dimensional symmetry result related to the Gibbons' conjecture.

\begin{theorem}\label{T4.1}
Assume \hyperlink{A1}{(A1)}. Also, assume that $j$ is 
radially decreasing in $\Rd\setminus\{0\}$ and strictly decreasing in a 
neighbourhood of $0$.
Suppose that $D$ is convex in the direction of the $x_1$ axis, and symmetric about the plane $ \left\lbrace x_1 = 0 \right\rbrace$.  Also, let $f : [0,\infty) \rightarrow \RR$ be locally Lipschitz continuous. Consider any solution $u\in\cC_b(\Rd)$ of
\begin{align*}
L u &= f(u) \quad \text{in}\; D, 
\\
u &= 0 \quad \text{in}\; D^c,
 \\
u &>0 \quad \text{in}\; D .
\end{align*}
Then $u$ is symmetric with respect to $x_1 = 0$ and strictly decreasing in the $x_1$ direction.
\end{theorem}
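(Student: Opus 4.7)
The plan is to run the method of moving planes in the $x_1$-direction. For $\lambda\in\RR$ I set $T_\lambda=\{x_1=\lambda\}$, $\Sigma_\lambda := D\cap\{x_1>\lambda\}$, let $x^\lambda:=(2\lambda-x_1,x_2,\ldots,x_d)$ denote the reflection across $T_\lambda$, and define $u_\lambda(x):=u(x^\lambda)$ and $w_\lambda:=u_\lambda-u$. Because $j$ is radially symmetric, the change of variables $z=(-y_1,y_2,\ldots,y_d)$ in the nonlocal integral shows that $L$ is invariant under reflection across $T_\lambda$, so $Lu_\lambda(x)=(Lu)(x^\lambda)$. Combined with $Lu=f(u)$ this yields, in the viscosity sense on $\Sigma_\lambda$,
\begin{equation*}
L w_\lambda(x) = f(u_\lambda(x))-f(u(x)) = c_\lambda(x)\,w_\lambda(x),
\end{equation*}
with $\|c_\lambda\|_\infty$ controlled by the Lipschitz constant of $f$ on $[0,\|u\|_\infty]$. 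By construction $w_\lambda$ is antisymmetric across $T_\lambda$ and vanishes on $T_\lambda$; since $u\ge 0$ globally, $w_\lambda(x)=u(x^\lambda)\ge 0$ on $\{x_1>\lambda\}\setminus\Sigma_\lambda$.

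The substantive step is to show $w_\lambda\ge 0$ on $\Sigma_\lambda$ for every $\lambda\ge 0$. Antisymmetry forces $w_\lambda$ to change sign on $\{x_1<\lambda\}$, so \cref{C2.1} cannot be applied to $-w_\lambda$ directly. I would handle this by exploiting the radial monotonicity of $j$ together with antisymmetry: splitting the nonlocal integral in $Lw_\lambda(x)$ at $\xi_1=\lambda$ (with $\xi=x+y$) and reflecting the contribution from $\{\xi_1<\lambda\}$ via $\xi\mapsto\xi^\lambda$ yields, using $w_\lambda(\xi^\lambda)=-w_\lambda(\xi)$,
\begin{equation*}
\int_{\RR^d}\bigl(w_\lambda(x+y)-w_\lambda(x)\bigr)j(y)\,\D{y}
 = \int_{\{\xi_1>\lambda\}} \bigl(w_\lambda(\xi)-w_\lambda(x)\bigr)\bigl(j(\xi-x)-j(\xi^\lambda-x)\bigr)\D{\xi} - 2w_\lambda(x)\int_{\{\xi_1>\lambda\}} j(\xi^\lambda-x)\,\D{\xi},
\end{equation*}
whose kernel satisfies $j(\xi-x)-j(\xi^\lambda-x)\ge 0$ because $|\xi-x|\le|\xi^\lambda-x|$ when $x_1,\xi_1>\lambda$ and $j$ is radially decreasing. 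Hence on antisymmetric functions $L$ reduces to a local-plus-nonlocal operator on $\Sigma_\lambda$ with non-negative nonlocal kernel, and \cref{T1.2,T2.1,C2.1} apply. For $\lambda$ close to $\Lambda:=\sup_{D}x_1$, $|\Sigma_\lambda|$ is small and \cref{C2.1} delivers $w_\lambda\ge 0$ on $\Sigma_\lambda$, starting the moving plane.

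I would then set $\lambda^*:=\inf\{\lambda\ge 0:w_{\lambda'}\ge 0\text{ in }\Sigma_{\lambda'}\ \forall\lambda'\ge\lambda\}$ and prove $\lambda^*=0$ by contradiction. Continuity gives $w_{\lambda^*}\ge 0$ in $\Sigma_{\lambda^*}$, and the strong maximum principle for the reduced operator (valid by \hyperlink{A2}{(A2)} and the strict radial decrease of $j$ near $0$) upgrades this to $w_{\lambda^*}>0$ in $\Sigma_{\lambda^*}$ whenever $\lambda^*>0$. Hopf's lemma (\cref{T2.2}) then produces a positive lower bound for $w_{\lambda^*}$ on any compact subset $K\Subset\Sigma_{\lambda^*}$. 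For small $\varepsilon>0$, I would decompose $\Sigma_{\lambda^*-\varepsilon}=K\cup R_\varepsilon$ with $K\subset\Sigma_{\lambda^*}$ (where continuity transfers positivity) and $R_\varepsilon$ a narrow collar near $T_{\lambda^*-\varepsilon}\cup\partial D$, then apply the narrow-domain principle on $R_\varepsilon$ to conclude $w_{\lambda^*-\varepsilon}\ge 0$ on $\Sigma_{\lambda^*-\varepsilon}$, contradicting the definition of $\lambda^*$. The analogous argument from the opposite side forces $w_0\equiv 0$, hence $u$ is symmetric across $\{x_1=0\}$. Strict monotonicity of $u$ in $x_1$ on $\{x_1>0\}$ follows because, for each $\lambda>0$, the strong maximum principle forces $w_\lambda>0$ in $\Sigma_\lambda$, i.e.\ $u(x^\lambda)>u(x)$.

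The principal obstacle is justifying the antisymmetric reformulation of the nonlocal operator at the level of viscosity solutions rather than for classical solutions: one must check that a test function touching $w_\lambda$ from below at an interior point of $\Sigma_\lambda$, once extended antisymmetrically across $T_\lambda$, translates into a viscosity inequality for the reduced non-negative-kernel operator, so that \cref{T1.2,T2.1,C2.1,T2.2} are available in this form. With this reduction in place, the remainder of the argument is the classical moving-plane scheme of \cite{GNN,BL21,FW14} adapted to the mixed operator $L$.
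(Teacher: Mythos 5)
Your overall moving-plane scheme mirrors the paper's, but the pivotal step is carried out differently, and as written it has a genuine gap. To start the plane near $\lambda_{\max}$ (and again in the collar argument at $\lambda^*$) you need a narrow-domain maximum principle for $w_\lambda$, and since $w_\lambda$ changes sign on $\{x_1<\lambda\}$ you propose to ``fold'' the nonlocal term by antisymmetry and then invoke \cref{T1.2,T2.1,C2.1,T2.2} for the resulting reduced operator with kernel $j(\xi-x)-j(\xi^\lambda-x)$ plus a nonpositive zero-order term. But those results are proved in the paper only for the translation-invariant operator $L$: \cref{T2.1} (hence \cref{C2.1}) rests on the stochastic representation through the process $X$ and \cref{L2.1,L2.2}, and \cref{T2.2} is likewise tailored to $L$. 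None of this transfers automatically to the $x$-dependent folded operator, so the assertion that these theorems ``apply'' is precisely the missing piece, and it is more than the viscosity-reformulation technicality you flag at the end. (There is also an analytic wrinkle: your displayed identity splits $\int(w_\lambda(x+y)-w_\lambda(x))j(y)\,\D y$ without the compensator, and the separate pieces need not converge when $j\notin L^1$; and the ``lower bound on compacts'' needs only continuity and strict positivity, not Hopf -- indeed \cref{T2.2} is not applicable to $w_{\lambda^*}$, which is negative on the reflected side.)

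The paper circumvents exactly this difficulty by a truncation device rather than by folding the operator: it sets $v_\lambda=w_\lambda$ on the negativity set $\Sigma_\lambda^-=\{x\in\Sigma_\lambda: w_\lambda<0\}$ and $v_\lambda=0$ elsewhere, so that the exterior condition of \cref{C2.1} holds trivially, and then verifies $Lv_\lambda\le f(u_\lambda)-f(u)$ in $\Sigma_\lambda^-$ in the viscosity sense; the only place the radial monotonicity of $j$ enters is in checking that the correction $\zeta_\lambda=w_\lambda-v_\lambda$ satisfies $\int(\zeta_\lambda(x+z)-\zeta_\lambda(x))j(z)\,\D z\ge 0$ (the reflection argument of Felmer--Wang). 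This keeps everything phrased for the original $L$, so \cref{C2.1} applies verbatim on $\Sigma_\lambda^-$, whose measure is small either because $\lambda$ is near $\lambda_{\max}$ or, in the continuation step, after removing a compact set where $w_{\lambda^*}>0$. The subsequent upgrade $w_\lambda\ge 0\Rightarrow w_\lambda>0$ is done in the paper not by citing an abstract strong maximum principle for a reduced operator, but by an explicit test-function computation at a putative interior zero $x_0$, using the strict decrease of $j$ near $0$ and a connectedness argument. If you replace your folded-operator step by this truncation (or alternatively prove from scratch an ABP/narrow-domain principle for antisymmetric supersolutions), the rest of your outline -- the definition of $\lambda^*$, the collar decomposition, and the strict monotonicity -- goes through as in the paper.
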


\begin{proof}
We use the method of moving plane appeared in the seminal work
Gidas, Ni and Nirenberg \cite{GNN} which was motivated by 
a work of Serrin \cite{Serrin}. The proof
can be easily completed following the arguments of \cite{FW14}. We provide
a proof for the sake of completeness. 
Define
\begin{align*}
\Sigma_{\lambda} = \left\lbrace x=(x_1,x^{'} ) \in D : x_1>\lambda \right\rbrace \quad &and \quad T_{\lambda} =  \left\lbrace x=(x_1,x^{'} ) \in \Rd : x_1 =\lambda \right\rbrace, \\
u_{\lambda}(x) = u(x_{\lambda}) \quad &and \quad w_{\lambda }(x) = u_{\lambda}(x) - u(x),
\end{align*}
where $x_{\lambda} = (2\lambda - x_1, x^{'})$.  For a set $A$ we denote by $\sR_{\lambda}A$ the reflection of $A $ with respect to the plane $T_{\lambda}$.  Also, define 
$$\lambda_{\max}= \sup \left\lbrace \lambda > 0 : \Sigma_{\lambda} \neq \emptyset \right\rbrace .$$
We note that for any $\lambda \in (0, \lambda_{ \max})$, $u_{\lambda}$ is a viscosity solution of
$$L u_{\lambda} = f(u_{\lambda}) \quad \text{in}\; \Sigma_{\lambda} ,$$
and therefore, from \cref{T1.2} we obtain
$$
L w_{\lambda}= f(u_{\lambda}) - f(u) \quad \text{in}\; \Sigma_{\lambda}.
$$
Define $\Sigma_{\lambda}^{-} = \left\lbrace x \in \Sigma_{\lambda} : w_{\lambda} <0\right\rbrace$.   Since $w_{\lambda} \geq 0$ on $\partial \Sigma_{\lambda}$,  it follows that $w_{\lambda} = 0$ on $\partial \Sigma_{\lambda}^{-}$.  Hence the function
$$
v_{\lambda} = 
\begin{cases}
w_{\lambda} & \text{ in} \; \Sigma_{\lambda}^{-},
\\
0 & \text{elsewhere},
\end{cases}
$$
is in $\cC_b(\Rd)$.  We claim that for every $\lambda \in (0, \lambda_{ \max})$
\begin{equation}\label{ET4.1A}
L v_{\lambda} \leq f(u_{\lambda}) - f(u) \quad in \quad \Sigma_{\lambda}^{-},
\end{equation}
in the viscosity sense.
To see this, let $\varphi$ be a test function that touches $v_{\lambda}$ from below at a point $x \in \Sigma_{\lambda}^{-}$. Then  
$\varphi + (w_{\lambda	} - v_{\lambda}) \in \cC_b(x)$ and touches $w_{\lambda}$ at $x $ from below.  Denote $\zeta_{\lambda} (x) = w_{\lambda} - v_{\lambda}$.  It then follows that
$$L(\phi + \zeta_{\lambda}) (x) \leq f(u_{\lambda}(x)) - f(u(x)). $$
Since $\zeta_{\lambda}  = 0$ in a small neighbourhood of $x$ in 
$\Sigma_{\lambda}^{-}$ we have $\Delta \zeta_{\lambda}(x)=0$. 
Again, since $j$ is radial, to show \eqref{ET4.1A} it is enough to
show that
$$ \int_{\Rd} ( \zeta_{\lambda}(x+z)-\zeta_\lambda(x)) j(z) \D{z} \geq 0.
 $$
This can be done by following the argument of \cite[p.~8]{FW14} and 
the fact $j$ is radially decreasing.

If $\lambda < \lambda_{\max}$ is sufficiently close to $\lambda_{\max}$, then $w_{\lambda} > 0$ in $\Sigma_{\lambda}$. Indeed, note that if $\Sigma_{\lambda}^{-} \neq \emptyset$,
then $v_{\lambda}$ satisfies \eqref{ET4.1A}. Denoting
$$c(x) = \frac{f(u_{\lambda}(x)) - f(u(x))}{u_{\lambda}(x) - u(x)},$$
 it then follows that
$$
L v_{\lambda} - c(x) v_{\lambda} \leq 0 \quad \text{in} \quad \Sigma_{\lambda}^{-} .
$$ 
 Thus choosing $\lambda$ sufficiently close to $\lambda_{\max}$  it follows from \cref{C2.1} that $v_{\lambda} \geq 0$ in $\Rd$.  Hence $\Sigma_{\lambda}^{-} = \emptyset$ and we have a contradiction. To show that $w_{\lambda} > 0$ in $\Sigma_{\lambda}$,  we assume to the contrary that $w_{\lambda}(x_0) = 0$ for some $x_0 \in \Sigma_{\lambda}$.
Consider a non-negative test function $\varphi \in \cC_b(x_0)$ crossing $w_{\lambda}$ from below with the property that $\varphi = 0$ in $B_r(x_0) \Subset \Sigma_{\lambda}$ and $\varphi = w_{\lambda}$ in $B^c_{2r}(x_0)$. Furthermore, choose $r$ small enough such that $B_{2r}(x_0) \Subset \Sigma_{\lambda}$ and $\varphi \geq 0$ in $ \Sigma_{\lambda}$. Then we obtain 
\begin{equation}\label{ET4.1B}
L\varphi(x_0) \leq 0 .
\end{equation}
Since $\Delta \varphi(x_0) = 0$ we get 
$$I[\varphi](x_0):= \frac{1}{2}\int_{\Rd}(\varphi(x_0+z)+\varphi(x_0-z)-2\varphi(x_0))
j(z)\D{z} \leq 0.$$ 
Next we compute $I[\varphi](x_0)$.
Note that $\varphi \geq 0$ in $R_{\lambda} \df \left\lbrace x \in \Rd \; : \; x_1 \geq \lambda  \right\rbrace$. We have
 \begin{align*}
I[\varphi](x_0) &= \int_{\Rd} \varphi(z) j(\vert x_0 - z \vert) \D{z}
\\
&= \int_{R_{\lambda}} \varphi(z) j(\vert x_0 - z \vert) \D{z} + \int_{\sR_{\lambda} R_{\lambda}} w_{\lambda}(z) j(\vert x_0 - z \vert) \D{z}
\\	
&= \int_{R_{\lambda}} \varphi(z) j(\vert x_0 - z \vert) \D{z} + \int_{ R_{\lambda}} w_{\lambda}(z_{\lambda}) j(\vert x_0 - z_{\lambda} \vert) \D{z}
\\	
&= \int_{R_{\lambda}} \varphi(z) j(\vert x_0 - z \vert) \D{z} - \int_{ R_{\lambda}} w_{\lambda}(z) j(\vert x_0 - z_{\lambda} \vert) \D{z}
\\	
&\geq \int_{R_{\lambda} \setminus B_{2r}(x_0)} w_{\lambda}(z) \left( j(\vert x_0 - z \vert) - j(\vert x_0 - z_{\lambda} \vert) \right) \D{z} - \int_{  B_{2r}(x_0)} w_{\lambda}(z) j(\vert x_0 - z_{\lambda} \vert) \D{z}.
 \end{align*}
Since $\vert z_{\lambda} - x_0\vert > \vert z - x_0\vert$ and $j(\vert z_{\lambda} - x_0\vert) \geq j(\vert z - x_0\vert)$ thus the first term in the above expression is non-negative. In fact, since
$$
\lim_{r \rightarrow 0} \int_{  B_{2r}(x_0)} w_{\lambda}(z) j(\vert x_0 - z_{\lambda} \vert) \D{z} =0,
$$
using \eqref{ET4.1B}, we obtain
\begin{equation}\label{ET4.1C}
\lim_{r \rightarrow 0} \int_{R_{\lambda} \setminus B_{2r}(x_0)} w_{\lambda}(z) \left( j(\vert x_0 - z \vert) - j(\vert x_0 - z_{\lambda} \vert) \right) \D{z}\leq 0.
\end{equation}
Since $w_\lambda$ is continuous and $j$ is strictly decreasing in a neighbourhood of $0$, from \eqref{ET4.1C} we get $w_\lambda=0$ in
$B_\delta(x_0)$ for some $\delta>0$. Thus $\{w_\lambda=0\}\cap\Sigma_\lambda$ is an open set. Hence $\{w_\lambda=0\}$ forms
a connected component of $\Sigma_\lambda$ which in turn, implies that
$\{w_\lambda=0\}\cap\partial\Sigma_\lambda\cap\partial D\neq\emptyset$.
 This is a contradiction.
Hence we must have $w_{\lambda} > 0$ in $\Sigma_{\lambda}$.

Now from the above argument and Step 2 in \cite[p.~10]{FW14} we can show that $\inf \{ \lambda > 0 \;  :\; w_{\lambda} > 0 \quad \text{in}\;\Sigma_{\lambda} \} = 0$. Also, strict monotonicity of $u$ in
the $x_1$ direction can be obtained by following the calculations in Step 3 of \cite{FW14}.
\end{proof}

As a consequence of \cref{T4.1} we obtain.
\begin{corollary}\label{C4.1}
Grant the setting of \cref{T4.1}.
 Then every solution to
\begin{align*}
L u= f(u) \quad \text{in}\; B_1(0), \quad
u=0\quad \text{in}\; B^c_1(0), \quad \text{and}\quad u>0 
\quad \text{in}\; B_1(0),
\end{align*}
is radial and strictly decreasing in $|x|$.
\end{corollary}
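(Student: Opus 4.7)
The plan is to deduce \cref{C4.1} from \cref{T4.1} by exploiting the rotational invariance of both the operator $L$ and the ball $B_1(0)$. Since $j$ is radial and $\Delta$ is rotation-invariant, $L$ commutes with every $R \in O(d)$, in the sense that $L(u \circ R) = (Lu)\circ R$; the invariance of the nonlocal term reduces to a change of variables $y \mapsto Ry$ using $j(Ry) = j(y)$ and $|Ry| = |y|$. As $R(B_1(0)) = B_1(0)$ and $f$ is independent of $x$, if $u$ solves the Dirichlet problem in \cref{C4.1}, then so does $u \circ R$ for every $R \in O(d)$.

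First I would apply \cref{T4.1} in the $e_1$-direction: $B_1(0)$ is convex in $x_1$ and symmetric about $\{x_1 = 0\}$, so $u$ is symmetric about $\{x_1 = 0\}$ and strictly decreasing in $x_1$ on $\{x_1 > 0\}$. Next, for each unit vector $\xi \in S^{d-1}$, I would choose a rotation $R \in O(d)$ with $Re_1 = \xi$ and apply \cref{T4.1} to $v \df u \circ R$, which solves the same Dirichlet problem. Translating the symmetry and monotonicity of $v$ back through $R$ shows that $u$ is symmetric about the hyperplane $\{x \cdot \xi = 0\}$ and strictly decreasing in the $\xi$-direction on $\{x \cdot \xi > 0\}$.

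Symmetry about every hyperplane through the origin forces $u$ to be radial: given $x, y \in B_1(0)$ with $|x| = |y|$ and $x \neq y$, the hyperplane through $0$ perpendicular to $x - y$ reflects $x$ onto $y$, so $u(x) = u(y)$. Writing $u(x) = u_0(|x|)$, strict monotonicity of $u_0$ on $[0,1)$ follows by taking $\xi = e_1$: for $0 \leq s < t < 1$, \cref{T4.1} gives $u_0(t) = u(t e_1) < u(s e_1) = u_0(s)$.

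No substantive obstacle is expected. The one detail worth checking is the $O(d)$-invariance of the nonlocal part of $L$ when $j$ is radial, which is a routine change of variables, together with the observation that the hypotheses of \cref{T4.1} on $j$ (radial, radially decreasing, strictly decreasing near $0$) and on $f$ are inherited verbatim by the rotated problem.
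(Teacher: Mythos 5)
Your argument is correct and is exactly the route the paper intends: the corollary is stated as an immediate consequence of \cref{T4.1}, obtained by applying the moving-plane symmetry result in every direction, with the rotational invariance of $L$ (from $j$ radial) and of $B_1(0)$ supplying the reduction to the $x_1$-direction. Your additional care about transferring the hypotheses to $u\circ R$ and deducing strict radial decrease from the directional monotonicity fills in the details the paper leaves implicit, and no gap remains.
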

Remaining part of this section is devoted to the Gibbons' problem.
Let $u : \Rd \rightarrow \RR$ be a solution to the problem
\begin{equation}\label{E4.4}
 \begin{cases}
L u(x) = f(u(x)), & \quad \text{for} \; x \in \Rd,
\\[2mm]
\lim_{x_n \rightarrow \pm \infty } u(x^{'}, x_n) = \pm 1 , & \quad \text{uniformly for } \;  x^{\prime} \in \RR^{d-1} .
\end{cases}
\end{equation}
We also suppose that $f \in \cC^1(\RR)$ satisfying
\begin{equation}\label{E4.5}
\inf_{|r|\geq 1}  f^{'}(r) >0\,.
\end{equation}
We show that $u$ is one-dimensional.
\begin{theorem}\label{T4.2}
Assume \hyperlink{A1}{(A1)}. Let $u\in\cC_b(\Rd)$ solve \eqref{E4.4} where $f$ satisfies \eqref{E4.5}.
Then there exists a strictly increasing function $u_0:\RR\to\RR$ satisfying
$$u(y, t)=u_0(t)\quad \text{for all}\; y\in\RR^d, \; t\in\RR.$$
\end{theorem}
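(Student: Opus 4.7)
The plan is to use the sliding method following \cite{FV11}, adapted to the mixed local–nonlocal setting. The key ingredients from the present paper are the comparison principle (\cref{T1.2}), the narrow-domain maximum principle (\cref{C2.1}), and a strong maximum principle for $L$ that follows from \hyperlink{A2}{\textbf{A2}} as in the proof of \cref{L3.3}. Two preliminary facts are needed: $f(\pm 1) = 0$ (which follows from letting $x_n \to \pm\infty$ in $Lu = f(u)$, since $L(\text{const}) = 0$ and $u$ converges uniformly), and $\|u\|_\infty \leq 1$ (if $\sup u = M > 1$, a maximizing sequence has bounded $x_n$-coordinate by uniform asymptotics; translating in the $x'$-direction and using local H\"older regularity of solutions to $Lu = f(u)$, one extracts a limit $v \in \cC_b(\Rd)$ achieving $M$ at a finite point $x_0$, so $L v(x_0) \leq 0$, yet $f(M) = \int_1^M f'(r)\,dr \geq c_0(M-1) > 0$ by \eqref{E4.5}; similarly $\inf u \geq -1$). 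The same H\"older regularity is uniform under translation, so the family of translates of $u$ is precompact in $\Ccl(\Rd)$.

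For $\tau > 0$ set $u^\tau(x) \df u(x + \tau e_n)$ and $w^\tau \df u^\tau - u$. From \cref{T1.2},
\begin{equation*}
L w^\tau = c^\tau(x)\, w^\tau, \qquad c^\tau(x) \df \int_0^1 f'\bigl(u(x) + s\, w^\tau(x)\bigr)\,\D{s},
\end{equation*}
holds in the viscosity sense, with $\|c^\tau\|_\infty \leq \|f'\|_{L^\infty([-1,1])}$. Set $\tau^* \df \inf\{\tau > 0 : w^s \geq 0 \text{ on } \Rd \text{ for all } s \geq \tau\}$. For $\tau$ large, the uniform convergence gives $w^\tau \geq 0$ outside a slab $\{|x_n| \leq R\}$, while inside the slab $u^\tau$ is close to $1$ and $u \in [-1, 1]$; continuity of $f'$ near $\pm 1$ combined with $f'(\pm 1) \geq c_0 > 0$ yields $c^\tau \geq c_0/2$ on the slab, and a barrier argument via \cref{T1.2} upgrades the rough bound $w^\tau \geq -\delta$ to $w^\tau \geq 0$ throughout. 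Hence $\tau^* < \infty$.

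The main step is $\tau^* = 0$. Assume $\tau^* > 0$. Uniform continuity of $u$ in $x_n$ makes $\tau \mapsto w^\tau$ continuous in $\cC_b(\Rd)$-norm, so $w^{\tau^*} \geq 0$; and $w^{\tau^*} \equiv 0$ is excluded, since it would force $u$ to be $\tau^*$-periodic in $x_n$, contradicting $u \to \pm 1$. The strong maximum principle (as in the proof of \cref{L3.3}) then yields $w^{\tau^*} > 0$ on $\Rd$. Next, take $\tau_k \uparrow \tau^*$ with $m_k \df -\inf w^{\tau_k} > 0$; by continuity, $m_k \to 0$. Select $x_k = (x'_k, t_k)$ with $w^{\tau_k}(x_k) < -m_k/2$; uniform asymptotics bound $|t_k| \leq R$. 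Translating $u$ by $(x'_k, 0)$ and using the compactness from the preceding paragraph, extract (along a subsequence) a limit $v \in \cC_b(\Rd)$ solving $L v = f(v)$, sharing the same asymptotic profile, and such that $W \df v^{\tau^*} - v$ satisfies $W \geq 0$ with $W(0, t_0) = 0$ for some $t_0 \in [-R, R]$. The strong maximum principle forces $W \equiv 0$, so $v$ is $\tau^*$-periodic in $x_n$---contradicting $v \to \pm 1$. Hence $\tau^* = 0$, so $u^\tau \geq u$ for every $\tau > 0$, and strictness follows from the strong maximum principle: $u$ is strictly increasing in $x_n$.

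The final step is the one-dimensional symmetry. For $y \in \RR^{d-1}$, set $U_y(x) \df u(x + (y, 0)) - u(x)$; by \cref{T1.2}, $U_y$ satisfies $L U_y = c_y(x) U_y$ in the viscosity sense with $c_y$ bounded, and $U_y \to 0$ as $x_n \to \pm\infty$ uniformly. An argument parallel to the $\tau^* = 0$ step---combining translation-compactness, the strong maximum principle, and the fact that strict monotonicity in $x_n$ provides a unique level-set parametrization---forces $U_y \equiv 0$; since $y$ is arbitrary, $u$ depends only on $x_n$, and $u_0(t) \df u(0, t)$ is the desired strictly increasing profile. The main obstacle is the $\tau^* = 0$ step: one must verify that the translation limit $v$ preserves the global asymptotic behavior (so that $\tau^*$-periodicity is indeed impossible) and that the strong maximum principle is available in the nonlocal setting and propagates the touching $W(0,t_0) = 0$ to $W \equiv 0$ through the support of $j$---both guaranteed by \hyperlink{A2}{\textbf{A2}}.
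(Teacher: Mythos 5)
Your overall strategy (sliding in the vertical direction, translation-compactness via the uniform H\"older estimate, and a strong maximum principle under \hyperlink{A2}{(A2)}) is the same Farina--Valdinoci scheme the paper follows, but two steps have genuine gaps. First, the maximum principles you invoke do not cover the sets where they are applied. In the step ``$\tau^*<\infty$'' the set $\{w^\tau<0\}$ is a priori unbounded, so neither \cref{C2.1} (which needs $|D|<\varepsilon$) nor the comparison part of \cref{T1.2} (stated for bounded $D$) applies; moreover the claims ``$w^\tau\geq 0$ outside a slab'' and ``$c^\tau\geq c_0/2$ on the slab'' are false as stated: uniform convergence only gives $|w^\tau|$ small for $|x_n|$ large, and the coefficient $c^\tau$ is controlled only on the negativity set $\{w^\tau<0\}$ (where both endpoints of the segment lie in $\{f'\geq\kappa_1\}$), not on the whole slab. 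What is actually needed is a maximum principle on all of $\Rd$ for $Lw=c\,w$ with $w\geq 0$ off $U$ and $c\geq\kappa>0$ on $U$, where the infimum of $w$ need not be attained; this is precisely \cref{L4.1} of the paper, proved by translating to near-minimizers, using the uniform H\"older bound, and testing the limit with a constant. The same gap reappears in your ``$\tau^*=0$'' step: since $m_k\to 0$ and $w^{\tau_k}\to 0$ uniformly as $|x_n|\to\infty$, ``uniform asymptotics'' do \emph{not} force the points $x_k$ with $w^{\tau_k}(x_k)<-m_k/2$ into a fixed slab; to place the touching point at finite height you either need an extra \cref{L4.1}-type argument for each $k$, or, as the paper does, you define the critical threshold $h_\circ$ in \eqref{ET4.2A0} through positivity on the bounded slab $\{|x_d|\leq M\}$ only, and only afterwards extend positivity to all of $\Rd$.

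Second, your final one-dimensionality step does not go through as described. You slide only in the direction $e_n$ and then assert that ``an argument parallel to the $\tau^*=0$ step'' forces $U_y\equiv 0$ for horizontal translations; but the sliding argument relies essentially on the favorable sign structure produced by a positive vertical component of the sliding direction (the coefficient is bounded below by $\kappa_1$ on the negativity set because both endpoints are pushed past $\pm(1-\delta)$, and large translations give a positive starting configuration). For a purely horizontal shift $U_y$ has no sign anywhere and there is no analogue of $\tau\geq 2M/\nu_d$, so the scheme cannot be repeated, and ``strict monotonicity gives a unique level-set parametrization'' by itself does not yield $\partial_{x_i}u=0$. The paper's resolution is to carry out the whole sliding argument for \emph{every} unit vector $\nu$ with $\nu_d>0$ (the estimates are uniform in $\nu$), obtaining $u(x+h\nu)\geq u(x)$ for all such $\nu$ and $h>0$, and then to let $\nu_d\searrow 0$ and $\mu_d\nearrow 0$ to conclude $\Gamma^\omega_h[u]\equiv 0$ for horizontal $\omega$; this limiting-in-direction device (or some substitute for it) is missing from your proposal and is needed to close the proof.
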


We need the following lemma to prove \cref{T4.2}.
\begin{lemma}\label{L4.1}
Let $w \in\cC_b(\Rd)$ satisfy
$$ Lw - c(x)w = 0 \; \text{ in} \; \Rd ,$$
with
$$ w(x) \geq 0 \quad \text{in}\; \Rd \setminus U \;  \text{and} \; c(x) \geq \kappa \; \text{in} \; U$$
for some open set $U \subseteq \Rd$ and some constant $ \kappa> 0$.
Also, assume that $c\in\cC_b(\Rd)$. Then
$$ w(x) \geq 0 \; \text{ for all} \; x \in \Rd .$$
\end{lemma}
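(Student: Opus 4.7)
The plan is to argue by contradiction, reducing to the situation in which $w$ attains a strictly negative global minimum in the limit, and then exploiting the viscosity supersolution property with a constant test function at that minimum to contradict the sign condition $c\geq \kappa>0$ on $U$.

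More precisely, suppose $m:=\inf_{\Rd} w<0$. Since $w\geq 0$ on $\Rd\setminus U$ and $w$ is continuous, any minimizing sequence $(x_n)$ with $w(x_n)\to m$ must eventually lie in $U$; however, there is no reason for the infimum itself to be attained. To force attainment in a limit, I would translate: put $w_n(x):=w(x+x_n)$ and $c_n(x):=c(x+x_n)$, so that $Lw_n-c_n w_n=0$ in $\Rd$ in the viscosity sense for every $n$. The interior H\"older estimates for viscosity solutions of $Lu=f$ with bounded right-hand side (available from \cite{Mou19}) give that $\{w_n\}$ is equicontinuous on every compact set, so by Arzel\`a--Ascoli a subsequence converges locally uniformly to some $w_\infty\in\cC_b(\Rd)$ with $w_\infty(0)=m$ and $w_\infty\geq m$ everywhere. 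Since $c$ is bounded, along a further subsequence one has $c(x_n)\to \bar c$ with $\bar c\geq \kappa$.

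At $x=0$ the limiting function $w_\infty$ realizes its global minimum $m<0$. Since $w_\infty$ is (in particular) a viscosity supersolution of $Lu-c_\infty u=0$ at $0$ in the limit, the constant test function $\varphi\equiv m$ touching $w_\infty$ from below at $0$ yields, after the usual mixing of $\varphi$ near $0$ and $w_\infty$ away from $0$ in the nonlocal part,
$$0 \;\leq\; \int_{\Rd}\bigl(w_\infty(y)-m\bigr)\,j(y)\,\D{y} \;\leq\; \bar c\cdot m \;<\; 0,$$
where the leftmost inequality uses $w_\infty\geq m$ together with $\Delta\varphi(0)=\nabla\varphi(0)=0$, and the rightmost uses $\bar c\geq \kappa>0$ and $m<0$. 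This contradicts the nonnegativity of the integral and forces $m\geq 0$, i.e., $w\geq 0$ on $\Rd$.

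The main obstacle I expect is rigorously transferring the viscosity inequality through the translation-and-limit procedure under the bare assumption that $c$ is continuous and bounded. Because the inequality is needed only at a single point, all that is genuinely used is pointwise convergence $c(x_n)\to \bar c$, which follows from Bolzano--Weierstrass; the locally uniform convergence of $w_n$ is the more delicate ingredient and is supplied by the H\"older regularity of \cite{Mou19}. A conceptually simpler alternative is to add a coercive perturbation $\varepsilon\eta$ to $w$ so that $w+\varepsilon\eta$ attains a minimum, but choosing $\eta$ with $L\eta$ bounded typically requires integrability of $j$ at infinity stronger than \eqref{kernel}, so the translation route is preferable.
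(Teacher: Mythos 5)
Your overall strategy coincides with the paper's: argue by contradiction with $m=\inf_{\Rd}w<0$, translate along a minimizing sequence, use the uniform H\"older estimate of \cite{Mou19} together with Arzel\`a--Ascoli to extract a locally uniform limit $w_\infty$ attaining the global minimum $m$ at $0$, and test with the constant $\varphi\equiv m$ to contradict $c\geq\kappa>0$. The genuine gap is in the step where you transfer the viscosity inequality to the limit. Your claim that ``all that is genuinely used is pointwise convergence $c(x_n)\to\bar c$'' is not correct as stated: stability of viscosity supersolutions under locally uniform convergence requires control of the right-hand side $c_n w_n$ at the touching points of the approximating functions, and these are nearby points $x_n+y_n$ with $y_n\to 0$, not the points $x_n$ themselves. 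Since $c$ is only bounded and continuous (not uniformly continuous), $c(x_n+y_n)$ need not converge to $\bar c$, there is no well-defined limit coefficient $c_\infty$ with $Lw_\infty-c_\infty w_\infty\leq 0$ near $0$, and ``viscosity supersolution at a single point in the limit'' is not a notion you can invoke without an argument. As written, the middle inequality in your display $0\leq\int_{\Rd}(w_\infty(y)-m)j(y)\,\D{y}\leq\bar c\, m$ is therefore unjustified.

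The missing ingredient is precisely the paper's localization step. Using the uniform H\"older bound together with $w\geq 0$ on $U^c$ and $w(x_k)\leq m/2$, one first shows $\dist(x_k,U^c)\geq\delta$ for a fixed $\delta>0$, and, shrinking $\delta$, that $w<m/4$ on $B_\delta(x_k)\subset U$; hence $c\,w\leq\kappa m/4$ there, so the translates satisfy $Lv_k\leq\kappa\frac{m}{4}$ in the fixed ball $B_\delta(0)$ with a \emph{constant} right-hand side, to which standard stability applies. Passing this inequality (rather than the original equation) to the limit and testing with $\varphi\equiv m$ at $0$ gives $0\leq L\varphi(0)\leq\kappa\frac{m}{4}<0$, the desired contradiction. (Alternatively, one could run the stability argument by hand and note that any touching point has $w$-value close to $m<0$, hence lies in $U$ where $c\geq\kappa$, so the right-hand side is at most $\kappa m/2$; but some such argument is required, and the pointwise convergence $c(x_n)\to\bar c$ you rely on does not supply it.)
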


\begin{proof}
Suppose, to the contrary, that $m = \inf_{\Rd} w <0$. If $m$ is attend then
the proof can be completed from maximum principle.
In general, without loss of generality,  we may choose a sequence $x_k\in \Rd$ satisfying
\begin{equation}\label{EL4.10}
\lim_{k \to \infty} w(x_k) = m \quad \text{and} \quad w(x_k) \leq \frac{m}{2} < 0 \quad \forall\; k \in \NN .
\end{equation}
By given condition, for every $k \in \NN$, we have
$$
x_k \in U \quad \text{and} \quad c(x_k) \geq \kappa >0 .
$$
By \cite[Theorem~4.1]{Mou19} there exist $\hat\kappa, \alpha>0$, dependent on $j, d,\norm{c}_{L^\infty (\Rd)}$, such that 
\begin{equation}\label{EL4.1A}
\sup_{x\neq y: x, y\in B_{\nicefrac{1}{2}}(0)} \frac{|w(x)-w(y)|}{|x-y|^{\alpha}}
\leq \hat\kappa \, \norm{w}_{L^\infty (\Rd)}.
\end{equation}
Translating the center of the ball it is evident from \eqref{EL4.1A} that
$w\in\cC^{\alpha}_b(\Rd)$. We note that for some $\delta>0$ we have
$\dist(x_k, U^c)>\delta$. Otherwise, along some subsequence, we must have
$|x_k-z_k|\to 0$ as $k\to \infty$, for some $z_k\in U^c$. Since 
$w\in\cC^\alpha_b(\Rd)$ and $w\geq 0 $ in $U^c$, we get
$$w(x_k)\geq w(z_k)-\hat\kappa |x_k-z_k|^\alpha\to 0, \quad \text{as}\;k\to\infty\,.$$
This is a contradiction to \eqref{EL4.10}. Thus, we
must find $\delta>0$ so that $B_\delta(x_k)\Subset U$.

Let us now define $v_k(x)=w(x_k+x)$. 
Using \eqref{EL4.10} and \eqref{EL4.1A}, we restrict $\delta$ small enough
so that 
$$w(y)<\frac{m}{4}\quad \text{in}\; B_\delta(x_k), \quad \text{for all}\; k.$$
Thus, by the given condition on $c$, it follows that
\begin{equation}\label{EL4.1B}
L v_k\leq \kappa\frac{m}{4}\quad \text{in}\; B_\delta(0).
\end{equation}
Since $\{v_k\}$ forms a equicontinuous
family, using Arzel\`a-Ascoli theorem, we can find a $v\in\cC_b(\Rd)$
satisfying $v_k\to v$ along some subsequence, uniformly over compacts.
Using the stability property of viscosity supersolutions, it follows from
\eqref{EL4.1B} that
\begin{equation}\label{EL4.1C}
Lv\leq \kappa\frac{m}{4}\quad \text{in}\; B_\delta(0).
\end{equation}
On the other hand
$$
v(0) = \lim_{k \to \infty} v_k(0) = \lim_{k \to \infty} w(x_k) = m  \leq  
\lim_{k\to\infty}w(x+x_k) = v(x), 
$$
for all $x \in \Rd$.
Thus $x = 0$ is a minimum point for $v$ in $\Rd$.  Then $\varphi\equiv m$
is a bonafide test function at $x=0$. From \eqref{EL4.1C} we then obtain
$$0>\frac{m}{4}\kappa\geq L\varphi(0)\geq 0,$$
which is a contradiction. Therefore, we must have $m\geq 0$ which completes
the proof.
\end{proof}

Now we can complete the proof of \cref{T4.2}.
\begin{proof}[Proof of \cref{T4.2}]
We broadly follow the idea of \cite{FV11,BVDV} without imposing any stronger
regularity assumption on $u$.
Fix a unit vector $\nu=(\nu_1, \ldots, \nu_d)$ such that $\nu_d > 0$ 
and we write $\nu = (\nu^{\prime}, \nu_d)$.  We also define
$$
\Gamma_h[u](x) = u(x+ h \nu) - u(x).
$$
We first show that $\Gamma_h[u](x) >0$ for all $x \in \Rd$ and for all $h >0$. Observe that 
$$
f(u(x+h \nu)) - f(u(x)) = c_h(x) \Gamma_h[u](x),
$$
where 
$$c_h(x) = \int_0^1 f^{\prime}\left( (1-t) u(x) + tu(x+h\nu)   \right) \D{t}. 
$$
Using \eqref{E4.5} we can choose $\delta \in (0 , \frac{1}{2})$ such that $f^{\prime} \geq \kappa_1$ in $(- \infty, -1+\delta ] \cup [ 1- \delta , \infty)$ for some $\kappa_1>0$. Again, by \eqref{E4.4},
we may take $M>0$ satisfying
\begin{equation}\label{ET4.2A}
u(x) \geq  1- \delta \quad \text{for} \quad x_d \geq M, \quad \text{and} \quad u(x) \leq -1+ \delta \quad \text{for} \quad x_d \leq -M .
\end{equation}
{\bf Claim}: If $x \in \{\Gamma_h[u] < 0\}  \cap \{|x_d| \geq M\}$, then $c_h(x) \geq \kappa_1$. 

If $x \in \{\Gamma_h[u] < 0\}\cap\{ x_d  \leq -M\}$,
then $u(x + h \nu) < u(x) \leq -1 + \delta$, by \eqref{ET4.2A}, which implies
$$
(1-t) u(x) + t u(x + h \nu)  \leq - 1+ \delta ,
$$
for all $t \in [0,1]$. Similarly, if $x \in\{\Gamma_h (u) < 0\}\cap \{x_d  \geq M\}$, then $ u(x) > u(x + h \nu) \geq 1 - \delta$, since $x_d + h \nu_d > x_d \geq M$. Thus
$$
(1-t) u(x) + t u(x + h \nu)  \geq  1- \delta ,
$$
for all $t \in [0,1]$.  Hence we have $c_h(x) \geq \kappa_1$ for  $x\in\{\Gamma_h[u] < 0 \}\cap \{|x_d| \geq M\}$.

Next we claim that if $h \geq \frac{2M}{\nu_d}$, then $\Gamma_{h}[u](x) > 0$ for any $x \in \Rd$. 
Fix any $h\geq \frac{2M}{\nu_d}$ and let
$U = \{ \Gamma_h (u) < 0 \}$. 
For $x_d = -M$ we have $x_d+h \nu_d \geq M$ and therefore,
$$
\Gamma_h[u](x) \geq \inf_{x_d \geq M} u(x) - \sup_{x_d \leq -M} u(x) 
\geq 1-\delta- (-1+\delta)= 2(1-\delta) >0.
$$
Thus $U\subset\{x_d=-M\}^c$. We write $U=U^{-}\cup U^{+}$ where
$U^{-}=U\cap\{x_d < -M\}$ and $U^{+}=U\cap\{x_d > -M\}$.
By above claim, $c_h(x) \geq \kappa_1$ for $x \in U^{-}$.  
Again, for $x \in U^{+}$ we have $x_d + hv_d > M$ by our choice of $h$
and hence, we have $u(x) > u(x + h\nu) \geq 1 - \delta$. Thus
$$
(1-t) u(x) + t u(x + h \nu)  \geq  1- \delta ,
$$
for all $t \in [0,1]$.  Hence we have $c_h(x) \geq \kappa_1$ 
for all $x \in U$ and $\Gamma_h[u] \geq 0$ in $U^c$.  Applying
\cref{T1.2} we also have 
$$L \Gamma_h[u] = c_h\, \Gamma_h[u]\quad  \text{in}\; \Rd.$$
By \cref{L4.1} we then obtain $\Gamma_h[u] \geq 0$ in $\Rd$.  
We can apply
strong maximum principle \cref{T2.3} to get $\Gamma_h (u)(x) > 0$ in $\Rd$,  since $\Gamma_h[u]> 0$ on $\{x_d = -M\}$. This proves
the claim that $\Gamma_h[u] > 0$ in $\Rd$ and for 
$h \geq \frac{2M}{\nu_d}$. Define
\begin{equation}\label{ET4.2A0}
h_\circ = \inf\{ h>0 \;: \; \Gamma_s[u](x) > 0 \; \text{for all} \; x \in \Rd \; \text{with} \; \vert x_d \vert \leq M, \; \text{for all}\; s\geq h \} .
\end{equation}
From the above argument we have $h_\circ\in[0, \frac{2M}{\nu_d}]$. 
We show that $h_\circ = 0$. Suppose, to the contrary, that $h_\circ>0$.
Then for any $\varepsilon \in (0, h_\circ)$
$$
\Gamma_{h_\circ+\varepsilon}[u](x)=u(x + (h_\circ + \varepsilon) \nu) -u(x) > 0 \; \text{for all} \; x \in \{ \vert x_d \vert \leq M \},$$
and
$$
\Gamma_{h_\circ-\varepsilon_k}[u](x_k)=u(x_k + (h_\circ - \varepsilon_k)\nu) - u(x_k) \leq 0 \; \text{for some} \; x_k \in \{ \abs{x_d} \leq M \} ,
$$
for any sequence $\varepsilon_k\to 0$ as $k\to\infty$.  Since
$u$ is continuous, we obtain
\begin{equation}\label{ET4.2B}
\Gamma_{h_\circ}[u](x) = \lim_{\varepsilon \to 0}u(x +(h_\circ + \varepsilon) \nu) - u(x) \geq 0,
\end{equation}
for any $x \in \{ \vert x_d \vert \leq M \}$.  Repeating an
argument similar to above would give $\Gamma_{h_0}[u] \geq 0 $ in $\Rd$.
Now we define $w_k = u(x+x_k)$. Since $u\in\cC^\alpha_b(\Rd)$ by \eqref{EL4.1A}, $\{w_k\}$ is equicontinuous and therefore,
 $w_k \to w_{\infty}$ along some subsequence,  uniformly on compacts.
As a consequence, 
\begin{align*}
c_k(x):=c_{h_\circ-\varepsilon_k}(x+x_k)
&=\int_0^1 f^{\prime}\left( (1-t) w_k(x) + t w_k(x+(h_\circ-\varepsilon_k)\nu)\right)\D{t}
\\
&\to \int_0^1 f^{\prime}\left( (1-t) w_\infty(x) + t w_\infty(x+h_\circ \nu)\right)\D{t}:=c_\infty(x)
\end{align*}
uniformly over compacts. From the stability property of viscosity solution
we then obtain
$$
L\, \Gamma_{h_\circ}[w_{\infty}](x) = f(w_{\infty}(x + h_\circ \nu)(x)) - f(w_{\infty}(x)) = c_{\infty}(x)\Gamma_{h_\circ}[w_{\infty}](x)
\quad \text{in}\; \Rd.
$$
Again, by \eqref{ET4.2B}
$$\Gamma_{h_\circ}[w_\infty](x)
= \lim_{k\to\infty} \Gamma_{h_\circ}[u](x+x_k)\geq 0.$$
Also, from \eqref{EL4.1A} we have
\begin{align*}
\Gamma_{h_\circ}[w_{\infty}](0) &= \lim_{k \to \infty} u(x_k + h_\circ \nu) - u(x_k)
\\
&\leq  \lim_{k \to \infty} u(x_k + (h_\circ - \varepsilon_k)  \nu) - u(x_k) + (\varepsilon_k)^{\alpha} \norm{u}_{\cC^{\alpha}} 
\\
&\leq 0 .
\end{align*}
Hence $\Gamma_{h_\circ} w_{\infty}(0) = 0$. By strong maximum principle (\cref{T2.3}) we must have $\Gamma_{h_\circ} w_{\infty}(x) = 0$ for all $x \in \Rd$.
By a simple iteration this also gives
$$
w_{\infty}(x + jh_\circ \nu) = w_{\infty}(x) 
$$
for any $x$ and $j \in \ZZ$.  Choosing $j \in \NN \cap [ \frac{2M}{h_\circ \nu_d} , \infty)$ we see that $j h_\circ \nu_d + (x_d)_k \geq M$ and $-j h_\circ \nu_d + (x_d)_k \leq -M$ (since $x_k\in\{|x_d|\leq M\}$)
and therefore, by \eqref{ET4.2A} we obtain $u( x_k +jh_\circ \nu) \geq 1-\delta $ and $u(x_k - jh_\circ\nu ) \leq -1 + \delta$ for all $k$. 
Hence
\begin{align*}
2(1-\delta) &\leq \lim_{k \to \infty} u( x_k +jh_\circ \nu)  - u(x_k - j h_\circ\nu ) 
\\
&= w_{\infty} (jh_\circ \nu) - w_{\infty} (- jh_\circ \nu)=0,
\end{align*}
which is a contradiction. Thus $h_\circ$ in \eqref{ET4.2A0} must be
$0$. In other words,
for any $h> 0$,  $\Gamma_h[u]> 0$ in $\{ \vert x_d \vert \leq M \}$.
Since $c_h(x) > \kappa_1 $ for any $x \in \{ \Gamma_h [u] < 0\} \cap \{\vert x_d \vert \geq M \} $,  by the same argument as above 
we have $\Gamma_h[u] > 0$ in  $\Rd$ .

Thus we have proved that $ \Gamma_{h}^{\nu}[u](x)\df u(x+ h \nu) - u(x) \geq 0$ for all  $\nu \in S^{d-1}$ with $\nu_d > 0$ and all $h \geq 0$. Taking $\mu = - \nu$ we obtain for all $h\geq 0$ that
$$
\Gamma_h^{\mu}[u] \leq 0\quad \text{for all}\; x \in \Rd, \; \text{and} \; \text{for all}\; \mu \in \mathbb{S}^{d-1} \; \text{with} \; \mu_d < 0,
$$
as
\begin{align*}
\Gamma_h^{\mu}[u](x) &= u(x+ h \mu) - u(x) = 
u(\tilde{x}) -u(\tilde{x}+ h (-\mu))
\\
&= - (u(\tilde{x}+ h \nu) - u(\tilde{x})) = - \Gamma_h^{\nu}[u](\tilde{x})
\leq 0\, ,
\end{align*}
where $\tilde{x} = x + h \mu$. Now letting $\mu_d\nearrow 0$ and
$\nu_d\searrow 0$ it follows from above that
$$
\Gamma_h^{\omega}[u] = 0 \;\; \text{for all}\; x \in \Rd, \;\; \text{and} \; \;\text{for all}\; \omega \in \mathbb{S}^{d-1} \; \text{with} \; \omega_d = 0.
$$ 
In particular, this gives $\partial_{x_i} u=0$ for $i=1, 2,\ldots, d-1$.
Again, $u_0$ is strictly increasing follows from \eqref{ET4.2A0} and the fact that $h_\circ=0$.
This completes the proof of the theorem.

\end{proof}

%%%%%%%%%%%%%%%%%%%%%%%%%%%%%%%%%%%%%%%%%%%%%%%%%%%%%%%%%%%%%%%%%%%%%%%%%
\section{Existence and uniqueness results}\label{S-EU}
Our goal in this section is to prove the existence of a unique viscosity 
solution to
\begin{equation}\label{E5.1}
Lu = -f \quad \text{in}\; D, \quad \text{and}\quad 
u=g \quad \text{in}\; D^c,
\end{equation}
where $f\in \cC(\bar{D})$ and $g\in \cC(D^c)$.
Denote by $\cC^2_b(x)$ the space of all bounded  functions
in $\Rd$ that are twice continuously differentiable in some neighbourhood around $x$. Also, recall the definition
of viscosity solutions from \cref{Defi1.1}.
%\begin{definition}[Viscosity solution]
%A function $u:\Rd\to\RR$, upper-semicontinuous in $\bar{D}$, is said to be a viscosity subsolution of 
%$L u = f$ in $D$, denoted by $Lu\geq f$ in $D$,
%if for every $x\in D$ and a test function $\varphi\in\cC^2_b(x)$ satisfying 
%$\varphi(x)=u(x)$ and
%$\varphi(y)> u(y)$ for $y\in\Rd\setminus\{x\}$ we have 
%$L \varphi(x)\geq f(x)$.
%
%We say $u$ is a viscosity supersolution of $L u = f$ in $D$, 
%if $-u$ is a viscosity subsolution of $L u = -f$ in $D$.
%Furthermore, $u$ is said to be a viscosity solution if it is both a viscosity sub and super-solution.
%\end{definition}
%The above definition of subsolution can also be modified by allowing 
%$\varphi$ to touch $u$ locally from above and then a valid test function
%is created by replacing $\varphi$ by $u$ outside a small ball. See for instance, 
%Barles and Imbert \cite{Barles08a}, Caffarelli and Silvestre \cite{CS09}.

First we prove \cref{T1.2}.
To do so, we need the notion inf and sup convolution.
Given a bounded, upper-semicontinuous function $u$, the sup-convolution
approximation $u^{\varepsilon}$ is given by
$$ u^{\varepsilon} (x) 
= \sup_{y \in \Rd} u(x+y) - \frac{\abs y^2}{\varepsilon} 
= \sup_{y \in \Rd} u(y) - \frac{\vert x- y\vert^2}{\varepsilon}
= u(x^*) - \frac{\abs{x-x^*}^2}{\varepsilon}.$$
Likewise, for a bounded and lower-semicontinuous function $u$, the inf-convolution $u_{\varepsilon}$ is given by
$$ u_{\varepsilon} = \inf_{y \in \Rd} u(x+y) + \frac{\abs y^2}{\varepsilon} =   \inf_{y \in \Rd} u(y) + \frac{\vert x- y\vert^2}{\varepsilon}.$$

\begin{lemma}\label{L5.1}
Let $D$ be an open bounded set and $f$ is continuous function in $D$.  If $u$ is a bounded, upper-semicontinuous function such that $L u \geq f$ in $D$, then $L u^{\varepsilon} \geq f - d_{\varepsilon}$ in $D_1 \Subset D$ where $d_{\varepsilon} \rightarrow 0$ in $D_1$, as $\varepsilon \rightarrow 0 $, and depends on the modulus of continuity of $f$.

An analogous statement also holds for supersolutions.
\end{lemma}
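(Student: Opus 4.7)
The plan is to pull back a test function on $u^\varepsilon$ to a test function on $u$ via a translation, and then exploit the translation invariance of $L$.

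I will first record the basic properties of the sup-convolution. Since $u$ is bounded and upper-semicontinuous, for each $x$ the supremum defining $u^\varepsilon(x)$ is attained at some $x^* \in \Rd$, and since $u(x^*) - \frac{|x-x^*|^2}{\varepsilon} \geq u^\varepsilon(x) \geq u(x) \geq -\norm{u}_\infty$ while $u(x^*) \leq \norm{u}_\infty$, one obtains the quantitative bound $|x - x^*| \leq \sqrt{2\varepsilon\,\norm{u}_\infty}$. Fixing $D_1 \Subset D$ and choosing $\varepsilon$ small enough that $\sqrt{2\varepsilon\,\norm{u}_\infty} < \dist(D_1, \partial D)$, this guarantees $x^* \in D$ whenever $x \in D_1$, so the subsolution property of $u$ is applicable at $x^*$. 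I will also note, as is standard, that $u^\varepsilon$ is bounded and upper-semicontinuous (indeed semiconvex), so it is a legitimate candidate for the viscosity definition.

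Next, fix $x \in D_1$ and suppose $\varphi \in \cC^2_b(x)$ is a test function with $\varphi(x) = u^\varepsilon(x)$ and $\varphi > u^\varepsilon$ on $\Rd\setminus\{x\}$. Set $h := x - x^*$ and define
\[
\tilde\varphi(y) := \varphi(y+h) + \frac{|h|^2}{\varepsilon}.
\]
Then $\tilde\varphi \in \cC^2_b(x^*)$ because $\varphi$ is $C^2$ in a neighbourhood of $x = x^*+h$. Evaluating, $\tilde\varphi(x^*) = \varphi(x) + |h|^2/\varepsilon = u^\varepsilon(x) + |h|^2/\varepsilon = u(x^*)$. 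For $y \neq x^*$, using $y + h \neq x$ and the strict inequality $\varphi(y+h) > u^\varepsilon(y+h)$ combined with the elementary estimate $u^\varepsilon(y+h) \geq u(y) - |h|^2/\varepsilon$ (take $z = y$ inside the supremum defining $u^\varepsilon(y+h)$), one gets $\tilde\varphi(y) > u(y)$. Thus $\tilde\varphi$ is a valid test function touching $u$ strictly from above at $x^*$.

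Since $Lu \geq f$ in $D$ in the viscosity sense and $x^* \in D$, this yields $L\tilde\varphi(x^*) \geq f(x^*)$. The operator $L$ is translation invariant (the Laplacian is, and the nonlocal kernel $j(y)\D{y}$ does not depend on $x$), so $L\tilde\varphi(x^*) = L\varphi(x^* + h) = L\varphi(x)$; the constant $|h|^2/\varepsilon$ is annihilated by both the Laplacian and the integro-differential part. Using uniform continuity of $f$ on $\overline{D}$, with modulus $\omega_f$, together with $|x - x^*| \leq \sqrt{2\varepsilon\norm{u}_\infty}$, we obtain
\[
L\varphi(x) \geq f(x^*) \geq f(x) - \omega_f\!\bigl(\sqrt{2\varepsilon\norm{u}_\infty}\bigr).
\]
Setting $d_\varepsilon := \omega_f(\sqrt{2\varepsilon\,\norm{u}_\infty})$ gives $Lu^\varepsilon \geq f - d_\varepsilon$ in $D_1$ and $d_\varepsilon \to 0$ as $\varepsilon \to 0$, as required. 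The inf-convolution version is obtained verbatim by replacing $u$ by $-u$ and reversing inequalities.

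The only delicate point, rather than a genuine obstacle, is verifying that the translated test function $\tilde\varphi$ retains the strict domination property $\tilde\varphi > u$ off the contact point; this is what forces the two-step inequality above and requires the elementary estimate on $u^\varepsilon(y+h)$. Ensuring $x^* \in D$ (which is what confines the conclusion to $D_1 \Subset D$ rather than all of $D$) is handled by the quantitative control $|x - x^*| \leq \sqrt{2\varepsilon\norm{u}_\infty}$, and the translation invariance of $L$—crucial to the whole reduction—is immediate from the $x$-independence of the kernel $j$.
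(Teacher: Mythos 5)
Your proof is correct and follows essentially the same route as the paper's: translate the test function by $x-x^{*}$, add the constant $\lvert x-x^{*}\rvert^{2}/\varepsilon$, use that $x^{*}\in D$ for $x\in D_{1}$ and $\varepsilon$ small, invoke the translation invariance of $L$, and absorb $f(x^{*})-f(x)$ into a modulus-of-continuity term $d_{\varepsilon}$. The only cosmetic difference is that you verify the strict global domination $\tilde\varphi>u$ directly (via $u^{\varepsilon}(y+h)\geq u(y)-\lvert h\rvert^{2}/\varepsilon$), whereas the paper touches locally and patches with $u$ outside a ball; both implement the same idea.
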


\begin{proof}
Fix $x_0 \in D_1 $, and let $\varphi$ be a test function that touches $u^{\varepsilon}$ from above at $x_0$ in some neighbourhood  $B_r (x_0) \subset D_1$  and $\varphi = u^{\varepsilon}$ in $B_r^c(x_0)$.  We define
$$
Q(x) = \varphi(x+x_0 - x_0^{\ast}) + \frac{1}{\varepsilon} \vert x_0 - x_0^{\ast} \vert^2.
$$
 We observe from the definition of $u^\varepsilon$ that $|x_0 - x_0^{\ast} | \leq M$  where $M = (2 \Vert u \Vert_{L^{\infty}})^{1/2}$. 
Hence we can pick $\varepsilon_1 $ such that for all $\varepsilon \leq \varepsilon_1$ and $x_0 \in D_1$ we have $x_0^{\ast} \in D$.
It then follows from the definition that $u(x) \leq u^{\varepsilon } (x+x_0 - x_0^{\ast}) + \frac{1}{\varepsilon} \vert x_0 - x_0^{\ast} \vert^2$.
Thus, for $\vert x - x_0^{\ast} \vert < r$ we then get
$$
u(x) \leq \varphi(x+x_0 - x_0^{\ast}) + \frac{1}{\varepsilon} \vert x_0 - x_0^{\ast} \vert^2 = Q(x)
$$
and $u(x_0^{\ast}) = Q(x_0^{\ast})$. Hence Q touches $u$ by above at $x_0^{\ast}$ in $B_r(x_0^{\ast})$. Define
$$
w(x) = \begin{cases}
Q(x) & x \in B_r(x_0^{\ast}), \\[2mm]
u(x) & x \in \Rd \setminus B_r(x_0^{\ast}).
\end{cases}
$$
Then by the definition of viscosity subsolution we have
$Lw(x_0^{\ast})  \geq f(x_0^{\ast})$, that is,
\begin{align*}
\Delta Q(x^*_0) + \int_{\Rd} (w(x_0^{\ast} + y) - Q(x_0^{\ast})-\Ind_{\{|y|\leq 1\}} y\cdot\grad Q(x_0^*) ) j(y) \D{y}  \geq f(x_0^{\ast}).
\end{align*}
Now we observe that $\Delta Q(x_0^{\ast}) = \Delta\varphi(x_0)$, $\grad Q(x^*_0)=\grad\varphi(x_0)$ and 
$Q(x^*_0+y)-Q(x^*_0)=\varphi(x_0+y)-\varphi(x_0)$.
Since $\varphi\geq u^\varepsilon\geq u$, we obtain
$$
L\varphi(x_0)  \geq f(x_0) - \vert f(x_0^{\ast}) - f(x_0) \vert.
$$
Since $\vert x_0 - x_0^{\ast} \vert \leq M  \varepsilon^{\frac{1}{2}}$, 
 choosing $d_{\varepsilon}(x) = \sup_{y \in B_{M\sqrt{\varepsilon}} (x_0)} \vert f(x_0^{\ast}) - f(x_0) \vert$ gives us the desired result.
\end{proof}

Next we show that difference of sub and supersolution gives us a 
subsolution.
\begin{lemma}\label{L5.2}
Let $D$ is open bounded set,  $u$ and $v$ be two bounded functions such that u is upper-semicontinuous and v is lower-semicontinuous in $\Rd$, $L u \geq f$ and $L v \leq g$   in $D$ for two continuous function $f$ and $g$ then $L (u-v) \geq f-g$ in $D$.
\end{lemma}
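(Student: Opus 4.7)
The strategy is classical: replace $u$ and $v$ by their sup- and inf-convolutions respectively, carry out the subtraction at the level of these regularizations where everything can be evaluated classically, and then pass to the limit.

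First I would invoke \cref{L5.1} to obtain, on any $D_1 \Subset D$, that $L u^{\varepsilon} \geq f - d_{\varepsilon}$ and $L v_{\varepsilon} \leq g + \tilde{d}_{\varepsilon}$ in the viscosity sense, with error terms tending to $0$ uniformly as $\varepsilon \to 0$. The crucial structural property of these regularizations is that $u^{\varepsilon}(x) + \varepsilon^{-1}|x|^2$ is convex while $v_{\varepsilon}(x) - \varepsilon^{-1}|x|^2$ is concave; in particular $u^{\varepsilon}$ is semi-convex and $v_{\varepsilon}$ is semi-concave, so both are locally Lipschitz and twice pointwise differentiable almost everywhere by Alexandrov's theorem. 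At each such point of twice differentiability, the nonlocal integral in $L$ is absolutely convergent thanks to the second-order expansion near the singularity combined with the tail bound \eqref{kernel} and the boundedness of the functions.

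The next step is to show that at almost every point of $D_1$ the viscosity inequalities for $u^{\varepsilon}$ and $v_{\varepsilon}$ can be read classically, i.e.\ $L u^{\varepsilon} \geq f - d_{\varepsilon}$ and $L v_{\varepsilon} \leq g + \tilde{d}_{\varepsilon}$ in the pointwise sense. This follows by approximating the Alexandrov expansion at a point of twice differentiability from above by $\cC^2$ paraboloids, testing the viscosity inequality against each such paraboloid, and letting the paraboloid open angle shrink to the Alexandrov Hessian, using dominated convergence on the nonlocal part (the uniform semi-convex lower bound provides a dominating integrand). By linearity,
$$L w^{\varepsilon} := L(u^{\varepsilon} - v_{\varepsilon}) = L u^{\varepsilon} - L v_{\varepsilon} \geq f - g - (d_{\varepsilon} + \tilde{d}_{\varepsilon})$$
at almost every point of $D_1$, where $w^{\varepsilon}$ is itself semi-convex as a sum of two semi-convex functions $u^{\varepsilon}$ and $-v_{\varepsilon}$.

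This a.e.\ classical inequality can then be upgraded to a viscosity inequality for $w^{\varepsilon}$: given a test function $\varphi$ touching $w^{\varepsilon}$ from above at some $x_0 \in D_1$, I would pick a sequence $x_k \to x_0$ at which $w^{\varepsilon}$ is twice differentiable with second-order expansion converging to that of $\varphi$ at $x_0$, apply the pointwise inequality at $x_k$, and send $k \to \infty$ using continuity of $f$, $g$ and $d_{\varepsilon}+\tilde{d}_{\varepsilon}$ together with dominated convergence for the integral term (controlled by the common semi-convex lower bound). To pass from $w^{\varepsilon}$ to $u-v$, I would then use the monotone convergence $u^{\varepsilon} \downarrow u$ and $v_{\varepsilon} \uparrow v$ (since $u$ is upper-semicontinuous and $v$ is lower-semicontinuous), so that $w^{\varepsilon} \downarrow u - v$; the standard stability of viscosity subsolutions under half-relaxed limits, combined with $d_{\varepsilon} + \tilde{d}_{\varepsilon} \to 0$, yields $L(u-v) \geq f-g$ in the viscosity sense in $D_1$, and hence in all of $D$ by the arbitrariness of $D_1$. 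The main obstacle is the middle step, namely lifting the a.e.\ classical inequality for $w^{\varepsilon}$ back to a bona fide viscosity inequality, because it requires careful control of the nonlocal integrand both near the singularity (where semi-convexity gives only one-sided paraboloid bounds rather than genuine $\cC^2$-regularity) and at infinity (where one needs the tail integrability and uniform-in-$\varepsilon$ boundedness of the regularizations).
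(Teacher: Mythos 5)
Your overall scheme (sup/inf convolutions, \cref{L5.1}, classical evaluation at points of twice differentiability, then stability as $\varepsilon \to 0$) is the same circle of ideas the paper uses, and the first and last steps of your outline are sound: at an Alexandrov point a sub/supersolution can indeed be evaluated classically (touch with paraboloids of Hessian $A+\eta I$, use the one-sided quadratic bound coming from semiconvexity/semiconcavity together with \eqref{kernel} and dominated convergence, then let $\eta\to 0$), and the monotone convergences $u^{\varepsilon}\downarrow u$, $v_{\varepsilon}\uparrow v$ combined with half-relaxed-limit stability and $d_{\varepsilon}\to 0$ handle the passage from $u^{\varepsilon}-v_{\varepsilon}$ to $u-v$.

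The gap is exactly where you locate ``the main obstacle,'' and as written it is not closed. From the a.e.\ classical inequality for the semiconvex function $w^{\varepsilon}=u^{\varepsilon}-v_{\varepsilon}$ you propose to ``pick a sequence $x_k\to x_0$ at which $w^{\varepsilon}$ is twice differentiable with second-order expansion converging to that of $\varphi$ at $x_0$.'' Almost-everywhere twice differentiability by itself provides no such sequence: the touching point $x_0$ may lie in the exceptional null set, and at nearby Alexandrov points there is no reason the gradient and Hessian of $w^{\varepsilon}$ should be compatible with $D\varphi(x_0)$ and $D^2\varphi(x_0)$. What produces such points is a measure-theoretic statement, namely Jensen's lemma for semiconvex functions: after a small linear perturbation $x\mapsto w^{\varepsilon}(x)-\varphi(x)-p\cdot x$ with $\abs{p}$ small, the set of maximum points has positive Lebesgue measure, hence meets the full-measure set where your a.e.\ inequality holds; at those points one gets $Dw^{\varepsilon}(x_k)\to D\varphi(x_0)$ and $D^2w^{\varepsilon}(x_k)\leq D^2\varphi(x_k)$, which is the one-sided information you need, and the nonlocal term is then controlled by the reverse-Fatou argument you describe. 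Jensen's lemma is itself proved by the ABP/convex-envelope measure estimate, i.e.\ by \cite[Lemma~3.5]{CC} --- which is precisely the tool the paper deploys, only organized differently: instead of first proving an a.e.\ inequality and then upgrading it, the paper takes the test function $P$ touching $u^{\varepsilon}-v_{\varepsilon}$ from above at $x_0$, builds the auxiliary function $w=v_{\varepsilon}-u^{\varepsilon}+\phi+\delta(\abs{\cdot-x_0}\wedge r)^2-\delta r_1^2$, and applies the convex-envelope estimate to produce a single contact point $x_1^{\delta}$ where $u^{\varepsilon}$ and $v_{\varepsilon}$ are simultaneously punctually second-order differentiable and where convexity of $\Gamma_w$ supplies the sign of $Lw$, after which the inequality is transported back to $x_0$ by letting $r_1\to 0$ and $\delta\to 0$. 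So your route can be completed, but only by importing Jensen's lemma (or reproducing the \cite[Lemma~3.5]{CC} argument); that is the substantive content your sketch leaves unproven.
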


\begin{proof}
Fix $D_1 \Subset D$ and $\varepsilon >0 $. Let 
$P\in\cC^2_b(x_0)$ be such that $u^{\varepsilon} - v_{\varepsilon} \leq P$ in $B_r(x_0) \subset D_1$ and $u^{\varepsilon} (x_0)- v_{\varepsilon}(x_0) = P(x_0)$. Without loss of generality  we may also assume that
$P$ is a paraboloid and $B_{2r} (x_0) \subset D$.  Take $\delta > 0$ and define
$$
w(x) = v_{\varepsilon}(x) - u^{\varepsilon}(x) + \phi(x) + \delta (\vert x - x_0 \vert\wedge r)^2-  \delta r_1^2 ,
$$
where $0 < r_1 < \delta\wedge \frac{r}{2}$ and
$$
\phi(x) = \begin{cases}
P(x) & x \in B_r(x_0) \\[2mm]
u^{\varepsilon}(x) - v_{\varepsilon}(x) & x \in \Rd \setminus B_r(x_0).
\end{cases}
$$
We see that $w \geq 0$ on $\partial B_{r_1}(x_0)$,   $w(x_0) < 0$ and $w > \frac{3\delta}{4} r^2$ on $\Rd \setminus B_r(x_0)$.  For any $x \in \overline{B}_{r_1}(x_0)$ there exists a convex paraboloid $P^x$ on opening $K$ such that it touches $w$ from above at $x$ in $B_{r_1}(x)$, 
where $K (={4}/{\varepsilon})$ is a constant independent of $x$. Using \cite[Lemma~3.5]{CC}  and $w(x_0) < 0$ we obtain
\begin{equation}\label{EL1.2A}
0 < \int_{A \cap \left\lbrace w = \Gamma_w \right\rbrace } {\rm det} D^2\, \Gamma_w,
\end{equation}
where $\Gamma_w$ is the convex envelope of $w$ in $B_{2r}(x_0)$ and
 $A \subset B_{r_1}(x_0)$ satisfies $\vert B_{r_1}(x_0) \setminus A \vert = 0$, $\Gamma_w$ is second order differentiable 
 in $A$ and $\Gamma_w\in \cC^{1,1} (B_{\frac{r}{2}})$.
Furthermore, $u^{\varepsilon}$,  $v^{\varepsilon}$ (and hence $w$) are punctually second order differentiable in $A$ \cite[Theorem~5.1]{CC}.  Thus 
 $L u^{\varepsilon}(x)$,  $L v^{\varepsilon}(x)$ are defined in the classical sense for $x \in A$ and from \cref{L5.1} we have 
$$
L u^{\varepsilon}(x)\geq f(x) - d_{\varepsilon}  \quad \text{and } \quad L(v^{\varepsilon})(x) \leq g(x) + d_{\varepsilon}.
$$
We note that  since the contact set $\{w=\Gamma_w\}$ are the points of minimum for
$w-\Gamma_w$ and $w$ is differentiable at the points of $A$ (as it is punctually twice differentiable), we have $\grad w= \grad \Gamma_w$ on $A\cap \{w=\Gamma_w\}$.
Therefore, since $\Gamma_{w}$ is convex and $\Gamma_{w} \leq w$, we have
$$
\Delta w(x) \geq 0 \quad \text{ and } \quad 
\int_{x+y\in B_r(x)}(w(x+y)-w(x)-\Ind_{\{|y|\leq 1\}}y\cdot \grad w(x)) j(y)\D{y} \geq 0 \quad \text{for} \; x \in A \cap \left\lbrace w = \Gamma_{w} \right\rbrace,
$$
using the fact that for $x \in A \cap  \left\lbrace w = \Gamma_{w} \right\rbrace$ we have
$$w(x+y) - w(x)-\Ind_{\{|y|\leq 1\}} y\cdot \grad w(x) 
\geq \Gamma_{w}(x+y) -\Gamma_w(x) - \Ind_{\{|y|\leq 1\}} y\cdot \grad w(x) \geq 0 ,$$ for all $x+y\in B_r(x)$. 
%and when $x+y\in B^c_r(x)$ we have
%$w(x+y)-w(x)>0$ for all $r_1$ small. 
%On the other hand, 
%$\grad w(x)= \grad \Gamma_w(x)$ for all $x\in\{w=\Gamma_w\}$ .

Now from \eqref{EL1.2A} it follows that
$$
\vert \left\lbrace w = \Gamma_{w} \right\rbrace \cap A \vert > 0,
$$
and therefore, there is one point $x_1^{\delta} \in \left\lbrace w = \Gamma_{w} \right\rbrace \cap A$ where $L u^{\varepsilon}(x)$,  $L v^{\varepsilon}(x)$ can be computed classically.
At this point we thus have
\begin{align*}
f(x_1^{\delta}) - d_{\varepsilon} &\leq L u^{\varepsilon}(x_1^{\delta}) = 
Lv_{\varepsilon}(x_1^{\delta}) - Lw(x_1^{\delta}) + L\phi(x_1^{\delta})+
\delta L ( \vert  \bullet - x_0 \vert \wedge r)^2(x_1^{\delta}) 
\\
&\leq g(x_1^{\delta}) +d_{\varepsilon} +  L\phi(x_1^{\delta}) - \int_{|y| \geq r}  (w(x_1^\delta + y) -w(x_1^\delta)) j(y) \D{y}  \\
& \quad + 
\int_{r \leq |y|\leq 1} y\cdot \grad \Gamma_w(x^1_\delta) j(y)\D{y} +
\delta L ( \vert  \bullet - x_0 \vert \wedge r)^2(x_1^{\delta}).
\end{align*}
Letting $r_1\to 0$, we see that $x^\delta_1 \to x_0$ and 
$ \grad \Gamma_w(x_1^\delta)\to \grad \Gamma(x_0)$.  Since $\Gamma_w$ attains its minimum at $x_0$ we have $\grad\Gamma_w(x_0)=0$.  Also, $w(x_1^\delta+y) - w(x_1^\delta) \to w(x_0 + y ) - w(x_0)$. Hence, by the dominated convergence theorem, we have 
\begin{align*}
f(x_0) - d_{\varepsilon} &\leq g(x_0) +d_{\varepsilon} +  L\phi(x_0) - \int_{|y| \geq r}  (w(x_0 + y) -w(x_0)) j(y) \D{y}   +
\delta L ( \vert  \bullet - x_0 \vert \wedge r)^2(x_0).
\end{align*}
Since $w(x_0 + y ) - w(x_0) \geq 0$ for all $|y| \geq r$, we have
\begin{align*}
f(x_0) - d_{\varepsilon} &\leq g(x_0) +d_{\varepsilon} +  L\phi(x_0) + \delta L ( \vert  \bullet - x_0 \vert \wedge r)^2(x_0).
\end{align*}
Now, we let $\delta\to 0$ to
find that
 $$
 f(x_0) -g(x_0)  - 2 d_{\varepsilon}  \leq 
 L\phi(x_0). 
 $$
This gives us $L(u^\varepsilon-v_\varepsilon)\geq f-g -2d_\varepsilon$ in
$D_1$, in the viscosity sense. At the end, we let $\varepsilon\to 0$ and use the stability property of viscosity solution to obtain our desired result.
\end{proof}

Now applying a standard approximation argument together with \cref{L5.2}
 we obtain (cf. 
\cite[Theorem~5.9]{CS09})
\begin{theorem}\label{T5.1}
Let $D$ be an open bounded set,  $u$ and $v$ be two bounded functions such that $u$ is upper-semicontinuous and $v$ is lower-semicontinuous in $\overline{D}$ also $L u \geq f$ and $L v \leq g$ in the viscosity sense 
in $D$, for two continuous functions $f$ and $g$, then $L (u-v) \geq f-g$
in $D$ in the viscosity sense.
\end{theorem}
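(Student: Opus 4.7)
The plan is to derive \cref{T5.1} from \cref{L5.2} by sending $\varepsilon \to 0$ in the sup- and inf-convolution approximations. Fix $D_1 \Subset D$; by \cref{L5.2},
\[
L(u^\varepsilon - v_\varepsilon) \geq f - g - 2 d_\varepsilon \quad \text{in}\; D_1
\]
in the viscosity sense, with $d_\varepsilon \to 0$ as $\varepsilon \to 0$. Since $u$ is bounded and upper-semicontinuous, standard properties of the sup-convolution give that $u^\varepsilon$ is Lipschitz, semi-convex, and $u^\varepsilon \searrow u$ pointwise as $\varepsilon \to 0$; symmetrically, $v_\varepsilon \nearrow v$. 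Thus $w_\varepsilon \df u^\varepsilon - v_\varepsilon$ is continuous, $w_\varepsilon \searrow u - v$ pointwise, and $u - v$ is itself upper-semicontinuous as a difference of an usc and an lsc function.

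To verify that $L(u-v) \geq f - g$ in $D_1$, I would fix $x_0 \in D_1$ and a test function $\varphi \in \cC^2_b(x_0)$ touching $u - v$ from above at $x_0$, strictly outside $\{x_0\}$ on a closed ball $\bar B_r(x_0) \Subset D_1$; strictness can always be arranged by perturbing $\varphi \mapsto \varphi + \delta |\cdot - x_0|^2$ and letting $\delta \to 0$ at the end. Let $M_\varepsilon = \max_{\bar B_r(x_0)} (w_\varepsilon - \varphi)$, attained at $x_\varepsilon$. Since $w_\varepsilon(x_0) - \varphi(x_0) \to 0^+$, one has $\liminf_\varepsilon M_\varepsilon \geq 0$; a short compactness argument exploiting the monotone pointwise convergence $w_\varepsilon \searrow u - v$ to the upper-semicontinuous limit, together with continuity of each $w_\varepsilon$, yields $\limsup_\varepsilon M_\varepsilon \leq (u-v)(\bar x) - \varphi(\bar x)$ for any cluster point $\bar x$ of $\{x_\varepsilon\}$, and strictness of the touching forces $\bar x = x_0$ and $M_\varepsilon \to 0$ along a subsequence. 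Then $\varphi + M_\varepsilon$, modified outside $\bar B_r(x_0)$ to coincide with $w_\varepsilon$ (per the equivalent formulation of viscosity subsolution noted in the remark following the definition), is a valid global test function $\tilde\varphi_\varepsilon$ for $w_\varepsilon$ at $x_\varepsilon$, yielding
\[
L\tilde\varphi_\varepsilon(x_\varepsilon) \geq f(x_\varepsilon) - g(x_\varepsilon) - 2 d_\varepsilon.
\]

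Sending $\varepsilon \to 0$, the local piece $\Delta\tilde\varphi_\varepsilon(x_\varepsilon) = \Delta\varphi(x_\varepsilon) \to \Delta\varphi(x_0)$ by smoothness of $\varphi$, and continuity of $f, g$ handles the right-hand side. The nonlocal piece naturally splits into a part on $B_\rho(x_\varepsilon)$ for small $\rho$, where the integrand is controlled by the second-order Taylor remainder of $\varphi$ uniformly in $\varepsilon$, the bound being in $L^1(j(y)\D{y})$ by \eqref{kernel}, and a part on its complement, where the integrand involves $w_\varepsilon(x_\varepsilon + y) - \varphi(x_\varepsilon)$ and is dominated by $\norm{u}_\infty + \norm{v}_\infty + \norm{\varphi}_\infty$; dominated convergence yields convergence to the analogous quantity for $u - v$. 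This produces $L\varphi(x_0) \geq f(x_0) - g(x_0)$ in the equivalent ``replace $\varphi$ by $u - v$ outside the ball'' form of the viscosity subsolution inequality, proving $L(u-v) \geq f - g$ on $D_1$, and hence on all of $D$ since $D_1 \Subset D$ was arbitrary.

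The main obstacle I anticipate is the passage to the limit inside the nonlocal integral: since $w_\varepsilon \to u - v$ only monotonically and pointwise with no uniform control, one must supply an integrable dominating function, and this is provided precisely by boundedness of $u$ and $v$ together with the L\'{e}vy-integrability condition \eqref{kernel}. A minor technical point is the gluing of $\varphi + M_\varepsilon$ with $w_\varepsilon$ outside the small ball to produce a bona fide global test function $\tilde\varphi_\varepsilon$, which is covered by the alternative viscosity formulation already quoted in the remark following the definition of viscosity solution.
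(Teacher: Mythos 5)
Your proposal is correct and is essentially the paper's own proof: the paper deduces \cref{T5.1} from \cref{L5.2} by exactly this ``standard approximation argument'' (letting $\varepsilon\to 0$ in the sup/inf-convolutions and invoking stability of viscosity subsolutions, cf.\ \cite[Theorem~5.9]{CS09}), which you have simply written out in detail, including the standard maximum-point argument and the reverse-Fatou/domination step for the nonlocal tail using \eqref{kernel}. The only point to watch is that $u^\varepsilon\downarrow u$ and $v_\varepsilon\uparrow v$ pointwise only at points of semicontinuity (guaranteed by the hypotheses only on $\overline{D}$), but this is the same level of precision at which the paper's one-line proof and the cited Caffarelli--Silvestre argument operate.
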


Now we can state our comparison result.

\begin{theorem}\label{T5.2}
Let u be a bounded function in $\Rd$ which
is upper-semicontinuous in $\overline{D}$ and satisfies $L u \geq 0$ in $D$. 
Then $\sup_{D} u \leq  \sup_{D^c} u$ .
\end{theorem}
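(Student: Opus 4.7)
I would argue by contradiction, reducing via \cref{T5.1} to the case $\sup_{D^c} u=0$, and then apply the sup-convolution together with a classical Alexandrov--Bakelman--Pucci convex-envelope argument in the spirit of \cite[Lemma~3.5]{CC}, in direct analogy with the proof of \cref{L5.2}.

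\emph{Reduction.} The constant $v\equiv N:=\sup_{D^c}u$ solves $Lv=0$ classically, so \cref{T5.1} gives $L(u-N)\geq 0$ in $D$ in the viscosity sense and $u-N\leq 0$ on $D^c$. It thus suffices to show that any bounded, upper-semicontinuous $w$ on $\bar D$ with $Lw\geq 0$ in $D$ (viscosity) and $w\leq 0$ on $D^c$ satisfies $w\leq 0$ on $\Rd$. Suppose toward contradiction that $M:=\sup_D w>0$; by upper semicontinuity and compactness of $\bar D$, $w$ attains $M$ at some $x_\circ\in D$ (interior, since $w\leq 0$ on $\partial D\subset D^c$). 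A direct viscosity test at $x_\circ$ cannot produce a contradiction, since any admissible test function $\varphi$ lying above $w$ with $\varphi(x_\circ)=M$ yields $L\varphi(x_\circ)\geq 0$ automatically; this forces one to regularize.

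\emph{Sup-convolution and concave envelope.} Let $w^\varepsilon$ be the sup-convolution of $w$. By \cref{L5.1}, $Lw^\varepsilon\geq -d_\varepsilon$ in any $D_1\Subset D$ with $d_\varepsilon\to 0$ uniformly, and $w^\varepsilon$ is semi-convex, hence punctually twice differentiable almost everywhere by Aleksandrov's theorem. Standard estimates give $w^\varepsilon\to 0$ uniformly on $\{\dist(\cdot,\bar D)\geq C\sqrt\varepsilon\}$, while $w^\varepsilon(x_\circ)\geq w(x_\circ)=M$. Following the template of \cref{L5.2}, let $\Gamma$ be the concave envelope of $w^\varepsilon$ over a ball $B\supset\bar D$; applying \cite[Lemma~3.5]{CC} to $-w^\varepsilon$ produces a positive-measure contact set $A=\{w^\varepsilon=\Gamma\}$ on which $w^\varepsilon$ is punctually twice differentiable with $D^2w^\varepsilon\leq D^2\Gamma\leq 0$, $\nabla w^\varepsilon=\nabla\Gamma$, and $w^\varepsilon(x+y)\leq w^\varepsilon(x)+\nabla w^\varepsilon(x)\cdot y$ for $x+y\in B$. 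This tangency bound makes the nonlocal integrand non-positive for $|y|\leq 1$ (after cancellation with the compensator $\Ind_{\{|y|\leq 1\}}y\cdot\grad w^\varepsilon$), and together with \eqref{kernel} and the uniform smallness of $w^\varepsilon$ far from $\bar D$ yields $I[w^\varepsilon](x)\leq o_\varepsilon(1)$ on $A$. Combined with $Lw^\varepsilon(x)\geq -d_\varepsilon(x)$, this gives $(\Delta w^\varepsilon(x))^-\leq d_\varepsilon(x)+o_\varepsilon(1)$ on $A$. The classical ABP estimate for $\Delta$ then forces $\sup w^\varepsilon\leq C\,\|d_\varepsilon\|_{L^d(D_1)}+o_\varepsilon(1)\to 0$ as $\varepsilon\to 0$, contradicting $\sup w^\varepsilon\geq M>0$.

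\emph{Main obstacle.} The most delicate point is the uniform pointwise upper bound $I[w^\varepsilon]\leq o_\varepsilon(1)$ at contact points $x\in A$. The small-jump contribution ($|y|\leq 1$) is non-positive by tangency with the concave $\Gamma$, but the large-jump contribution requires that $w^\varepsilon(x+y)$ is small for $|y|$ large (i.e., when $x+y\notin\bar D$), while $w^\varepsilon(x)$ need not be close to $M$ at a generic contact point; careful bookkeeping on the choice of the tangency ball $B$ and the tail of $j$ is needed, and in the degenerate case that $j$ is compactly supported one has to lean entirely on the $\Delta$ part. A further technicality is that the classical ABP for $\Delta$ must be applied to the semi-convex (not $C^2$) function $w^\varepsilon$ via punctual second differentiability on $A$, and the paper's ABP maximum principle (\cref{T2.1}) cannot be invoked here since it is derived from \cref{T5.2} via \cref{T1.2}.
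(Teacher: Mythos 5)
Your reduction via \cref{T5.1} and your awareness that \cref{T2.1} cannot be invoked (circularity) are fine, but the core of your argument has a genuine gap: the claimed pointwise bound $I[w^\varepsilon](x)\leq o_\varepsilon(1)$ on the contact set with the concave envelope does not follow from tangency, and it is false in general. Tangency with the concave $\Gamma$ only controls the \emph{compensated} part: for $|y|\leq 1$ you indeed get a non-positive integrand, and for $|y|>1$ with $x+y\in B$ it gives at best $w^\varepsilon(x+y)-w^\varepsilon(x)\leq \nabla\Gamma(x)\cdot y$, whose integral against $j$ may diverge since only $\int(1\wedge|y|^2)j(y)\,\D y<\infty$. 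The honest bound for the large-jump part is $\int_{|y|>1}\bigl(w^\varepsilon(x+y)-w^\varepsilon(x)\bigr)j(y)\,\D y\leq \bigl(\sup w^\varepsilon-w^\varepsilon(x)\bigr)\int_{|y|>1}j(y)\,\D y+o_\varepsilon(1)$, and a generic contact point has $w^\varepsilon(x)$ far below $\sup w^\varepsilon\approx M$ (the ABP machinery forces you to use contact points spread over all of $D$, not just near the maximizer), so a jump from such a point into the bump region contributes a positive term of order $M$. Feeding this into your scheme only yields $(\Delta w^\varepsilon)^-\lesssim d_\varepsilon+M\int_{|y|>1}j\,\D y$ on the contact set, and then the classical ABP gives $M\leq C\,\diam(D)\,|D|^{1/d}\bigl(d_\varepsilon+M\int_{|y|>1}j\,\D y\bigr)$ — i.e.\ a narrow-domain maximum principle (cf.\ \cref{C2.1}), not the full statement $\sup_D u\leq\sup_{D^c}u$. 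Your fallback remark that for compactly supported $j$ one "leans on the $\Delta$ part" does not repair this, because the problematic jumps are precisely those landing back inside $D$, which exist for any nontrivial $j$.

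The paper's proof avoids this entirely and is much shorter: by \cite[Lemma~5.5]{Mou19} there is a non-negative $\chi\in \cC^2(\bar{D})\cap\cC_b(\Rd)$ with $L\chi\leq -1$ in $D$ in the classical sense; one sets $\phi_M=M+\varepsilon\chi$, so $L\phi_M\leq-\varepsilon$, takes the smallest $M_0$ with $\phi_{M_0}\geq u$ on $\Rd$, and observes that if $M_0>\sup_{D^c}u$ then $\phi_{M_0}$ touches $u$ from above at some $x_0\in D$, whence the viscosity subsolution property gives $L\phi_{M_0}(x_0)\geq 0$, contradicting $L\phi_{M_0}\leq-\varepsilon$; letting $\varepsilon\to0$ finishes. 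If you want to salvage your route, you would in effect need such a strict supersolution to absorb the order-$M$ large-jump term — at which point the barrier argument already gives the theorem directly, without sup-convolutions or \cite[Lemma~3.5]{CC}.
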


\begin{proof}
From \cite[Lemma 5.5]{Mou19} we can find a non-negative function 
$\chi\in \cC^2(\bar{D})\cap\cC_b(\Rd)$ satisfying
$$L \chi\leq -1\quad \text{in}\; D.$$
The above equation holds in the classical sense.
For $ \varepsilon> 0$,
we let $\phi_M$ to be
$$
\phi_M (x) = M + \varepsilon \chi.
$$
Then $L\phi_M \leq - \varepsilon$ in $D$.

Let $M_0$ be the smallest value of $M$ for which $\phi_M \geq u$ in $\Rd$. We show that
$M_0 \leq \sup_{D^c } u$.  Suppose, to the contrary, that 
$M_0 >  \sup_{D^c } u$. Then there must be a point $x_0 \in D$ for which $u(x_0) = \phi_{M_0}(x_0)$.  But in that case $\phi_{M_0}$ would touch $u$ from above at
$x_0$ and thus, by the definition of the viscosity subsolution, we would have that  $L\phi_{M_0}(x_0) \geq 0$. This is a contradiction. Therefore,  
$M_0 \leq \sup_{D^c } u$ which implies that
for every $x \in \Rd$
$$
u \leq \phi_{M_0} \leq M_0 + \varepsilon\, \sup_{\Rd}\chi \leq \sup_{D^c} u +\varepsilon\, \sup_{\Rd}\chi.
$$
The result follows by taking $\varepsilon \rightarrow 0$.
\end{proof}
Now we prove the existence of viscosity solution to \eqref{E5.1}.
\begin{lemma}\label{L5.3}
Assume \hyperlink{A1}{(A1)} and let $D$ be a bounded Lipschitz domain.
Define
$$u(x)=\Exp_x\left[\int_0^{\uptau} f(X_t)\, \D{t}\right] + \Exp_x[g(X_\uptau)],\quad x\in D,$$
where $\uptau$ denotes the first exit time of $X$ from $D$.
Then $u\in \cC_b(\Rd)$ and solves \eqref{E5.1} in the viscosity sense.
\end{lemma}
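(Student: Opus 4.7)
The plan is to verify three things in sequence: boundedness, continuity on $\Rd$, and the viscosity sub/super-solution property in $D$.

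\textbf{Boundedness and continuity.} Boundedness is immediate from \cref{L2.1}, which gives $\sup_{x\in D}\Exp_x[\uptau]\le \theta$, together with $f\in\cC(\bar D)$ and $g\in\cC_b(D^c)$, so $\|u\|_\infty \le \|f\|_\infty\,\theta + \|g\|_\infty$. For continuity, on $D^c$ there is nothing to check since $u=g$. For $x\in D$, I would first establish joint continuity of $x\mapsto(\uptau, X_\uptau)$ in $\Prob_x$-law by combining Feller continuity of the semigroup of $X$ with the fact that $\Prob_x(\uptau = \uptau-) = 1$ for $x\in D$ (the presence of the Brownian part $B$ makes the entrance into $\bar D^c$ instantaneous across any small piece of boundary the process hits). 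Then $u(x_n)\to u(x)$ by bounded convergence applied to $\int_0^\uptau f(X_t)\,dt$ and $g(X_\uptau)$. The delicate part is continuity at $\partial D$: for $x_0\in\partial D$ and $x\to x_0$ from $D$, I need $\Exp_x[\uptau]\to 0$ and $X_\uptau\to x_0$ in $\Prob_x$-law. Since $D$ is Lipschitz it satisfies a uniform exterior cone (in fact, a uniform exterior sphere) condition, so each boundary point is regular for the Brownian component $B$, and this regularity passes to $X=B+Y$ because the additional jump component $Y$ can only cause $X$ to leave $D$ sooner. This gives $u(x)\to g(x_0)$ and completes the proof that $u\in\cC_b(\Rd)$.

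\textbf{Viscosity subsolution.} Fix $x_0\in D$ and a test function $\varphi\in\cC^2_b(x_0)$ with $\varphi(x_0)=u(x_0)$ and $\varphi>u$ on $\Rd\setminus\{x_0\}$. Choose $r>0$ so small that $B_{2r}(x_0)\Subset D$ and $\varphi\in\cC^2(\overline{B_{2r}(x_0)})$; then $L\varphi$ is well-defined and continuous on $B_r(x_0)$ (the local part is classical and the nonlocal integral is finite by \eqref{kernel} and boundedness of $\varphi$). Let $\uptau_r$ be the exit time of $X$ from $B_r(x_0)$. The strong Markov property applied at $\uptau_r<\uptau$ gives
\begin{equation}\label{EL5.3-mp}
u(x_0)=\Exp_{x_0}\!\left[\int_0^{\uptau_r}\!\!f(X_t)\,dt\right]+\Exp_{x_0}[u(X_{\uptau_r})],
\end{equation}
while Dynkin's formula applied to $\varphi$ together with the fact that $\Exp_{x_0}[\uptau_r]<\infty$ (bounded by $\theta$ via \cref{L2.1}) yields
\begin{equation}\label{EL5.3-dyn}
\varphi(x_0)=\Exp_{x_0}[\varphi(X_{\uptau_r})]-\Exp_{x_0}\!\left[\int_0^{\uptau_r}\!\!L\varphi(X_t)\,dt\right].
\end{equation}
Subtracting \eqref{EL5.3-mp} from \eqref{EL5.3-dyn} and using $\varphi\ge u$ with $\varphi(x_0)=u(x_0)$ gives
$$\Exp_{x_0}\!\left[\int_0^{\uptau_r}\!\!\bigl(L\varphi(X_t)+f(X_t)\bigr)\,dt\right]\ge 0.$$
Dividing by $\Exp_{x_0}[\uptau_r]$, appealing to continuity of $L\varphi+f$ at $x_0$ together with $X_t\to x_0$ in probability as $\uptau_r\to 0$ (which happens as $r\to 0$ because $\Exp_{x_0}[\uptau_r]\to 0$, again by the diffusive part), one concludes $L\varphi(x_0)+f(x_0)\ge 0$, i.e., $L\varphi(x_0)\ge -f(x_0)$. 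The supersolution inequality is proved by the symmetric argument with $\varphi\le u$.

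\textbf{Main obstacle.} The most subtle step is the boundary continuity of $u$: one must show that for a Lipschitz $\partial D$, every boundary point is regular in the sense that $\Exp_x[\uptau]\to 0$ as $x\to x_0\in\partial D$ from $D$. This should be handled using the uniform exterior sphere condition that Lipschitz domains enjoy (and which the paper uses elsewhere, e.g.\ in \cref{L3.1}): the Brownian component quickly carries $X$ into the exterior ball, and this is quantitative enough to give a uniform modulus of continuity via a barrier function, parallel to the barrier argument in the proof of \cref{L3.1}. A lesser technical point is justifying Dynkin's formula for $\varphi$ which is only $\cC^2$ on $B_{2r}(x_0)$ but merely bounded and continuous outside; this is standard once one observes that the nonlocal contribution in $L\varphi(y)$ for $y\in B_r(x_0)$ depends on the values of $\varphi$ everywhere but remains bounded and continuous as a function of $y$ by dominated convergence and \eqref{kernel}.
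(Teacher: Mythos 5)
Your overall architecture (boundedness via \cref{L2.1}, continuity of $u$, then the strong Markov/Dynkin argument for the viscosity property) is the right one, and the Dynkin-formula part is essentially the standard argument that the paper itself dispatches by citing \cite{BL17b,Pardoux98}. The genuine gap is in the continuity of $u$, which is where the paper's proof does all of its work. First, your justification of boundary regularity --- ``this regularity passes to $X=B+Y$ because the additional jump component $Y$ can only cause $X$ to leave $D$ sooner'' --- is not a valid argument: there is no path-wise comparison between the exit times of $B$ and of $B+Y$, since a jump of $Y$ can just as well push the path back into the interior and delay the exit. The conclusion (points of a Lipschitz, hence exterior-cone, boundary are regular for $X$) is true, but it requires a real proof; the paper obtains it by citing \cite{Millar,Sztonyk}, and your monotonicity heuristic cannot replace that.

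Second, the asserted ``joint continuity of $x\mapsto(\uptau,X_\uptau)$ in law from Feller continuity plus $\Prob_x(\uptau=\uptau-)=1$'' is exactly the statement that needs proof, and the condition ``$\uptau=\uptau-$'' is not even well formulated. Under the translation coupling $X^n_t=x_n+X_t$, $X^z_t=z+X_t$ used in the paper, convergence of exit times requires $\Prob_z(\uptau_D=\uptau_{\bar D})=1$ (regularity again), and convergence of the exit \emph{positions} requires ruling out two jump scenarios that uniform convergence of paths cannot exclude: the process jumping at time $\uptau_z$ onto $\partial D$, and the left limit $X_{\uptau_z-}$ lying on $\partial D$ with the process jumping directly into $(\bar D)^c$. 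The paper eliminates precisely these events by the L\'evy system formula \eqref{EL1.4B}, together with $|\partial D|=0$ and the existence of transition densities, obtaining \eqref{EL1.4C} and \eqref{EL1.4D}; only then do the dominated convergence arguments for $\Exp_{x_n}[g(X_{\uptau_n})]$ and $\Exp_{x_n}\bigl[\int_0^{\uptau_n}f(X_t)\,\D{t}\bigr]$ go through. Your proposal never addresses these exceptional jump events (the parenthetical about the Brownian part making entrance into $\bar D^c$ instantaneous concerns regularity, not jumps landing on or straddling the boundary), so the continuity step, and with it the claim $u\in\cC_b(\Rd)$, is not established as written.
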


\begin{proof}
Since $u(x)=g(x)$ in $D^c$, we only need to show that $u\in\cC(\bar{D})$.
Let $x_n \in D$ and $x_n \to z\in \bar{D}$.  Define
$$
\uptau_n = \inf \left\lbrace t > 0 \;:\;  X_t^n = x_n + X_t \notin D \right\rbrace.
$$
Here $\uptau_n$ is the first exit time of a process starting from $x_n$.  In a similar manner one can define the first exit time $\uptau_z$ of a process starting from $z$ as
$$
\uptau_z = \inf \left\lbrace t > 0\; :\;  X_t^z = z + X_t \notin D \right\rbrace.
$$
%$X_t^n = x_n + X_t $
First suppose that $z\in \partial D$. Since $D$ is Lipschitz, it satisfies
the exterior cone condition and hence regular with respect to $X$ 
\cite{Millar, Sztonyk}. This means $P_z(\uptau_{\bar D}=0)=1$. Therefore,
for every $\delta>0$, $X^z$ intersects $(\bar{D})^c$ before time $\delta$,
almost surely. Since 
\begin{equation}\label{EL1.4A}
\sup_{t \in [0,M]} \vert x_n + X_t - (z + X_t) \vert \leq \vert x_n - z \vert \to 0, \quad \text{as} \quad n \to \infty,
\end{equation}
for every fixed $M$, it implies that $\uptau_n\to 0$
and $X^n_{\uptau_n}\to z$, almost surely. Therefore, using 
\cref{L2.1} and dominated convergence theorem, it follows that
$u(x_n)\to g(z)$ as $n\to\infty$.

For the remaining part we assume that $z\in D$.
For a fixed $M > 0$, next we show that $\uptau_n \wedge M \to \uptau_z \wedge M$ almost surely. Denote by $\Omega_M$ the event in \eqref{EL1.4A}.
Then $\Prob(\Omega_M)=1$. 
Since $D$ is regular we have $P(\uptau_z = \overline{\uptau}_z) = 1$, where $\overline{\uptau}_z$ is the first exit time of a process
from $\overline{D}$ starting at $z$  . Denote by $\tilde\Omega=\{ \uptau_z =  \overline{\uptau}_z \} $.  Let $\varepsilon > 0$.  We claim that,
on $\Omega_M\cap\tilde\Omega$, $\uptau_n \wedge M \leq \uptau_z \wedge M + \varepsilon$ for all large $n$. Also, we only need to show it on
$\{\uptau_z<M\}$.
For $\omega \in \Omega_M\cap\tilde\Omega \cap \left\lbrace \uptau_z < M \right\rbrace$, there exists $s \in [\uptau_z(\omega),  \uptau_z(\omega) + \frac{\varepsilon}{2}]$ such that $X^z_s(\omega) \in \overline{D}^c$
 which implies 
 $$\dist(X^z_s(\omega) , \overline{D})  >0.$$
By \eqref{EL1.4A}, it then implies that 
 $$
\uptau_n(\omega) \leq s \leq \uptau_z(\omega) + \frac{\epsilon}{2},
 $$
for all large $n$.  This proves the claim.

Next we show that, on $\tilde\Omega\cap\Omega_M $, we have  $\uptau_z \wedge M - \varepsilon \leq \uptau_n \wedge M$ for all large $n$. Now since $\uptau_z(\omega) \wedge M - \varepsilon < \uptau_z(\omega)$, for all $s \in [0 , \uptau_z(\omega) \wedge M -\varepsilon]$ we have $X_s^z(\omega) \in D^{o}$. Applying \eqref{EL1.4A} we get
$X^n_s\in D^{o}$ for all $s \in [0 , \uptau_z(\omega) \wedge M - \varepsilon]$
and for all large $n$.
Thus $\uptau_z \wedge M - \varepsilon \leq \uptau_n \wedge M$ for all large $n$. Thus for every $\varepsilon>0$ and $\omega\in \Omega_M\cap\tilde\Omega$, we have $N(\omega)$ satisfying
$$
\uptau_z(\omega) \wedge M - \varepsilon \leq \uptau_n(\omega) \wedge M \leq \uptau_z(\omega) \wedge M + \varepsilon
$$
for all $n\geq N(\omega)$.   Hence we proved 
$M\wedge\uptau_n \to M\wedge\uptau_z$ pointwise in
$\omega\in \Omega_M\cap\tilde\Omega$, as $n \to \infty$. Since $M$ is arbitrary, this also implies 
$\uptau_n \to \uptau_z$ almost surely.

Next we want to show that exit location converges, that is, 
$ X^n_{\uptau_n} \to X^z_{\uptau_z}$ as $n \to \infty$, almost surely.
We recall 
the L\'evy system formula ( cf. \cite[p.~65]{BSW}),

\begin{equation}\label{EL1.4B}
\Exp_x \left[\sum_{0<s \leq t} f(X_{s{-}} , X_s) \Ind_{\{ X_{s{-}} \neq X_s \}} \right] = \Exp_x \left[\int_0^t \int_{\Rd} f(X_{s} , y) 
j(y-X_s)\D{y}\, \D{s} \right]
\end{equation}
which holds for all non-negative $f \in \cB_b(\Rd \times \Rd)$.  Now we put $f_{\varepsilon} (x,y) = \Ind_{\{ x\in D\}} 
\Ind_{\{ \vert x-y \vert > \varepsilon \}}  \Ind_{\{ y \in \partial D\}}$,  in the above formula and using the fact $\abs{\partial D}=0$ we obtain
$$
\Exp_z \left[\sum_{0<s \leq t} f_{\varepsilon}(X_{s_{-}} , X_s) \Ind_{\left\lbrace X_{s{-}} \neq X_s \right\rbrace } \right] = 0,
\quad \text{for all}\; \varepsilon>0.
$$
Since $\varepsilon$ and $t$ are arbitrary, this gives us
\begin{equation}\label{EL1.4C}
\Prob_z (\left\lbrace  X_{\uptau_z} \in \partial D ,  X_{\uptau_z-} \neq X_{\uptau_z} \right\rbrace) =0.
\end{equation}

Again, choosing 
$$\hat{f}_{\varepsilon}(x,y) = \Ind_{\{ x \in \partial D \} } 
\Ind_{\{ y \in D^{+}_{\varepsilon} \} }, \quad \text{for}\quad
D^{+}_{\varepsilon} = \{ y : \dist(y, D) > \varepsilon \}
$$ 
in \eqref{EL1.4B} and since $X_s$ has transition density (see
\cref{L2.2}), it follows
that
$$
\Exp_z \left[\sum_{0<s \leq t} \hat{f}_{\varepsilon}(X_{s{-}} , X_s) \Ind_{\left\lbrace x_{s^{-}} \neq X_s \right\rbrace     } \right] = 0.
$$
Since $\varepsilon, t$ are arbitrary, we get
\begin{equation}\label{EL1.4D}
\Prob_z (\{  X_{\uptau_z^{-}} \in \partial D ,  X_{\uptau_z} \in (\bar{D})^c \}) =0.
\end{equation}
We claim that for any $M>0$ we have
$$
X^n_{\uptau_n \wedge M} \to X^z_{\uptau_z \wedge M} \quad \text{as} \quad n \to \infty,\; \text{almost surely}.
$$
We will be interested in the case where $\uptau_z < M$ since,  
given $t = M$, function $t\mapsto X_t$ is almost surely continuous at 
$t = M$.  Now fix $\omega$ so that it is in the complement of
the events in \eqref{EL1.4C} and \eqref{EL1.4D}.  Since process $X_t$ is C\`{a}dl\`{a}g
(right continuous with a finite left limit) and $\uptau_n \wedge M \to \uptau_z \wedge M$ almost surely,  
we only  need to consider a situation where $\uptau_n \wedge M \nearrow \uptau_z \wedge M$. On set $\{ \uptau_z < M \}$, we have $\uptau_n  \nearrow \uptau_z $.

If $t \to X^z_t(\omega)$ is continuous at $\uptau_z(\omega) = t$, 
then we have the claim.  So we let $X^z_{\uptau_z{-}}(\omega) \neq X^z_{\uptau_z}(\omega)$.  Since $\omega$ is in the complement of the
events in
$\eqref{EL1.4C}$ and $\eqref{EL1.4D}$, we have $X^z_{\uptau_z{-}}(\omega) \in D^{o} $ and $ X^z_{\uptau_z}(\omega) \in \overline{D}^c$. But $x_n \to z$ and $X^z\mid_{[0,\uptau_z^{-}]} (\omega) $ is in $D^{\circ}$, then $X^n\mid_{[0,\uptau_n]} (\omega)$ is in $D^{\circ}$ for large $n$, contradicting the fact that $X^n_{\uptau_n} (\omega) \in D^c$. So this case can not happen and we get that 
$X^n_{\uptau_n \wedge M} \to X^z_{\uptau_z \wedge M}$ as $n \to \infty$, almost surely. Since $M$ is arbitrary and 
$\uptau_n\to \uptau_z$, it gives us
\begin{equation}\label{EL1.4E}
X^n_{\uptau_n} \to X^z_{\uptau_z} \quad \text{as} \quad n \to \infty,\; \text{almost surely}.
\end{equation}

Now we are ready to show that $u(x_n)\to u(z)$,
Applying dominated convergence theorem and using 
\eqref{EL1.4E} we get
$$\Exp_{x_n} [g(X_{\uptau})] \to \Exp_z[g(X_{\uptau})] .$$ 
Since $\uptau_n \wedge M \to \uptau_z \wedge M $  almost surely and for all $\omega$,  $\sup_{t \in [0,M]} \vert X^n_t - X^z_t \vert  \to 0$ (see \eqref{EL1.4A}), we have 
$$
\int_0^{\uptau_n \wedge M} f(X^n_s) \D{s}   \to  \int_0^{\uptau_z \wedge M} f(X^z_s) \D{s},   
$$
almost surely.  Hence by dominated convergence we have 

$$
\Exp \left[ \int_0^{\uptau_n \wedge M} f(X^n_s) \D{s} \right]   \to \Exp \left[ \int_0^{\uptau_z \wedge M} f(X^z_s) \D{s}    \right] .
$$
Now since $M$ is arbitrary, using \cref{L2.1}, it follows that
$$
\Exp \left[ \int_0^{\uptau_n} f(X^n_s) \D{s}   \right]   \to \Exp \left[ \int_0^{\uptau_z} f(X^z_s) \D{s} \right] .
$$
This completes the proof.

It is standard to show that $u$ is a viscosity solution 
(cf. \cite[Remark~3.2]{BL17b} \cite[Theorem~2.2]{Pardoux98}).
\end{proof}

Now we can complete the proof of \cref{T1.1}.
\begin{proof}[Proof of \cref{T1.1}]
Proof follows from \cref{T5.1,T5.2,L5.3}.
\end{proof}

%%%%%%%%%%%%%%%%%%%%%%%%%%%%%%%%%%%%%%%%%%%%%%%%%%%%%%%%%%%%%%%%%%%%%%%%%
\subsection*{Acknowledgments}
The authors are indebted to the reviewer for his/her careful reading of the paper and suggestions for improving the presentation.
This research of Anup Biswas was supported in part by 
DST-SERB grant MTR/2018/000028 and a Swarnajayanti fellowship. Mitesh Modasiya is partially supported by CSIR PhD fellowship (File no.
09/936(0200)/2018-EMR-I).

\end{document}